\newtheorem{thm}{Theorem}[section]
\newtheorem{prop}[thm]{Proposition}
\newtheorem{lem}[thm]{Lemma}
\newtheorem{coro}[thm]{Corollary}
\newtheorem{defn}[thm]{Definition}
\newtheorem{expl}[thm]{Example}
\newtheorem{rmk}[thm]{Remark} 
\newtheorem{conj}[thm]{Conjecture}
\newtheorem{nota}[thm]{Notation}
\numberwithin{equation}{section}
\newcommand{\lra}{\longrightarrow}
\newcommand{\mcalk}{\mathcal{K}}
\newcommand{\mcalo}{\mathcal{O}}
\newcommand{\mcalc}{\mathcal{C}}
\newcommand{\om}{\mathcal{O}(M)}
\newcommand{\mcala}{\mathcal{A}}
\newcommand{\mcalf}{\mathcal{F}}
\newcommand{\mcald}{\mathcal{D}}
\newcommand{\mcalt}{\mathcal{T}}
\newcommand{\mcalm}{\mathcal{M}}
\newcommand{\mcalb}{\mathcal{B}}
\newcommand{\mcall}{\mathcal{L}}
\newcommand{\mcalu}{\mathcal{U}}
\newcommand{\mcalr}{\mathcal{R}}
\newcommand{\tm}{\mathcal{T}^M}
\newcommand{\utm}{\mathcal{U}(\mathcal{T}^M)}
\newcommand{\butm}{\mathcal{B}_{\mathcal{U}(\mathcal{T}^M)}}
\newcommand{\phib}{\overline{\phi}}
\newcommand{\hra}{\hookrightarrow}
\newcommand{\lla}{\longleftarrow}
\newcommand{\ok}{\mathcal{O}_k}
\newcommand{\SsS}{\text{SsS}}
\newcommand{\mcale}{\mathcal{E}}
\newcommand{\ra}{\rightarrow}
\newcommand{\ahd}{\widehat{A}_{\bullet}}
\newcommand{\ah}{\widehat{A}}
\newcommand{\tmd}{\mathcal{T}_{\bullet}^{M}}
\newcommand{\dt}{\widetilde{\Delta}}
\newcommand{\dtd}{\widetilde{\Delta}^{\bullet}}
\newcommand{\dtn}{\widetilde{\Delta}^n}
\newcommand{\pai}{\partial^i}
\newcommand{\paj}{\partial^j}
\newcommand{\vzn}{\langle v_0, \cdots, v_n\rangle}
\newcommand{\vzin}{\langle v_0, \cdots, \widehat{v}_i, \cdots, v_n\rangle}
\newcommand{\fs}{f_{\sigma}}
\newcommand{\udo}{\mcalu(\Delta^1)}
\newcommand{\dn}{\Delta^n}
\newcommand{\ca}{\mcalc_A}
\newcommand{\fuca}{\mcalf(\utm; \ca)}
\newcommand{\mcalh}{\mathcal{H}}
\newcommand{\pdtn}{\partial \dtn}
\newcommand{\gb}{\overline{G}}
\newcommand{\pb}{\overline{p}}
\newcommand{\beb}{\overline{\beta}}
\newcommand{\mcalw}{\mathcal{W}}
\newcommand{\mcals}{\mathcal{S}}
\newcommand{\utmi}{\mcalu(\mcalt^{M_i})}
\newcommand{\utmii}{\mcalu(\mcalt^{M_{i-1}})}
\newcommand{\utmj}{\mcalu(\mcalt^{M_j})}
\newcommand{\utms}{\mcalu(\mcalt^{M_s})}
\newcommand{\utmr}{\mcalu(\mcalt^{M_r})}
\newcommand{\fib}{\overline{F}_i}
\newcommand{\fjb}{\overline{F}_j}
\newcommand{\nbb}{\mathbb{N}}
\newcommand{\ebb}{\mathbb{E}}
\newcommand{\la}{\leftarrow}
\newcommand{\fkam}{\mcalf_{kA} (\om; \mcalm)}
\newcommand{\foam}{\mcalf_{1A} (\mcalo(F_k(M)); \mcalm)}
\newcommand{\utfm}{\mcalu (\mcalt^{F_k(M)})}
\newcommand{\lnk}{\widetilde{\Lambda}^n_k}
\newcommand{\ahn}{\widehat{A}_n}
\newcommand{\pa}{\partial}
\newcommand{\fautm}{\mcalf_A(\utm; \mcalm)}
\newcommand{\futma}{\mcalf(\utm; \ca)}
\newcommand{\fabutm}{\mcalf_A(\butm; \mcalm)}
\newcommand{\mcalrb}{\overline{\mcalr}}
\newcommand{\lamb}{\overline{\Lambda}}
\newcommand{\ttb}{\overline{\Theta}}
\newcommand{\htah}{\underset{\text{sSet}}{\text{Hom}}(\tmd, \ahd)}
\newcommand{\shtah}{\underset{\SsS}{\text{Hom}}(\tm, \ahd)}
\newcommand{\shtoh}{\underset{\SsS}{\text{Hom}}(\tm, \ahd')}
\newcommand{\mcaln}{\mathcal{N}}
\newcommand{\wequ}{\simeq}
\newcommand{\haut}{\text{haut}}
\newcommand{\Nt}{\widetilde{N}}
\title{ \textbf{Classification of homogeneous functors in manifold calculus}}
\date{}
\author{Paul Arnaud Songhafouo Tsopm\'en\'e \\
Donald Stanley}
\begin{document}
\maketitle

\begin{abstract}
	For any object $A$ in a simplicial model category $\mathcal{M}$, we construct a topological space $\hat{A}$ which classifies homogeneous functors whose value on $k$ open balls is equivalent to $A$. This extends a classification result of Weiss for homogeneous functors into topological spaces.  
\end{abstract}

\tableofcontents


\section{Introduction}

Let $M$ be a manifold and let $\om$ be the poset of open subsets of $M$. In order to study the space of smooth embeddings of $M$ inside another manifold,  Goodwillie and Weiss \cite{good_weiss99, wei99} introduced the theory of manifold calculus, which is one incarnation of calculus of functors. One can define manifold calculus as the  study of contravariant functors from $\om$ to Top, the category of spaces. Being a calculus of functors, its philosophy is to take a  functor $F$  and replace it by its Taylor tower $\{T_k(F) \lra T_{k-1}(F)\}_{k \geq 1}$, which converges to the original functor in good cases, very much like  the approximation of a function by its Taylor series. The functor $T_kF$ is the \textit{polynomial approximation} to $F$ of degree $\leq k$. The \lq\lq difference\rq\rq{} between $T_kF$ and $T_{k-1}F$, or more precisely the homotopy fiber of the canonical map $T_kF \lra T_{k-1}F$,  belongs to a class of objects called \textit{homogeneous functors} of degree $k$. 
In \cite[Theorem 8.5]{wei99}, Weiss proves a deep result about the classification of  homogeneous functors of degree $k$. Specifically, he shows that any such functor is equivalent to a functor constructed out of a fibration $p \colon Z \to F_k(M)$ over the unordered configuration space of $k$ points in $M$, with a preferred section (germ) near the fat diagonal of $M$.

In this paper we classify homogeneous functors  of degree $k$ from $\om$ into any simplicial model category $\mcalm$. Such functors are determined by their values on disjoint unions of $k$ balls \cite[Lemma 6.5]{paul_don17-2}. Let $\mcalf_{kA} (\om; \mcalm)$ denote the category of {\it homogeneous functors} $F \colon \om \lra \mcalm$ (see Definition~\ref{defn:homogeneous_functor}) of degree $k$ such that $F(U) \simeq A$ for any $U$ diffeomorphic to the disjoint union of exactly $k$ open balls. 
Let $\mcalf_{kA} (\om; \mcalm)\slash \textgoth{w}$ denote the collection of weak equivalence classes of such functors. For spaces $X$ and $Y$, we let $[X, Y]$ be the standard notation for the set of homotopy classes of  maps from $X$ to $Y$.   We classify objects of $\mcalf_{kA} (\om; \mcalm)$ not through fibrations, but instead by maps from $F_k(M)$ to a certain topological space.  Specifically, we have the following, which is the main result of this paper.

\sloppy

\begin{thm}  \label{main_thm}
	Let $\mcalm$ be a simplicial model category, 
	and let $A \in \mcalm$. Then there is a topological space $\ah$ such that 
	for any  manifold $M$,
	\begin{enumerate}
		\item[(i)] if $k = 1$, there is a bijection
		\[
		\mcalf_{1A} (\om; \mcalm)\slash \textgoth{w} \ \  \cong \ \ \left[M, \ah\right].
		\]
		\item[(ii)] If $k \geq 2$ and $\mcalm$ has a zero object, there is a bijection
		\[
		\mcalf_{kA} (\om; \mcalm)\slash \textgoth{w} \ \  \cong \ \ \left[F_k(M), \ah\right]. 
		\]
	\end{enumerate}
\end{thm}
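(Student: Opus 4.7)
The plan is to construct the classifying space $\ah$ explicitly, establish the bijection in the $k=1$ case directly, and then reduce the case $k \geq 2$ to $k=1$ applied to the configuration space $F_k(M)$.

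First I would construct $\ah$. Since every $F \in \mcalf_{1A}(\om; \mcalm)$ satisfies $F(U) \simeq A$ on any ball $U$, and inclusions of balls are sent to weak equivalences, such a functor is morally a ``bundle of objects weakly equivalent to $A$'' over $M$. Accordingly, $\ah$ should play the role of the classifying space $B\haut(A)$ adapted to the simplicial model category setting. I would define $\ah$ as the geometric realization of a simplicial space $\ah_{\bullet}$ whose $n$-simplices encode coherent $n$-tuples of zigzags of weak equivalences with source $A$, using the simplicial enrichment of $\mcalm$.

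Next, for $k=1$, I would associate to each $F$ a classifying map $\phi_F \colon M \to \ah$. By \cite[Lemma 6.5]{paul_don17-2} the functor $F$ is determined by its restriction to the subposet of open balls, and this restriction, together with the weak equivalences $F(U) \simeq A$, defines a bundle whose fiber over $x \in M$ is equivalent to the colimit of $F(U)$ over balls $U \ni x$. Assembling these coherently yields $\phi_F$. Conversely, from $\phi \colon M \to \ah$ one would build a functor $F_{\phi}$ on $\om$ by pulling back the universal data along $\phi$ and extending from balls to arbitrary opens via homotopy limit over the subposet of inscribed balls. One then checks that the two assignments descend to weak equivalence classes and are mutually inverse up to homotopy. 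For $k \geq 2$, I would reduce to the $k=1$ case over the manifold $F_k(M)$, using the principle already visible in Weiss's Theorem 8.5 and developed in the authors' framework, that a degree $k$ homogeneous functor on $\om$ is essentially the same as a linear functor on $\mcalo(F_k(M))$ whose restriction near the fat diagonal is equivalent to the zero object. The zero object hypothesis in (ii) is precisely what encodes this section condition, after which (i) applied to $F_k(M)$ gives the classification.

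The main obstacle will be the rigorous bijectivity in the $k=1$ case. Checking that $F \mapsto \phi_F$ descends to weak equivalence classes is relatively routine, but showing that $\phi \mapsto F_{\phi}$ produces a functor with the correct value on balls, and that $F_{\phi_F} \simeq F$, requires a careful descent argument over good covers of $M$ by balls, carried out in the simplicial model category $\mcalm$ rather than in spaces. A secondary technicality is ensuring that the construction of $\ah$ is genuinely invariant under weak equivalences $A \simeq A'$, so that $\ah$ is a well-defined invariant of the homotopy type of $A$; this will depend on a functorial cofibrant/fibrant replacement and on the naturality of the simplicial enrichment.
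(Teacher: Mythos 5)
Your reduction of (ii) to (i) over $F_k(M)$ is indeed how the paper proceeds (via \cite[Theorem 1.3]{paul_don17-2}, which is where the zero-object hypothesis enters), but the core of your $k=1$ argument has a genuine gap: the classifying space you propose is essentially $B\haut A$ built from the simplicial enrichment, together with a classifying map obtained by ``assembling'' fibers and an inverse obtained by ``pulling back the universal data along $\phi$.'' In an arbitrary simplicial model category there is no universal fibration over $B\haut A$ to pull back, and no bundle-theoretic descent available; this is precisely the approach the authors attempted and could not complete --- the statement that $\ah \simeq B\haut A$ and the resulting classification via $B\haut A$ are left in the paper as Conjectures~\ref{conj:ah_BhautA} and~\ref{conj:thm}, and the attempted direct argument in Section~\ref{comparison_section} stalls on showing two comparison maps are bijections. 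So your proposal takes as its engine exactly the step that is not known to work outside $\mcalm = \text{Top}$ (or $\text{Top}_*$).

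What the paper actually does is quite different and is where all the work lies. It fixes a triangulation $\tm$ of $M$ and replaces $\mcalf_{1A}(\om;\mcalm)$ by functors on the finite-type poset $\utm$; it constructs by hand a \emph{small} category $\ca$ of fibrant-cofibrant objects weakly equivalent to $A$ (smallness is needed so that simplices form a set), and defines $\ahd$ with $n$-simplices the fibrant $\dtn$-diagrams in $\ca$ whose morphisms are all weak equivalences, with nonobvious degeneracies built from a specific fibrant replacement functor $\mcalr$; it proves $\ahd$ is a Kan complex (without which homotopy classes of maps into it are not even an equivalence relation); it proves a localization equivalence $\fautm[W^{-1}] \simeq \futma[W^{-1}]$, where essential surjectivity is the delicate point; and it constructs explicit mutually inverse maps $\lamb$ and $\ttb$ between $\futma/we$ and $[\tmd,\ahd]$, with the well-definedness of $\ttb$ on homotopy classes requiring a prism-subdivision argument for finite triangulations and a right-Kan-extension/holim argument for infinite ones. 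None of these steps is supplied by ``descent over good covers'' or by naturality of the enrichment, and your construction of $\ah$ as coherent zigzags with source $A$ would additionally need a proof of invariance and of representability that is exactly the open content of the conjectures. To repair the proposal you would either have to prove those conjectures or switch to a concrete combinatorial model of the classifying space along the paper's lines.
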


One may ask the following natural questions. 
\begin{enumerate}
	\item How is $\ah$ constructed? 
	\item What do we know about $\ah$? 
	\item How is our classification related to that of Weiss?
\end{enumerate}

To answer the first question,  let $\dtn, n \geq 0$, denote the poset whose objects are nonempty subsets of $\{0, \cdots, n\}$, and whose morphisms are inclusions. We construct $\ca \subseteq \mcalm$, a small subcategory consisting of a certain collection of fibrant-cofibrant  objects of $\mcalm$ that are weakly equivalent to $A$. Next we define $\ahd$ as the simplicial set whose $n$-simplices are contravariant functors $\dtn \lra \ca$ that take all maps to weak equivalences and are fibrant with respect to the injective model structure on $\mcalm^{\dtn}$.  Face maps are defined in the standard way, while degeneracies are more intricate (see Section~\ref {ahd_subsection}). Fibrancy of the diagrams will allow us to show that $\ahd$ is a Kan complex. We define $\ah$ as the geometric realization of $\ahd$.

For the second question, we do not know that much about  $\ah$. By definition it is connected, and we believe its  fundamental group is the  group  of (derived) homotopy automorphisms of $A$. Further computations seem hard. 

Regarding the third question, let us consider Weiss' result as mentioned above. In addition to this,  he proves  that the fiber $p^{-1}(S)$ of the fibration $p \colon Z \to F_k(M)$ that classifies a homogeneous functor $E \colon \om \to \text{Top}$ of degree $k$ is homotopy equivalent to $E(U_S)$, where $U_S$ is a tubular neighborhood of $S$ so that $U_S$ is diffeomorphic to a disjoint union of $k$ open balls \cite{wei99}. So the classification of objects of $\mcalf_{kA}(\om; \text{Top})$ amounts to the classification of fibrations over $F_k(M)$ with a section near the fat diagonal and whose fiber is $A$. In the case $k=1$, the fat diagonal is empty and we are just looking at fibrations over $M$ whose fiber is $A$. It is well known that there is a classifying space for such fibrations, namely $B\haut A$ where $\haut A$ denotes the topological/simplicial monoid of (derived) homotopy automorphisms of $A$. If $k > 1$ and $\mcalm = \text{Top}_*$, the category of pointed spaces, one has a similar classifying space for the objects of $\mcalf_{kA}(\om; \text{Top}_*)$. For a general simplicial model category $\mcalm$ we believe our classifying space, $\ah$,  is homotopy equivalent to $B\haut A$,  but we do not know how to prove this. We also believe there should be another approach (which does not involve $\ah$) to try to show that $\mcalf_{kA} (\om; \mcalm)\slash \textgoth{w} $ is in one-to-one correspondence with homotopy classes of maps $F_k(M) \to B\haut A$. We will say more about all this in Section~\ref{comparison_section}. 

One may use Theorem~\ref{main_thm} to set up the concept of \textit{characteristic classes} or \textit{invariants} of homogeneous functors though we do not know whether $\ah$ is homotopy equivalent to $B\haut A$. Let $F \in \fkam$ and let $f \colon F_k(M) \lra \ah$ denote the classifying map of $F$. One can define the \textit{characteristic classes} or \textit{invariants} of $F$ as the cohomology classes $f^*(H^*(\ah)) \subseteq H^*(F_k(M))$. If two functors are weakly equivalent, then by Theorem~\ref{main_thm} the corresponding maps are homotopic and therefore they are equal in cohomology. It would be interesting to see what kind of characteristic classes one could recover using our approach, or if other more traditional classifying spaces can be seen as special cases of our construction.

\textbf{Strategy of the proof of Theorem~\ref{main_thm}.} To prove the first part, we  need four intermediate results. 

Let $\tm$ be a triangulation of $M$, that is, a simplicial complex homeomorphic to $M$ together with a homeomorphism $\tm \lra M$. There is no need for the \textit{link condition} (which says that the link of any simplex is a piecewise-linear sphere). Associated with $\tm$ is the poset $\utm \subseteq \om$, which was introduced in \cite[Section 4.1]{paul_don17} and recalled in Definition~\ref{utm_defn}. That poset is one of the key objects of this paper as it enables us to connect many categories. 
Associated with $\utm$ is the poset $\butm \subseteq \om$ defined as 
\[
\butm = \{\text{$B$ diffeomorphic to an open ball such that $B \subseteq U_{\sigma}$ for some $\sigma$} \}.
\]
It turns out that $\butm$  is a basis for the topology of $M$. For a subposet $\mcals \subseteq \om$,  we denote by $\mcalf_A(\mcals; \mcalm)$ the category of isotopy functors $F \colon \mcals \lra \mcalm$ such that $F(U)$ is weakly equivalent to $A$ for any $U$ diffeomorphic to an open ball.  The first result we need is Lemma 6.5 from  \cite{paul_don17-2} which says that the categories $\mcalf_{1A} (\mcalo (M); \mcalm)$ and $\mcalf_A(\butm; \mcalm)$ are weakly equivalent (in the sense of \cite[Definition 6.3]{paul_don17-2}), that is, 
\begin{eqnarray} \label{res_eqn2}
\mcalf_{1A} (\mcalo (M); \mcalm) \wequ \mcalf_A(\butm; \mcalm). 
\end{eqnarray}
Looking closer at the proof of Lemma 6.5 from \cite{paul_don17-2}, one can see that the hypothesis that $\mcalm$ has a zero object is not needed when $k=1$.

Since  $\utm$ is a very good cover (in the sense of \cite[Definition 4.1]{paul_don17}) of $M$, we have the following which can be proved along the lines of \cite[Proposition 4.7]{paul_don17}.  
\begin{eqnarray}  \label{res_eqn3}
\mcalf_A(\butm; \mcalm) \wequ \fautm. 
\end{eqnarray}
(See also Proposition~\ref{fum_butm_prop}.) As we can see, (\ref{res_eqn2}) and (\ref{res_eqn3}) are just \lq\lq local versions\rq\rq{} of some results from \cite{paul_don17, paul_don17-2}. The following two technical results are new and proved using model category techniques. To state them, let $\ca$ as above. Let $\futma$ denote the category of isotopy functors from $\utm$ to $\ca$. 
By definition, there is an obvious functor  $\phi \colon \futma \hra \fautm$. Though we do not define a functor in the other direction, we succeed to prove that the localization of $\phi$ is an equivalence of categories. That is,
\begin{eqnarray} \label{res_eqn4}
\fautm [W^{-1}] \simeq  \futma [W^{-1}].
\end{eqnarray}
(See Proposition~\ref{fuca_prop}.) To get (\ref{res_eqn4}), we show that the localization of  $\phi$  is essentially surjective and fully faithful, the essentially surjectivity being the most difficult part.  The final result we need is stated as follows. Let $\ahd$ as above. One can associate to $\tm$ a 
canonical simplicial set denoted $\tmd$. We have the bijection  
\begin{eqnarray} \label{res_eqn5}
\futma \slash \textgoth{w} \  \  \cong  \  \ \left[\tmd, \ahd \right].
\end{eqnarray}
(See Proposition~\ref{fuca_iso_prop}.) To get (\ref{res_eqn5}) we construct explicit maps between the sets involved. The hardest part is to show that those maps are well defined. Defining $\ah$ as above, and noticing that the geometric realization of $\tmd$ is $M$, one deduces Theorem~\ref{main_thm}~-(i) from (\ref{res_eqn2})-(\ref{res_eqn5}). 

The second part of Theorem~\ref{main_thm} is an immediate consequence of the first part and the following weak equivalence, 
which is \cite[Theorem 1.3]{paul_don17-2}. 
\begin{eqnarray}   \label{res_eqn1}
\fkam \wequ \mcalf_{1A} (\mcalo (F_k(M)); \mcalm). 
\end{eqnarray}

Theorem~\ref{main_thm} has many hypotheses including the following:   $\mcalm$ is a \underline{simplicial} model category and \underline{$\mcalm$ has a zero object}. Note that the two underlined terms  are not needed to prove (\ref{res_eqn3})-(\ref{res_eqn5}).

\textbf{Outline}  \ The plan of the paper is as follows (see also the Table of Contents at the beginning
of the paper).  In Section~\ref{dtn_section}   we prove basic results we will use later. 
Section~\ref{ahd_section}  defines the 
simplicial set $\ahd$ and proves Proposition~\ref{ahd_prop}, 
which says that $\ahd$ is a Kan complex. 
First we construct a small category $\ca \subseteq \mcalm$ 
out of a model category $\mcalm$ and an object $A \in \mcalm$. 
Next we construct a specific fibrant replacement functor $\mcalr \colon \ca^{\dtn} \lra \ca^{\dtn}$,  which is essentially used to define degeneracy maps of $\ahd$. In Section~\ref{fautm_futma_section} we prove Proposition~\ref{fuca_prop} or (\ref{res_eqn4}). In Sections~\ref{lambda_section}, \ref{theta_section} we prove Proposition~\ref{fuca_iso_prop} or (\ref{res_eqn5}). Section~\ref{pmr_section} is dedicated to the proof of the main result of this paper: Theorem~\ref{main_thm}. Finally, in Section~\ref{comparison_section}, we state a conjecture saying how our classification is related to that of Weiss.

\textbf{Conventions and notation} \ These will be as in  \cite[Section 2]{paul_don17}, with the following additions. Throughout this paper the letter $M$ stands for a second-countable smooth manifold. The only place  we need  $M$ to be  second-countable is Section~\ref{infinite_case_subsection}.  We write $\mcalm$  for a  \emph{model category} \cite[Definition 1.1.4]{hovey99}, while $A$ is a fibrant-cofibrant object of $\mcalm$. As part of the definition, the factorizations in $\mcalm$ are functorial. Wherever necessary,  additional conditions on $\mcalm$ will be imposed. 
We write $[n]$ for the set $\{0, \cdots, n\}$,  $[n]_i$ for $[n] \backslash \{i\}$, and $[n]_{ij}$ for the set $[n] \backslash \{i, j\}$. Also we let $\{a_0, \cdots, \widehat{a}_i, \cdots, a_s\}$ denote the set $\{a_0, \cdots, a_s\} \backslash \{a_i\}$.  If $\mcals$ is a small category and $\mcalc$ is a subcategory of $\mcalm$, we write $\mcalc^{\mcals}$ for the category of contravariant functors  from $\mcals$ to $\mcalc$. An object of that category is called  \textit{$\mcals$-diagram}  or just \textit{diagram} in $\mcalc$. As usual,  weak equivalences in $\mcalc^{\mcals}$ are natural transformations which are objectwise weak equivalences.

A {\em weak equivalence diagram} is one where every morphisms is a weak equivalence.

\textbf{Acknowledgments} \ This work has been supported by  Pacific Institute for the Mathematical Sciences (PIMS), the University of Regina and the Natural Sciences and Engineering Research Council of Canada (NSERC), that the authors acknowledge. The second author worked on this paper while a visitor at  PIMS, the Fields Institute and Taida Institute for Mathematical Sciences (TIMS) and is grateful for their hospitality. 
We would like to thank Jim Davis for helping us with PL topology, and also the referee for their careful and thoughtful report which helped us improve the paper immensely.

\section{Basic results and recollections}  \label{dtn_section}

As we said in the introduction, this section presents some basic results we will use later. It is organized as follows.  In Section~\ref{dtn_subsection} 
we define the posets $\dtn$, $\pai \dtn$, and $\pdtn$, and endow the collection $\{\dt^n\}_{n \geq 0}$  with a natural  cosimplicial structure. 
In Section~\ref{model_category_subsection} we endow the category of $\dtn$-diagrams with the injective model structure and discuss some basic results about fibrant diagrams.
In Section~\ref{local_category_subsection}, we recall some classical facts about the localization of categories.

\subsection{Simplicial complexes, posets, simplicial and semi-simplicial sets}   \label{dtn_subsection}

Given $n\in \mathbb{N}\cup \{0\}$, let $[n]=\{ 0\leq i\leq n\}$ be the ordered set.

We consider two slightly different notions of simplicial complex, one including the empty set and one without. We will mostly use the one without the empty set and will make sure to remind you when we need to include the empty set. 
The corresponding posets of without includes into the one including the empty set.  the one including the empty set will only be used in Section \ref{ahd_section} in the construction of the degeneracies of the classifying space $A_n$.

A {\em simplicial complex} $K=(K,S)$ on the finite totally ordered set $S$ is a non-empty subset of the power set 
$\mathcal P(S)$ of $S$ closed under taking subsets. Often our ordered set $S$ will be a subset of $[n]$. The elements $\sigma\in K$ are called simplices, and $|\sigma |$ denotes their cardinality. For $\alpha\subset S$, anytime we write $\alpha = \{a_0, \cdots, a_s\}$, it will always mean  $a_0 \leq \cdots \leq a_s$.

A map between between simplicial complexes $f\colon (K,S)\rightarrow (K', S')$ is a map $f\colon S\rightarrow S'$ preserving the order (ie $\tau<\rho$ implies $f(\tau)<f(\rho)$), such that if $\sigma\in K$ then $f(\sigma)\in K'$. 

A subsimplicial complex of $K$ is a simplicial complex with a subset of the simplices of $K$ 

We can consider $\sigma\in K$ as a simplicial complex in its own right, as the smallest subsimplicial comlpex of $K$ containing $\sigma$, even though the meanings are slightly different we use the same notation for both. 

A simplicial complex $K$ is also considered a poset in the standard way, for $\tau,\sigma\in K$, $\tau\leq \sigma$ when $\tau\subset \sigma$. We also use $K$ to 
denote the associated poset category.

\begin{nota} \label{dtn_defn}
Let  $K$ be a simplicial complex. 
	For an object $\sigma \in K$, define $\partial\sigma$ as the simplicial complex whose simplices are nonempty proper faces of $\sigma$.	

	For $n \geq 0$,  we have the $n$-simplex $\dt^n={\mathcal P}([n])\setminus \{\emptyset\}$ and  its boundary $\partial\dtn=\mathcal P([n])\setminus \{[n],\emptyset \}$.  For a finite set $D$, 
	$\partial{\mathcal P}(D)={\mathcal P}(D)\setminus \{\emptyset \}$.
	
	 \end{nota}
	 
	 Some might consider the notation for $\partial{\mathcal P}(D)$ but we are only using it in Section \ref{ahd_section} with the opposite category of ${\mathcal P}(D)$. We could alternatively use complements, but that is less convenient. 
 
	\begin{enumerate}
		\item[(i)] For $i \in [n]$, define $\pai \dtn \subseteq \dtn$ to be the full subposet whose objects are nonempty subsets of $[n]_i=[n]\setminus \{i\}$. 
		\item[(ii)] Notice that 
		$
		\pdtn := \bigcup_{i=0}^n \pai \dtn.
		$ 
		\item[(iii)] 
 For $n \geq 1$ and   $0 \leq k \leq n$, define $\lnk \subseteq \dtn$ as the simplicial complex whose simplices are nonempty proper subsets of $[n]$ except $[n]_k$. That is,
	$
	\lnk = \pdtn \backslash [n]_k. 
	$

			\end{enumerate}

There is a unique isomorphism  between $\pai \dtn$ and $\dt^{n-1}$ preserving the order on the single element sets, $\pai \dtn \cong  \dt^{n-1}$. 
Similarly, one has $\pai \dtn \cap \paj \dtn \cong \dt^{n-2}$. We will often make these identifications throughout the paper without further comment.

\begin{rmk} \label{dtn_rmk}
	The poset $\dt^n$ has distinguished morphisms often called face maps, namely  
	\[
	d^i \colon \{a_0, \cdots, \widehat{a}_i, \cdots, a_s\} \lra \{a_0, \cdots, a_s\}, \quad 0 \leq i \leq s.
	\]
	One can check that every morphism of $\dt^n$ can be written as a composition of  $d^i$'s. 
	\end{rmk}

\begin{expl} \label{dt2_expl}
	The following diagram is the poset $\dt^2$. 
	\[
	\dt^2 = \xymatrix{   &   &   &   \{0\} \ar[ld]_-{d^1} \ar[rd]^-{d^1}    &    &  &      \\
		&   &  \{0, 1\}  \ar[rd]^-{d^2} &   &   \{0, 2\} \ar[ld]_-{d^1}  &   &    \\
		&  &     & \{0, 1, 2\}   &    &    &       \\	
		\{1\} \ar[rruu]^-{d^0} \ar[rrr]_-{d^1} &  &    &    \{1, 2\} \ar[u]^-{d^0}  &   &   & \{2\} \ar[lll]^-{d^0}  \ar[lluu]_-{d^0} }
	\]							
\end{expl}

Varying $n$ we get the collection $\dtd = \{\dt^n\}_{n \geq 0}$, which has a natural cosimplicial structure defined as follows. Let $\Delta$ be the category whose  objects  are partially ordered sets of the form $[n], n \geq 0$, with the natural order, and whose morphisms are non-decreasing maps. Let $d^i \colon [n] \lra [n+1]$ and $s^k \colon [n+1] \lra [n]$ be the special morphisms of $\Delta$ (see \cite[Section I.1]{goe_jar09}). It is well known that  $d^i$ and $s^k$ satisfy the cosimplicial identities.

\begin{defn} \label{di_sk_defn}
	\begin{enumerate}
		\item[(i)] Define a  functor $d^i \colon \dt^n \lra \dt^{n+1}, 0 \leq i \leq n+1$, as 
		$
		d^i(\{a_0, \cdots, a_s\}) := \{d^i(a_0), \cdots, d^i(a_s) \}.
		$
		\item[(ii)] Define a functor $s^k \colon \dt^{n+1} \lra \dt^n, 0 \leq k \leq n$, as 
		$
		s^k(\{a_0, \cdots, a_s\}) := \{s^k(a_0), \cdots, s^k(a_s)\}.
		$
	\end{enumerate}
	On morphisms, define $d^i$ and $s^k$ in the obvious way. 
\end{defn}

The following proposition is straightforward. 

\begin{prop} \label{cosimpl_rel_prop}
	The functors $d^i$ and $s^k$ we just defined satisfy the cosimplicial identities. That is, $\dtd = \{\dt^n\}_{n \geq 0}$ is a cosimplicial category. 
\end{prop}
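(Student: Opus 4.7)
The plan is to verify each of the five standard cosimplicial identities
\[
d^j d^i = d^i d^{j-1} \ (i < j), \quad s^j s^i = s^i s^{j+1} \ (i \leq j),
\]
\[
s^j d^i = \begin{cases} d^i s^{j-1} & i < j \\ \id & i \in \{j, j+1\} \\ d^{i-1} s^j & i > j+1 \end{cases}
\]
for the functors defined in Definition~\ref{di_sk_defn}, by reducing to the analogous identities in $\Delta$ itself. The key observation is that the functors $d^i \colon \dtn \to \dt^{n+1}$ and $s^k \colon \dt^{n+1} \to \dtn$ are defined on objects by applying the corresponding morphisms $d^i \colon [n] \to [n+1]$ and $s^k \colon [n+1] \to [n]$ of $\Delta$ pointwise to a subset $\{a_0, \ldots, a_s\}$.

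Concretely, I would take a typical object $\alpha = \{a_0, \ldots, a_s\}$ of $\dtn$ (respectively $\dt^{n+1}$) and compute both sides of each identity elementwise. For example, for the first identity with $i < j$,
\[
d^j d^i(\alpha) = \{d^j d^i(a_0), \ldots, d^j d^i(a_s)\} = \{d^i d^{j-1}(a_0), \ldots, d^i d^{j-1}(a_s)\} = d^i d^{j-1}(\alpha),
\]
where the middle equality uses the already-established cosimplicial identity for the maps in $\Delta$. The same elementwise argument handles each of the other four identities; the only mild subtlety is that $s^k$ need not be injective, so the cardinality of a subset can drop when applying $s^k$, but this is harmless because in each identity both sides produce the same subset (being equal elementwise as multisets, hence equal as sets after collapsing duplicates).

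To complete the check, I would verify that the identities also hold on morphisms of $\dtn$. Since morphisms in $\dtn$ are inclusions, a functor out of $\dtn$ is determined up to at most a unique choice on morphisms, and any two functors that agree on objects and send inclusions to inclusions automatically agree on morphisms. Because each of our $d^i$ and $s^k$ carries an inclusion $\alpha \subseteq \beta$ to the inclusion of the images (the images being subsets of $[n\pm 1]$ obtained by applying a fixed function of $\Delta$), the identities on morphisms follow formally from the identities on objects.

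The main \emph{potential} obstacle is purely notational: making sure that when $s^k$ collapses two consecutive entries of a subset the resulting set is described correctly on both sides of each relation. But once one verifies the elementwise identities in $\Delta$, this is automatic, so in fact there is no genuine obstacle and the proposition is indeed straightforward.
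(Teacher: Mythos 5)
Your proof is correct and matches the paper's intent: the paper simply declares Proposition~\ref{cosimpl_rel_prop} straightforward and gives no argument, and your elementwise reduction to the cosimplicial identities in $\Delta$ (together with the observation that, morphisms in $\dtn$ being inclusions in a poset, checking the identities on objects suffices) is exactly the verification being left to the reader.
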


\begin{nota}
We let $\text{sSet}$ denote the category of simplicial sets and $\text{SsS}$ denote the category of semi-simplicial sets. 
\end{nota}

Objects in $\text{sSet}$ can be thought of as contravariant functors $\Delta\rightarrow \text{Set}$ into the category of sets. Semi-simplicial sets are simplicial sets without the degeneracies. So if $\overline{\Delta}$ is the subcategory whose objects are those $\Delta$ but whose maps are only the inclusions (or face maps), a semi-simplicial set would then be a contravariant functor $\overline{\Delta}\rightarrow \text{Set}$.

Starting with a simplicial set we can forget the degeneracies to get a semi-simplicial set, this functor has an adjoint which gives a simplicial set who non-degenerate simplices are the simplices of the starting semi-simplicial set. 

We can think of a simplcial complex $K$ as a semi-simplicial set and we will then denote $K_{\bullet}$ as the associated simpicial set, we record the adjoint relationship for later use. The map on the left side of the equation corresponds to the restriction of the map on the right to non-degenerate simplices.

\begin{lem}\label{detbyrest}
For any simplicial complex $K$, and simplicial set $S_{\bullet}$, 
$${\underset{\SsS}{\text{Hom}}(K, S_{\bullet})}
\cong {\underset{\text{sSet}}{\text{Hom}}(K_{\bullet}, S_{\bullet})}.$$
\end{lem}

\subsection{Model categories}\label{modcat}

The paper uses a lot of model category  understanding of the results in the paper some familiarity with model categories is required. We use the definition from Hovey's book \cite[Definition 1.1.3]{hovey99}. In particular as part of its structure a model category has three specified classes of morphisms, cofibrations, fibrations and weak equivalences. The weak equivalences are the maps that become isomorphisms in the homotopy category. Also
in a model category
for each morphism $f \colon X \lra Y$ of $\mcalm$, we fix two functorial factorizations: 
\[\label{2fact}
\xymatrix{X \ \ \ar[rr]^-{f} \ar@{>->}[rd]_-{\sim}  &   & Y  \\
	&   V_f   \ar@{->>}[ru] &  } \qquad \xymatrix{X \ \ \ar[rr]^-{f} \ar@{>->}[rd]  &   & Y  \\
	&   W_f   \ar@{->>}[ru]_-{\sim} &  }
\]

The first factoring a map into a map that is both a weak equivlance and cofibriation followed by a fibration and the second factoring the map into a cofibration followed by a map that is both weak equivalence and a fibration. We are assuming both of these factorizations are functorial on maps between maps (in other words on commutative diagrams). Objects are called weakly equivalent if they can be connected by a sequence of weak equivalances (which can go in either direction). Weakly equivalent objects are the objects that become isomorphic in the homotopy category.  

Letting $\emptyset$ be a fixed initial object and $f\colon \emptyset \rightarrow X$ the unique map, then $W_f$ is denoted $QX$ and called the cofibrant replacement. Simlarly if $\ast$ denotes a fixed terminal object and $f\colon Y\rightarrow \ast$ the unique map, we get the fibrant replacement $RY=V_f$.

\subsection{Model category structure on $\mcalm^{\dtn}$}   \label{model_category_subsection}

In this section $\mcalm$ will denote a model category.

\begin{defn} \label{matching_defn}
	Let  $K$ be a simplicial complex. 
	\begin{enumerate}
		\item[(i)] Let $F \in \mcalm^{K}$. The \emph{matching object} of $F$ at $\sigma \in K$, denoted $M_{\sigma}(F)$, is defined as
		\[
		M_{\sigma} (F) := \underset{\alpha \in \partial\sigma}{\text{lim}} \; F(\alpha). 
		\]
		\item[(ii)] For $F \in \mcalm^K$ and $\sigma \in K$, the canonical map $
		F(\sigma) \lra M_{\sigma} (F),
		$
		provided by the universal property of limit, is  called the \emph{matching map} of $F$ at $\sigma$. 
	\end{enumerate}
\end{defn}

We record the following straightforward lemma for future use. 

\begin{lem}\label{same}
A cofunctor $F\colon\dt^n\rightarrow \mathcal C$ is the same as a cofunctor 
$F'\colon\partial \dt^n\rightarrow \mathcal C$ together with a map 
$F([n]) \rightarrow lim_{\partial\dt^n}F'$.  A functor $G\colon {\mathcal P}(D) \rightarrow {\mathcal C}$ is the same as a functor 
$G'\colon \partial {\mathcal P}(D) \rightarrow {\mathcal C}$ together with a map 
$colim_{\partial {\mathcal P}(D)}G'\rightarrow G(D)$.

\end{lem}

\begin{prop} \label{model_prop}
	Let $K$ be a simplicial complex. 
	There exists a model structure called the injective model category on the category $\mcalm^K$ of $K$-diagrams in $\mcalm$ such that weak equivalences and cofibrations are objectwise. 
	Furthermore, a map $F \lra G$ is a (trivial) fibration if and only if the induced map $F(\sigma) \lra G(\sigma) \times_{M_{\sigma}(G)} M_{\sigma} (F)$ is a (trivial) fibration for all $\sigma \in K$.

\end{prop}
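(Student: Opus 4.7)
The plan is to recognize this as an instance of the standard Reedy model structure on a diagram category, and then to identify the resulting matching-map condition with the one in the statement.

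The first step is to check that $\pt$ is a \emph{direct} Reedy category with degree function given by $\deg(\sigma) = \dim(\sigma)$. By Definition~\ref{pt_defn}, morphisms in $\pt$ are face inclusions $\alpha \hookrightarrow \sigma$; the identity corresponds to $\alpha = \sigma$, and any non-identity morphism is a strict face inclusion, which strictly raises the dimension. Hence $\pt$ is direct, and $\pt^{\text{op}}$ is inverse. By the convention on $\mcalc^{\mcals}$ fixed in the introduction, a $\pt$-diagram in $\mcalm$ is a contravariant functor $\pt \to \mcalm$, equivalently a covariant functor out of the inverse category $\pt^{\text{op}}$, so the theorem we want to invoke is the general Reedy existence result applied to $\mcalm^{\pt^{\text{op}}}$.

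Next I would read off the three classes. Weak equivalences are objectwise by definition. Because $\pt^{\text{op}}$ is inverse, every Reedy latching object $L_\sigma F$ collapses to the initial object of $\mcalm$, so the Reedy cofibration condition at each $\sigma$ reduces to the requirement that $F(\sigma) \to G(\sigma)$ be a cofibration in $\mcalm$; in other words, cofibrations are objectwise, matching the statement. Dually, the Reedy matching object at $\sigma$ unpacks to exactly $\lim_{\alpha \in P(\partial\sigma)} F(\alpha) = M_\sigma(F)$ of Definition~\ref{matching_defn}, so the Reedy (trivial) fibration condition becomes the relative matching map condition $F(\sigma) \to G(\sigma) \times_{M_{\sigma}(G)} M_{\sigma}(F)$ being a (trivial) fibration in $\mcalm$.

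The genuine work in the general theorem is the construction of the two functorial factorizations (and the lifting axiom), carried out by induction on degree. For $n = 0$ the zero-simplices have $P(\partial\sigma) = \emptyset$, so $M_\sigma$ is terminal and one simply factors each $F(\sigma) \to G(\sigma)$ in $\mcalm$. For the inductive step at dimension $n$, having factored on all simplices of dimension $< n$, the matching objects $M_\sigma(F)$ and $M_\sigma(G)$ are determined for every $n$-simplex $\sigma$, and one factors the induced map $F(\sigma) \to G(\sigma) \times_{M_{\sigma}(G)} M_{\sigma}(F)$ using the functorial factorization in $\mcalm$ (which is available by the paper's standing convention). The universal property of limits assembles these simplexwise choices into a diagram morphism, and the same induction verifies the lifting axiom by reducing each lifting problem to a single lifting problem in $\mcalm$.

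The main obstacle here is really organizational rather than mathematical: once $\pt$ is identified as a direct Reedy category with degree equal to simplicial dimension, the standard Reedy machinery applies verbatim, and the 2-out-of-3, retract, and closure axioms are inherited immediately from the corresponding axioms in $\mcalm$ via the objectwise and relative-matching characterizations of the three classes.
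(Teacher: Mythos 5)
Your proposal is correct and follows essentially the same route as the paper: the paper's proof simply observes that $\pt$ is a direct category (so contravariant diagrams are inverse diagrams) and cites Hovey's Theorem 5.1.3, which is exactly the Reedy-type structure you identify and whose proof you sketch, including the correct identification of the matching object with $\lim_{\alpha \in P(\partial\sigma)} F(\alpha)$.
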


\begin{proof}
	This follows from two facts. The first one is the fact that the poset $K$ is clearly a \textit{direct category} in the sense of \cite[Definition 5.1.1]{hovey99}. So any $F \in \mcalm^K$ is an \textit{inverse diagram} \footnote{An \textit{inverse diagram} is either a covariant functor out of a small inverse category \cite[Definition 5.1.1]{hovey99} or a contravariant functor out of a small direct category.} (remember that for us diagram means contravariant functor). The second fact is  \cite[Theorem 5.1.3]{hovey99}. 
\end{proof}

The category of diagrams, $\mcalm^K$, will mostly be always endowed with this model structure which is often called the injective model category structures. However when constructing the degeneracies of our classifying space  $\ahd$ we will use the projective model category structure whose fibrations and weak equivalences are pointwise (at the same time as having the empty set in our simplicial complexes). The dual result corresponding to Proposition~\ref{model_prop} also holds in the projective model category. 

\begin{coro}\label{fibtest}
	A diagram $F \colon K \lra \mcalm$ in the injective model category is fibrant if and only
 if all of its $k$-faces, $0 \leq k \leq n$, are fibrant $\dt^k$-diagrams. By \textit{$k$-face} of $F$, we mean 
a diagram $F\circ \tau$, where $\tau \colon \dt^k \hookrightarrow \dt^n$ is one of the canonical inclusion maps.  
Similarly for cofibrant diagrams in the projective model category structure. 
	
\end{coro}

\begin{prop} \label{fibrant_prop}
	Let $K$ be a finite simplicial complex. Let $F \colon K \lra \mcalm$ be a fibrant diagram.  Then the limit $\underset{\alpha \in K}{\lim} F(\alpha)$ is a fibrant object of $\mcalm$.
	
\end{prop}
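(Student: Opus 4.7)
The plan is to prove fibrancy by induction on the skeletal filtration of $\mcalt$. Write $\mcalt_0 \subseteq \mcalt_1 \subseteq \cdots \subseteq \mcalt_n = \mcalt$ for the $k$-skeleta, and let $P(\mcalt_k) \subseteq \pt$ be the corresponding full subposets. I will show that each restricted limit $L_k := \lim_{\alpha \in P(\mcalt_k)} F(\alpha)$ is fibrant by induction on $k$, so that in particular $L_n = \lim_{\alpha \in \pt} F(\alpha)$ is fibrant.

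For the base case $k = 0$, the poset $P(\mcalt_0)$ is discrete (a set of vertices of $\mcalt$), so $L_0 = \prod_{v} F(v)$. For each vertex $v$, $\partial v = \emptyset$, whence $M_v(F)$ is the terminal object, and the matching map $F(v) \to M_v(F)$ being a fibration simply says $F(v)$ is fibrant. Products of fibrant objects are fibrant, so $L_0$ is fibrant.

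For the inductive step, I claim there is a pullback square
\[
\xymatrix{
L_k \ar[r] \ar[d] & \displaystyle\prod_{\dim \sigma = k} F(\sigma) \ar[d] \\
L_{k-1} \ar[r] & \displaystyle\prod_{\dim \sigma = k} M_{\sigma}(F),
}
\]
where the right vertical arrow is the product of matching maps, and the bottom arrow sends an element of $L_{k-1}$ to the family of its restrictions to $P(\partial\sigma)$. This follows directly from the definition of the limit: specifying a compatible family $(x_\alpha)_{\alpha \in P(\mcalt_k)}$ is the same as specifying a compatible family on $P(\mcalt_{k-1})$ together with, for each $k$-simplex $\sigma$, an element $x_\sigma \in F(\sigma)$ whose image in $M_\sigma(F)$ agrees with the element already determined by the restriction to $P(\partial \sigma)$. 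Because $F$ is fibrant, each matching map $F(\sigma) \to M_\sigma(F)$ is a fibration by Proposition~\ref{model_prop}, so the right vertical arrow is a product of fibrations, hence a fibration. Pullbacks preserve fibrations, so the left vertical arrow $L_k \to L_{k-1}$ is a fibration. By the induction hypothesis $L_{k-1}$ is fibrant, and therefore so is $L_k$.

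The main thing to verify carefully is the pullback decomposition; everything else is immediate from the definition of the injective model structure on $\mcalm^{\pt}$ and the basic fact that pullbacks of fibrations along maps out of fibrant objects yield fibrant objects. Finiteness of $\mcalt$ ensures the induction terminates after $n = \dim \mcalt$ steps.
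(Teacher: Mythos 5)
Your proof is correct, but it takes a different route from the paper. You prove fibrancy by hand, inducting over the skeleta of $\mcalt$ and exhibiting each $L_k \to L_{k-1}$ as a pullback of the product of matching maps $\prod_{\dim\sigma=k}\bigl(F(\sigma)\to M_\sigma(F)\bigr)$; since these are fibrations when $F$ is fibrant in the model structure of Proposition~\ref{model_prop}, the tower $L_n \to \cdots \to L_0 \to \ast$ consists of fibrations and the total limit is fibrant. This is in effect a direct proof that the limit functor on inverse ($\pt$-indexed) diagrams is right Quillen for this Reedy-type structure (compare \cite[Corollary 5.1.6]{hovey99}); it is self-contained, uses nothing beyond Proposition~\ref{model_prop} and closure of fibrations under products, pullbacks and composition, and in fact only needs $\mcalt$ to be finite-dimensional rather than finite. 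The paper argues differently: fibrancy of $F$ gives both that $F$ is objectwise fibrant and that $\lim F$ agrees with $\operatorname{holim} F$, and then fibrancy of the homotopy limit of an objectwise fibrant diagram is quoted from \cite[Theorem 18.5.2]{hir03}. That argument is shorter but leans on the homotopy-limit machinery (and the simplicial/framing context in which that theorem is stated), whereas yours avoids homotopy limits entirely. Your pullback decomposition is stated and justified correctly (the only new compatibility conditions when passing from $P(\mcalt_{k-1})$ to $P(\mcalt_k)$ are those against the proper faces of each $k$-simplex, which is exactly the matching condition), so there is no gap.
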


\begin{proof}
	Since $F$ is fibrant, it follows that for every $\sigma \in K$, $F(\sigma)$ is fibrant. Furthermore the limit of the diagram $F$ is the same as its homotopy limit again because $F$ is fibrant.  Applying now \cite[Theorem 18.5.2]{hir03}, we get the desired result.  
\end{proof}

\begin{prop}  \label{we_prop}
	1) Let $F \colon \lnk \lra \mcalm$ be a fibrant weak equivalence diagram (ie one in which every morphism is a weak equivalence). Then for any $\alpha' \in \lnk$, the canonical projection $p_{\alpha'} \colon \underset{\alpha \in \lnk}{\lim} F(\alpha) \lra F(\alpha')$ is a weak equivalence. 
	
	2) Let $D$ be a finite set and $G\colon {\mathcal P}(D)^{\rm op} \lra\mcalm$ (or $\partial {\mathcal P}(D)^{\rm op} \lra\mcalm$) be cofibrant weak equivalence diagram. Then for any $\alpha' \in  {\mathcal P}(D)$ ($\partial {\mathcal P}(D)$), the canonical inclusion 
	$G(\alpha')\lra  \underset{\alpha \in  {\mathcal P}(D)}{colim} G(\alpha)$
	 ($ \underset{\alpha \in {\partial {\mathcal P}(D)}}{colim} G(\alpha)$)
	is a cofibration and a weak equivalence. 

\end{prop}

\begin{proof}
	By inspection, one can see that the indexing category $\lnk$ is contractible. One can then apply the dual of \cite[Corollary 1.18]{cis09} to get the first result. 
	
	Noting that ${\mathcal P}(D)$ has an initial object, the second result is similar. 
\end{proof}

\subsection{Localization of categories}   \label{local_category_subsection}

Again in this section $\mcalm$ is a model category.

The aim of this section is to recall some classical results  about the localization of categories we need. Our main references are \cite{dwyer_spa95} and \cite{hovey99}. The material of this section will be used in Section~\ref{fautm_futma_subsection}.

Let $\mcale$ be a  model category and let $\mcald \subseteq \mcale$ be a full subcategory (not necessarily a model subcategory) of $\mcale$. One should think of $\mcale$ as the diagram category $\mcalm^{\utm}$  and $\mcald$ as the category $\futma$ that will be introduced in Definition~\ref{fuca_defn}. We denote by $\mcalw_{\mcald}$ the class of weak equivalences of $\mcale$ that lie in $\mcald$. We will use the standard notation $\mcald[\mcalw_{\mcald}^{-1}]$ for the localization of $\mcald$ with respect to $\mcalw_{\mcald}$. Roughly speaking,  $\mcald[\mcalw_{\mcald}^{-1}]$ has the same objects as $\mcald$, and morphisms of $\mcald[\mcalw_{\mcald}^{-1}]$ are strings $(f_1, \cdots, f_r)$ of composable arrows where $f_i$ is either an arrow of $\mcald$ or the formal inverse $w^{-1}$ of an arrow $w$ of $\mcalw_{\mcald}$.

Recall the notion of cylinder object for $X \in \mcale$, denoted $X \times I$, from \cite[Definition 1.2.4]{hovey99}, and let $i_0$ and $i_1$ be the canonical maps from $X$ to $X \times I$. The following result is straightforward.

\begin{prop} \label{izio_prop}
	Assume that for any $X \in \mcald$ there is a  cylinder object $X \times I$ for $X$ in $\mcald$. Then one has $i_0 = i_1$ in the category $\mcald[\mcalw_{\mcald}^{-1}]$. 
\end{prop}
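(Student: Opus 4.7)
\textbf{Proof plan for Proposition~\ref{izio_prop}.} The plan is simply to exploit the weak-equivalence half of the cylinder-object factorization. Recall that a cylinder object $X \times I$ for $X$ comes with a factorization of the fold map $\nabla \colon X \sqcup X \lra X$ as a cofibration $(i_0, i_1) \colon X \sqcup X \lra X \times I$ followed by a weak equivalence $\sigma \colon X \times I \lra X$. In particular, one has the strict equalities $\sigma \circ i_0 = \id_X = \sigma \circ i_1$, holding already in $\mcale$ (and hence in $\mcald$, since $\mcald$ is full in $\mcale$ and both $X$ and $X \times I$ are assumed to lie in $\mcald$).

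Next I would observe that $\sigma$, being a weak equivalence of $\mcale$ whose source and target both lie in $\mcald$, belongs to $\mcalw_{\mcald}$. Therefore $\sigma$ is invertible in the localized category $\mcald[\mcalw_{\mcald}^{-1}]$. Pre-composing the equality $\sigma \circ i_0 = \sigma \circ i_1$ with $\sigma^{-1}$ on the left gives
\[
i_0 \;=\; \sigma^{-1}\circ \sigma \circ i_0 \;=\; \sigma^{-1}\circ \sigma \circ i_1 \;=\; i_1
\]
in $\mcald[\mcalw_{\mcald}^{-1}]$, which is the desired conclusion.

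There is essentially no obstacle here; the only things to verify are bookkeeping: that the maps $i_0$, $i_1$ and $\sigma$ genuinely are morphisms of $\mcald$ (immediate from the assumption that $X \times I$ lies in $\mcald$ together with the fullness of $\mcald \subseteq \mcale$), and that $\sigma$ genuinely belongs to $\mcalw_{\mcald}$ (immediate from the definition of $\mcalw_{\mcald}$ as the weak equivalences of $\mcale$ whose endpoints lie in $\mcald$). Thus the proposition is an immediate formal consequence of the definition of a cylinder object in $\mcald$ and of the localization $\mcald[\mcalw_{\mcald}^{-1}]$.
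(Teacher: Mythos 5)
Your argument is correct and is exactly the standard justification: since $\sigma\colon X\times I\to X$ is a weak equivalence between objects of $\mcald$, it lies in $\mcalw_{\mcald}$ and becomes invertible in $\mcald[\mcalw_{\mcald}^{-1}]$, so $\sigma i_0=\id_X=\sigma i_1$ forces $i_0=i_1$ there. The paper gives no proof (it declares the proposition straightforward), and your reasoning is the intended one.
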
 

For  $f, g \colon X \lra Y$ in $\mcald$, if $f$ is homotopic to $g$ (see \cite[Definition 1.2.4]{hovey99}), we will write $f \sim g$. 

\begin{prop}\cite[Corollary 1.2.6]{hovey99} \label{htpy_prop}
	Let $f, g \colon X \lra Y$ be morphisms of $\mcale$. Assume $X$ cofibrant and $Y$ fibrant.  If $f \sim g$ then there is a left homotopy $H \colon X \times I \lra Y$ from $f$ to $g$ using any cylinder object $X \times I$. 
\end{prop}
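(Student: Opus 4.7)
This proposition is Hovey's Corollary 1.2.6 \cite{hovey99}, a classical result on the invariance of the left homotopy relation under the choice of cylinder object, valid when $X$ is cofibrant and $Y$ is fibrant. Recall that in Hovey's convention, a cylinder object for $X$ is a factorization $X \sqcup X \rightarrowtail X \times I \xrightarrow{\sim} X$ of the fold map in which the first map is already a cofibration; this structural property will be used throughout.

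The plan is to compare any cylinder object with a specific very good cylinder object. Using CM5 applied to the fold map $X \sqcup X \to X$, I would first construct a very good cylinder object $X \times^{vg} I$: a factorization $X \sqcup X \xrightarrow{s^{vg}} X \times^{vg} I \xrightarrow{w^{vg}} X$ in which $s^{vg}$ is a cofibration and $w^{vg}$ is a trivial fibration. I would then produce a left homotopy $H^{vg} : X \times^{vg} I \to Y$ from $f$ to $g$ via this specific cylinder, starting from the given homotopy $H'$. This requires chaining several lifts: first compare $X \times' I$ with $X \times^{vg} I$ by lifting the cofibration $s' : X \sqcup X \to X \times' I$ against the trivial fibration $w^{vg}$; then factorize the resulting comparison map as a trivial cofibration followed by a trivial fibration; extend $H'$ along the trivial cofibration using the fibrancy of $Y$; and finally obtain a section of the trivial fibration using the cofibrancy of $X \times^{vg} I$ (which holds since $X$ is cofibrant and $s^{vg}$ is a cofibration). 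Composing these data gives $H^{vg}$.

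The main construction is then a single lift. Given the target cylinder object $X \times I$, consider the square
\[
\xymatrix{
X \sqcup X \ar[r]^{s^{vg}} \ar[d]_{s} & X \times^{vg} I \ar@{->>}[d]^{w^{vg}}_{\sim} \\
X \times I \ar[r]^{w} \ar@{-->}[ur]^{\ell} & X
}
\]
which commutes because both compositions equal the fold map by the defining property of cylinder objects. Since $s$ is a cofibration and $w^{vg}$ is a trivial fibration, the lift $\ell : X \times I \to X \times^{vg} I$ exists and satisfies $\ell \circ s = s^{vg}$, equivalently $\ell \circ i_j = i_j^{vg}$ for $j = 0, 1$. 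Defining $H := H^{vg} \circ \ell$, we obtain $H \circ i_j = H^{vg} \circ i_j^{vg} = f_j$, as required.

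The main obstacle is the construction of $H^{vg}$: transferring $H'$ to a homotopy via the specific cylinder $X \times^{vg} I$ requires careful interleaving of the cofibrancy of $X$ (to guarantee that various inclusions are cofibrations, and in particular that $X \times^{vg} I$ is cofibrant) with the fibrancy of $Y$ (to extend the homotopy along a trivial cofibration in the intermediate step). Once $H^{vg}$ is in hand, the final lift in the displayed square is comparatively routine, requiring only the bookkeeping that the comparison map $\ell$ respects the cylinder structure.
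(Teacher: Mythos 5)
The paper itself gives no proof of this proposition: it is quoted directly from Hovey (Corollary 1.2.6), so there is no internal argument to compare with, and your task is really to reconstruct Hovey's proof. Your overall strategy is the standard one (reduce to a very good cylinder object, then lift the arbitrary cylinder against its trivial fibration), and the outer steps are correct. However, there is one genuine gap in the step that produces $H^{vg}$. Write $s'\colon X\sqcup X \to X\times' I$ and $w'\colon X\times' I \to X$ for the structure maps of the cylinder carrying the given homotopy $H'$. Your comparison map $k\colon X\times' I \to X\times^{vg} I$ satisfies $k\circ s' = s^{vg}$ and $w^{vg}\circ k = w'$, and you factor it as a trivial cofibration $j\colon X\times' I \to Z$ followed by a trivial fibration $q\colon Z \to X\times^{vg} I$, then extend $H'$ to $\widetilde{H}\colon Z \to Y$ with $\widetilde{H}\circ j = H'$ using fibrancy of $Y$. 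But you then choose the section $t$ of $q$ using only the cofibrancy of the object $X\times^{vg} I$, i.e., by lifting $\emptyset \to X\times^{vg} I$ against $q$. Such a $t$ satisfies $q\circ t = \mathrm{id}$ but carries no compatibility with the endpoint inclusions: there is no reason that $t\circ s^{vg} = j\circ s'$; these two maps only agree after composing with $q$. Consequently $H^{vg}:=\widetilde{H}\circ t$ restricts on the two ends of $X\times^{vg} I$ to maps that are merely homotopic to $f$ and $g$, not equal to them, and your final composite $H = H^{vg}\circ \ell$ need not be a left homotopy from $f$ to $g$.

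The repair is immediate, and it uses the cofibration $s^{vg}$ itself rather than just the cofibrancy of its target: choose $t$ as a lift in the commutative square whose top edge is $j\circ s'\colon X\sqcup X \to Z$, left edge the cofibration $s^{vg}\colon X\sqcup X \to X\times^{vg} I$, right edge the trivial fibration $q$, and bottom edge the identity of $X\times^{vg} I$; it commutes because $q\circ j\circ s' = k\circ s' = s^{vg}$. Then $q\circ t = \mathrm{id}$ and, crucially, $t\circ s^{vg} = j\circ s'$, so $H^{vg}\circ s^{vg} = \widetilde{H}\circ j\circ s' = H'\circ s'$, which is the map $(f,g)\colon X\sqcup X \to Y$. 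Your last lifting square (with $\ell\circ s = s^{vg}$) then correctly gives $H = H^{vg}\circ \ell$ with $H\circ i_0 = f$ and $H\circ i_1 = g$. With this one change your argument is a complete and correct proof of the statement in the form the paper uses (note that the paper indeed takes cylinder objects in Hovey's sense, so the structure maps $s$ and $s'$ are cofibrations, as your two lifting squares require).
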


The following proposition follows by standard methods.

\begin{prop}  \label{ho_prop} Assume that $\mcald$ is closed under taking fibrant and cofibrant replacements and cylinder objects (ie each object in the category has a cylinder object in the category).
	Let $X, Y \in \mcald$. 
	\begin{enumerate}
		\item[(i)] Then there is a natural isomorphism 
		\[
		\xymatrix{\underset{\mcald}{\text{Hom}} (QX, RY)\slash {\sim} \;   \ar[rr]^-{\cong}_-{\theta'} &  & \underset{\mcald[\mcalw_{\mcald}^{-1}]}{\text{Hom}} (X, Y)}.
		\] 
		\item[(ii)] If $X$  and $Y$ are both fibrant and cofibrant, the map $\underset{\mcald}{\text{Hom}} \; (X, Y) \stackrel{\varphi}{\lra} \underset{\mcald[\mcalw_{\mcald}^{-1}]}{\text{Hom}} (X, Y)$ induced by the canonical functor $\mcald \lra \mcald[\mcalw_{\mcald}^{-1}]$ is surjective.
	\end{enumerate}
\end{prop}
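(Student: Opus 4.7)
The plan is to reduce both assertions to the classical identification \cite[Theorem 1.2.10]{hovey99} of hom-sets in the homotopy category of the ambient model category $\mcale$. Since $\mcald \hookrightarrow \mcale$ is a full subcategory containing $QX$ and $RY$ (by the closure hypothesis), we have $\underset{\mcald}{\text{Hom}} (QX, RY) = \underset{\mcale}{\text{Hom}} (QX, RY)$, and since $QX$ is cofibrant and $RY$ is fibrant, the left- and right-homotopy relations coincide with an equivalence relation computed by any cylinder object (Proposition~\ref{htpy_prop}). Closure of $\mcald$ under the functorial factorizations of $\mcale$ will ensure that cylinder objects of cofibrant objects can be chosen inside $\mcald$.

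For part (i), I would define $\theta'$ directly: given $f \colon QX \lra RY$ in $\mcald$, let $q_X \colon QX \srel X$ and $r_Y \colon Y \srel RY$ be the functorial replacement weak equivalences and set
\[
\theta'([f]) := r_Y^{-1} \circ f \circ q_X^{-1} \in \underset{\mcald[\mcalw_{\mcald}^{-1}]}{\text{Hom}} (X, Y).
\]
Well-definedness on homotopy classes is precisely Proposition~\ref{izio_prop}: if $f \sim g$ via a left homotopy $H \colon QX \times I \lra RY$ with cylinder object in $\mcald$, then $f = H \circ i_0 = H \circ i_1 = g$ in the localization. Naturality in $X$ and $Y$ is immediate from functoriality of the replacements.

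For bijectivity, surjectivity follows from a standard calculus-of-fractions argument: any morphism of $\mcald[\mcalw_{\mcald}^{-1}]$ from $X$ to $Y$ is represented by a zigzag in $\mcald$, and using the $2$-out-of-$3$ property together with the functorial replacements one rewrites every such zigzag as one of the canonical form $X \la QX \ra RY \la Y$. Injectivity is obtained by factoring $\theta'$ through the ambient homotopy category: the composite
\[
\underset{\mcald}{\text{Hom}} (QX, RY)\slash \sim \; \stackrel{\theta'}{\lra} \; \underset{\mcald[\mcalw_{\mcald}^{-1}]}{\text{Hom}} (X, Y) \; \lra \; \underset{\mcale[\mcalw_{\mcale}^{-1}]}{\text{Hom}} (X, Y),
\]
where the second arrow is induced by the canonical functor $\mcald[\mcalw_{\mcald}^{-1}] \lra \mcale[\mcalw_{\mcale}^{-1}]$, agrees with the classical bijection of \cite[Theorem 1.2.10]{hovey99}, so $\theta'$ is forced to be injective.

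Part (ii) is then immediate: if $X$ and $Y$ are already fibrant and cofibrant, one may take $QX = X$ and $RY = Y$ with $q_X = \id$, $r_Y = \id$, and part (i) yields an isomorphism $\underset{\mcald}{\text{Hom}} (X, Y)\slash \sim \;\cong\; \underset{\mcald[\mcalw_{\mcald}^{-1}]}{\text{Hom}} (X, Y)$ through which $\varphi$ factors as the quotient map followed by $\theta'$, and is therefore surjective. The main obstacle will be making the cylinder-object and zigzag-rewriting steps rigorous inside $\mcald$; the closure hypothesis on $\mcald$ under fibrant and cofibrant replacements is precisely what allows the functorial factorizations of $\mcale$ to be performed within $\mcald$ and is the key technical point.
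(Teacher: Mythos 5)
Your part (ii) is essentially the paper's own argument: the paper proves (ii) by the commuting triangle in which $\varphi$ factors as the canonical surjection $\pi$ onto homotopy classes followed by the isomorphism $\theta'$ of part (i), and taking $QX = X$, $RY = Y$ as you do amounts to the same thing. For part (i), however, the paper gives no construction at all: it simply cites \cite[Theorem 1.2.10]{hovey99}. You instead attempt a direct proof. Your injectivity argument --- factoring $\theta'$ through $\underset{\mcale[\mcalw_{\mcale}^{-1}]}{\text{Hom}}(X,Y)$ and comparing with the classical bijection for the ambient model category --- is sound and is a genuinely economical way to avoid redoing work inside $\mcald$.

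The gap is the surjectivity step of (i). You invoke ``a standard calculus-of-fractions argument'' to rewrite an arbitrary zigzag in $\mcald$ into the canonical form $X \la QX \ra RY \la Y$ using only the $2$-out-of-$3$ property and functorial replacements. But $\mcalw_{\mcald}$ is not known to admit a calculus of fractions, and this rewriting is exactly the non-formal content of Hovey's theorem: even in $\mcale$ its proof uses the full model structure, not just $2$-out-of-$3$. Concretely, to absorb a single inverted arrow, say $\gamma(w)^{-1}\circ\gamma(f)$ with $w \colon Z \lra Y$ in $\mcalw_{\mcald}$ and $f \colon QX \lra RY$, you must (a) produce $h \colon QX \lra RZ$ with $Rw \circ h \sim f$ (a homotopy-lifting argument along a weak equivalence of fibrant objects from a cofibrant source, carried out in $\mcale$), and (b) know that homotopic maps become equal in $\mcald[\mcalw_{\mcald}^{-1}]$, which via Propositions~\ref{izio_prop} and~\ref{htpy_prop} requires a cylinder object for $QX$ lying in $\mcald$. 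Your claim that closure under fibrant and cofibrant replacements already yields such cylinder objects is not justified: that hypothesis concerns only the two replacement functors, not general factorizations, and in the paper the cylinder objects are supplied separately, for the categories actually used, by Proposition~\ref{mcalrb_prop}. So either add the cylinder-object hypothesis (as Proposition~\ref{izio_prop} does) and then actually carry out the induction over the zigzag, or do as the paper does and deduce part (i) from \cite[Theorem 1.2.10]{hovey99}; as written, the crucial step is asserted rather than proved.
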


\begin{proof}
	Part (i) follows along the lines of \cite[Theorem 1.2.10]{hovey99}. More precisely let $\mcald_{QR}$ be the subcategory of objects of the form $QRX$ for $X\in\mcald$. 
	Then since $\mcald$ is closed under fibrant and cofibrant replacement, we can follow the proof of \cite[Proposition 1.2.3]{hovey99} to see 
	that $\mcald_{QR}[\mcalw_{\mcald}^{-1}]\rightarrow \mcald[\mcalw_{\mcald}^{-1}]$ is an equivalence of categories, and since $\mcald$ is closed under taking 
	cylinder objects we can follow the proof of \break \cite[Corollary 1.2.9]{hovey99}
	to get an equivalence $\mcald_{QR}\slash{\sim}  \rightarrow \mcald_{QR}[\mcalw_{\mcald}^{-1}]$. The proof is completed using the natural isomorphism 
	 $\underset{\mcald}{\text{Hom}} (QRX, QRY)\slash{\sim} \rightarrow \underset{\mcald}{\text{Hom}} (QX, RY)\slash {\sim}$.

	Part (ii) comes from the fact that the following canonical triangle commutes and the map $\pi$ is surjective.
	\[
	\xymatrix{\underset{\mcald}{\text{Hom}} (X, Y)  \ar[rr]^-{\varphi}  \ar[rrd]_-{\pi} &  & \underset{\mcald[\mcalw_{\mcald}^{-1}]}{\text{Hom}} (X, Y)  \\
		&    &    \underset{\mcald}{\text{Hom}} (X, Y) \slash {\sim}. \ar[u]_-{\cong}^-{\theta'}}
	\]
\end{proof}

\section{The simplicial set $\ahd$}  \label{ahd_section}

In this section $\mcalm$ is a  model category, and $A$ is an object of $\mcalm$.  The goal here is to construct a simplicial set $\ahd$, which is a classifying space for linear functors whose value on a single ball is equivalent to $A$. We also show that $\ahd$ is a Kan complex (see Proposition~\ref{ahd_prop}). We need $\ahd$ to be a Kan complex  because of (\ref{res_eqn5}), which involves homotopy classes 
\footnote{If $X_{\bullet}$ and $Y_{\bullet}$ are two simplicial sets, one can consider the homotopy relation (see \cite[Section I.6]{goe_jar09}) in the set of simplicial maps from $X_{\bullet}$ to $Y_{\bullet}$. If one wants that relation to be an equivalence relation, one has to require $Y_{\bullet}$ to be a Kan complex \cite[Corollary 6.2]{goe_jar09}.} 
of simplicial maps into $\ahd$. This section is organized as follows. In Section~\ref{ca_subsection} we define a small category $\ca$, which will play the role of the target category for many functors including $n$-simplices of $\ahd$. In Section~\ref{mcalr_subsection} we define an explicit fibrant replacement functor $\mcalr \colon \ca^{\dtn} \lra \ca^{\dtn}$, which will be essentially used to define degeneracy maps of $\ahd$. Lastly, in Section~\ref{ahd_subsection}, we define $\ahd$ and prove Proposition~\ref{ahd_prop}.

\subsection{The category $\ca$}  \label{ca_subsection}

In Section~\ref{ahd_subsection}, we will define an $n$-simplex of $\sigma \in\ahd$ as a contravariant functor (sometimes called a diagram) from $\dtn$ to a very particular subcategory $\ca$ of $\mcalm$. This subcategory is closed under certain conditions and all of its objects are fibrant-cofibrant and of the same homotopy type. All of the maps in the diagram $\sigma$ must be weak equivalences. 

To guarantee that the collection of all $n$-simplices is actually a set, we need $\ca$ to be small. 

Recall the notation $V_f$ and $W_f$ from \ref{2fact}.

\subsubsection{Closure conditions}\label{clop}
Suppose $\mcalc\subset \mcalm$ is a full subcategory. 
Consider the following closure conditions on $\mcalc$. 

(A) Closed under cylinder objects. For any $X\in \mcalc$, $W_g\in \mcalc$ where $g$ is the fold map $ X\coprod  X \rightarrow X$. 

(B) Suppose we are given $X\in \mcalc$, functor $F\colon \partial \dtn\rightarrow \mcalc$ and map $\phi\colon X\rightarrow \underset{\alpha \in \partial \dtn}{\text{lim}}F(\alpha) $. If $F$ is fibrant in the injective model category on diagrams then $V_{\phi}\in \mcalc$. 

(C) Given 
$F\colon \partial \dtn\rightarrow \mcalc$, $G\colon \partial {\mathcal P}(D)^{\rm op}\rightarrow \mcalc$
and map $\phi\colon \underset{\alpha \in \partial {\mathcal P}(D)^{\rm op}}{\text{colim}}G(\alpha)\rightarrow \underset{\alpha \in \partial \dtn}{\text{lim}}F(\alpha) $. If $F$ is fibrant in the injective model category and $G$ is a weak equivalence diagram  and  cofibrant in the projective model category then $V_{\phi}\in \mcalc$. 

(D) Closed under fibrant and cofibrant replacement. If $X\in \mcalc$, then $QX,RX\in \mcalc$. 

(E) Suppose $F\colon \lnk\rightarrow \mcalc$ is a diagram where all the morphisms are weak equivalences. If $F$ is fibrant in the injective model structure then  $Q\lim_{\lnk} F \in \mcalc$. 


For any $\mcalc$ we let $\mcalc'$ denote the objects that can be created using (A)-(E), with elements of $\mcalc$ as inputs (so either as $X$ or as the values of the functors $F$ and $G$), so 
$\mcalc$ is closed 
under (A)-(E), is the same as saying that $\mcalc'\subset \mcalc$.

\begin{lem}\label{createsniceprops} Given any full subcategory $\mcalc\subset \mcalm$,

1)
There is a unique smallest (full) category $\overline{\mcalc} \subset \mcalm$ 
such that $\mcalc \subset \overline{\mcalc}$ and  $\overline{\mcalc}$ is closed under (A)-(E).

2) If $\mcalc$ is small then  $\overline{\mcalc}$ is small.

3) If all objects of $\mcalc$ are weakly equivalent and fibrant-cofibrant then all objects of 
  $\overline{\mcalc}$ are weakly equivalent and fibrant-cofibrant.

\end{lem}

\begin{proof}

This proof follows a small object argument (or compactness) type argument. 
Define a sequence of categories one for each natural number by

 $\mcalc(0)=\mcalc$, $\mcalc(i+1)=\mcalc(i)\cup \mcalc(i)'$. Then let $\overline{\mcalc}=\bigcup \mcalc(i)$. 
 
 For closure under (B), if we have a map $\phi$,  $X$ and the values 
 $F(\alpha)$ for $\alpha\in \partial \dtn$ all appear in some $C(i)$ since there are only finitely 
 many objects involved. Then $V_{\phi} \in C(i+1)$. The other cases are similar. 
 So we see that $\overline{\mcalc}$ is closed under (A)-(E), and clearly anything closed under (A)-(E) must contain 
 $\overline{\mcalc}$ proving uniqueness, so we have proved Part 1. Moving on to part 2,
  when $\mcalc$ is small so is $\mcalc'$ and so we see that $\overline{\mcalc}$ is small.

If $X$ is cofibrant and $Y$ is fibrant then $V_f$ and $W_f$ are fibrant-cofibrant. So we just need to check the objects that appear in the closure conditions for fibrancy or cofibrancy. 
 
   That all objects in $\mcalc$ 
 are weakly equivalent is straightforward in (A) and (B). For (C) it follows by Proposition \ref{we_prop}
 since $G$ is a weak equivalence cofibrant diagram that the colimit is cofibrant and weakly equivalent to $A$ and by Propositions \ref{fibrant_prop} that the limit is fibrant. This also shows fibrant-cofibrant in case (B). 
 
 Considering case (A) since $X$ is cofibrant, $X\coprod X$ is cofibrant, and of course $X$ is also fibrant. In 
 case (E) we use Propositions \ref{fibrant_prop} and \ref{we_prop} so see that the limit is fibrant and 
 weakly equivalent to $A$, taking cofibrant replacement gets us cofibrancy as well. 
 Case (B) can be done by the reader. 
 \end{proof}
 
 \begin{nota}\label{CAnotation}
For a fibrant-cofibrant object $A\in \mcalm$, we let $\ca$ denote $\overline{\{ A\}}$. So in particular by Lemma \ref{createsniceprops} $\ca$ is small, all of its objects are weakly equivalence and fibrant-cofibrant and it is closed under the closure conditions (A)-(E) above.
\end{nota}

\subsection{Special fibrant replacement functor $\ca^{\dtn} \lra \ca^{\dtn}$} \label{mcalr_subsection}

\begin{prop} \label{mcalr_prop}
Let $K\subset L$ be a subcomplex of a simplicial complex. 
	Suppose $F \colon L \lra \ca$ is a contravariant functor such that $F|_K$ is fibrant. Then there is a functor
	$\mcalr_{K,L}(F) \colon L \lra \ca$ such that:
	\begin{enumerate}
		\item[(i)] We have a natural transformation $\eta\colon F\rightarrow \mcalr_{K,L}(F)$
		\item[(ii)] For all $\sigma\in \dtn$, $\eta_{\sigma}$ is a cofibration, 
		a weak equivalence and the identity if $\sigma\in K$. This also implies that $\mcalr_{K,L}(F)$ is a weak equivalence diagram if $F$ is. 
		\item[(iii)] $\mcalr_{K,L}(F)\colon L\lra\ca$ is fibrant.
			\end{enumerate}
			 If $K\subset \dtn$ we sometimes write $\mcalr_{K,L}$ as 
		$\mcalr_K$. 
\end{prop}

\begin{proof}

	This follows from the standard contruction of the fibrant replacement in the injective model category structure. 
More precisely first consider the case when $K=\partial \dtn$ and $L=\dtn$, and let 
$\phi \colon F([n])\rightarrow \underset{\sigma \in \partial \dtn}{\lim} F(\sigma)$ 
be the induced map. Then letting 
$\mcalr_K(F)([n])=V_{\phi}$ and $\mcalr_K(F)(\sigma)=F(\sigma)$ otherwise, we complete the construction. Note that $V_{\phi}\in \mathcal \ca$ using the closure conditions (see Lemma \ref{createsniceprops} and Notation
\ref{CAnotation}). 

The general case comes from changing the functor one simplex $\sigma$ at a time. Note that outside of $\sigma$ the functor is not changed. Maps out of the new value at $\sigma$, $V_{\phi}$ factor through the limit and maps into $V_{\phi}$ are given by the composition through $F(\sigma)$.

\end{proof}

\subsection{The simplicial set $\ahd$}  \label{ahd_subsection}

To prove the main result of this paper (that is, Theorem~\ref{main_thm}), we construct a simplicial set 
 $\ahd$. In Proposition~\ref{ahd_prop} below we will prove that $\ahd$ is a Kan complex. Subsections \ref{sj_subsubsection} and \ref{tind} construct the degeneracies of $\ahd$. 
 
\begin{nota} \label{ahn_defn}
	Let $\ca$ be the category from Notation~\ref{CAnotation}, and let $\dt^n$ be the poset from Notation~\ref{dtn_defn}. 
		Define $\ah_n'$ as the collection of contravariant functors $\sigma \colon \dt^n \lra \ca$ such that 		
			 $\sigma$ is a weak equivalence diagram (ie each map in the diagram is a weak equivalence). Define $\ah_n\subset \ah_n'$ as the functors that in addition are 
			 fibrant $\dtn$-diagrams. 
			
\end{nota}
Note that for all $\alpha \in \dt^n$, $\sigma(\alpha)$ is weakly equivalent to $A$. Also note that each $\ahn'$ and $\ahn$ are sets since both categories $\dtn$ and $\ca$ are small (by Lemma~\ref{createsniceprops}).

 
There is a simplicial structure on $\ahd'$ induced by the cosimplicial structure of $\widetilde{\Delta}^{\bullet}$ (see Proposition~\ref{cosimpl_rel_prop}). However $\ahd'$ does not look like a Kan complex and in the proofs later on we need that our simplices are fibrant diagrams. To get these properties we use $\ahd$ even though this forces us to construct the degeneracies by hand.

\begin{defn}  \label{disj_defn}
	Recall the functor $d^i \colon \dt^{n-1} \lra \dt^n$  from Definition~\ref{di_sk_defn}. 
	\begin{enumerate}
		\item[(i)] Define the face map $d_i \colon \ah_n \lra \ah_{n-1}, 0 \leq i \leq n$,  as $d_i (\sigma) := \sigma d^i$. 
		\item[(ii)]  The degeneracy maps are defined in Section~\ref{sj_subsubsection} below.  
	\end{enumerate} 
\end{defn} 

Identifying along the isomorphism $\dt^{n-1}\cong \pai\dtn $ we can write $\sigma d^i=\sigma|_{\pai\dtn}$.

\subsubsection{Definition of the degeneracies on $\ah_{\bullet}$.}  \label{sj_subsubsection}

 The simplicial structure on $\ahn'$ comes from the standard cosimplicial 
structure on $\dt^{\bullet}$. We will write these using precomposition notation. So if $\sigma\in \ahn'$ we will write 
$\sigma d^i$ for a face map and $\sigma s^i$ for a degeneracy. 

Recall if $\sigma$ is also in $\ahn$, then we use the same face map and define as above 

\begin{equation}\label{compd}
d_i\sigma=\sigma d^i,
\end{equation}

these are just the restriction to a subsimplex and so $d_i$ 
preserves $\ah_{\bullet}$. Also the $d_i$ satisfy the simplicial identities.
We will define the degeneracies of $\ahn$ by recursion. We will actually define all compositions of degeneracies applied to non-degenerate simplices, this will automatically take care of the simplicial identities between them, and of course contains the same information as all of the maps $s_i\colon \ahn\rightarrow \ah_{n+1}$.

We can consider (compositions of) degeneracies $s^K\colon \dt^l\rightarrow \dt^t$ as order preserving surjective maps between the sets $[l]$ and $[t]$. These are the compositions of single identifications between adjacent 
elements of the sets, written as $s^i$ in Definition \ref{di_sk_defn}, and the set of identifications used in the degeneracy determines the degeneracy $s^K$. Write the set of identifications in $s^K$ as $D$. 
In decompositions of $s^K=s^Is^J$ as $\dt^l\stackrel{s^J}{\rightarrow}\dt^s \stackrel{s^I}{\rightarrow}\dt^t$ 
(or $[l]\stackrel{s^J}{\rightarrow}[s] \stackrel{s^I}{\rightarrow}[t]$), since $s^J$ is surjective, $s^J$ is determines $s^I$. The possible  decompositions are determined by $s^J$ and are in bijection with 
 ${\mathcal P} (D)$. Slightly abusing notation we will ignore the bijection and write $s^J\in {\mathcal P} (D)$. Put the subset order on ${\mathcal P} (D)$. Observe that if $s^J\leq s^{J'}$ if and only if you can decompose $s^{J'}$ as $s^Is^J=s^{J'}$.  Note that decompositions, of $s^K$ are compatible with composition with other degeneracies, for example if $s^K=s^Is^J$ then $s^is^K=(s^is^I)s^J$ and 
 $s^Ks^i=s^I(s^Js^i)$.  So we get $D\rightarrow D'$ where $D'$ is the set of identifications in $s^is^K$ or $s^Ks^i$.  
 
 We have to be careful how we compose the two different face and degeneracy maps. 
It helps to remember that the $d^i$ and $s^i$ can be applied to all of the bigger simplicial set $\ahn'$. 
So we understand the $d_i$ and $s_i$ as applying first and keeping inside of $\ahn$ and the $d^i$ 
and $s^i$ as applying after. So $d_is_I\sigma s^J$ should be read $(d_is_I \sigma)s^J$ and would 
also be equal to $s_I\sigma d^is^J$. As is usually when turning right modules to left modules, the lower indices compose in the opposite direction. So for example if $s^J=s^is^j$ then $s_J=s_j s_i$. 

\subsubsection{The induction}\label{tind}

\begin{prop}
There is a simplicial structure on $\ah_{\bullet}$ whose face maps $d_i$ are the same as those in 
$\ah_{\bullet}'$.

\end{prop}

\begin{proof}
 
Recall in the decomposition of $s^K$,  $s^Is^J$, $s^J$ also determines the other map $s^I$, 

Assume for every $l<m$, non-degenerate $\sigma \in \ah_{|\sigma|-1}$, degeneracy 
$s^K\colon, \dt^l\rightarrow \dt^{|\sigma|-1}$ and decomposition $s^J\in {\mathcal P} (D)$, 
the following:

We have defined $\eta(s^J, \sigma)=s_{I}\sigma s^J\in  \ah'_{l}$ and natural transformations 
$\eta((s^J,s^{J'}),\sigma)\colon s_I\sigma s^J \rightarrow s_{I'}\sigma s^{J'}$ when $s^{J'}\subset s^J$ such that 
$\eta((s^{J'},s^{J''}),\sigma)\eta((s^J,s^{J'}),\sigma)=\eta((s^J,s^{J''}),\sigma)$ when  $s^{J''}\subset s^{J'}\subset s^J$. 

In other words we have a functor 
$\eta(\sigma)=\eta(\_, \sigma)\colon {\mathcal P}(D)^{\rm op} \rightarrow \ca^{\dt^l}$

If $s^J=\emptyset$ then we write $s_K\sigma=s_K\sigma s^{\emptyset}$. We assume that 
$s_K\sigma$ is fibrant. In other words that $s_K\sigma\in \ah_l$.

We make a few more assumptions:

1) $\eta(\sigma)$ is acyclic. More precisely  
$\eta((s^J, s^{J'}), \sigma)(\alpha)$ is a weak equivalence for every $\alpha \subset [l]$ (ie for every simplex in 
$ \dt^l$). 

2) $\eta(\sigma)$ is acyclic cofibrant in the top dimension. More precisely 
$\eta(\_, \sigma)([l])\colon {\mathcal P} (D)^{\rm op}\rightarrow \ca$ is a weak equivalence diagram that is cofibrant in the projective model category structure. 

By Corollary \ref{fibtest} this is equivalent to saying that for every $s^J\in  {\mathcal P} (D)$,
 $colim_{s^J\subsetneq s^I} \eta(s^I, \sigma) ([l])\rightarrow \eta(s^J,\sigma)([l])$ is an acyclic cofibration. 

3) The maps $\eta((s^J,s^{J'}),\sigma)$ are compatible with degeneracies. This means, 

$$\eta((s^J,s^{J'}),\sigma)s^i=\eta((s^Js^i,J's^i),\sigma ).$$

Unwinding we see this is an equality of maps $s_I\sigma s^J s^i \rightarrow s_{I'}\sigma s^{J'} s^i$ between functors. 

4) The maps $\eta((s^J,s^{J'}),\sigma)$ are compatible with the face 
maps $d^i$.

For face maps two things can happen. The $d^i$ can cancel one of the degeneracies in which case $s^Kd^i=s^{K'}$ and this means we get formulas of the form 

$$
\eta((s^J,s^{J'}),\sigma)d^i=\eta((s^M,s^{M'}),\sigma).
$$
Note there $s^M$ and $s^{M'}$ are the first part of the decompositions of $s^{K'}$. 
On the other hand if $d^i$ can commute with $s^K$ we have $s^Jd^i=d^ks^M$ and $s^Nd^k=d^js^L$
and   
we can write $d_j\sigma=s_N\tau$ with $\tau$ non-degenerate. 
Similarly for $J'$ we get $s^{I'}s^{J'}d^i=d^js^{L'}s^{N'}$. 
Then we have a formula

$$
\eta(s^J,s^{J'}),\sigma)d^i=
\eta((s^Ls^N,s^M),(s^{L'}s^N,s^{M'}),\tau)
$$

To prove the induction step 
for every sequence of degeneracies $s_K$ and non-degenerate simplex 
$\sigma\in \ahn$ such that $|K|+n=m$ we need to:

A) Construct $s_K\sigma$, and observe it satisfies the simplicial identities of the form $d_is_K\sigma={\rm something}$
and $s_K\sigma=s_Is_J \sigma$. 

B) For every decomposition $s^K=s^Is^J$ construct $s_I\sigma s^J\in \ah_m'$, satisfying 3) and 4) above. 

C) For every two decompositions $s_K=s_Is_J$ and $s_K=s_{I'}s_{J'}$ with $J\leq J'$ 
construct $\eta((s^J,s^{J'}),\sigma)$ satisfying 1) and 2) above. 

To start off observe that if $s^J\not=\emptyset$ in B) or  $s^{J'}\not=\emptyset$ in C) the objects  
$s_I\sigma s^J\in \ahn'$ and maps $\eta((s^J,s^{J'}),\sigma)$ are determined by 3) above. 

Next Equation \ref{compd} determines what $s_K\sigma|_{\partial^i \dtn}=s_K\sigma d^i$ must be and so we define them as such. Also 2) implies that we have maps 
$s_I\sigma s^J d^i\rightarrow s_K\sigma d^i$ which are compatible (with respect to the 
faces of $\partial \dtn$) and natural transformations 
$s_I\sigma s^J|_{\partial \dtn}\rightarrow s_K\sigma|_{\partial \dtn}$ which are also compatible with the 
maps  $\eta((s^J,s^{J'}),\sigma)$ when $s^{J'}\not=\emptyset$. 

Using Lemma \ref{same} this gives us  the lower triangle in the commutative diagram

\begin{equation}\label{gott}
\xymatrix{
\underset{\emptyset\not=J\in {\mathcal P}(D)^{\rm op}}{\text{colim}} s_I\sigma s^J([m]) \ar[drr] \ar[d] 
\ar@{-->}^{\phi}[rr] && s_K\sigma([m]) \ar@{-->}[d] \\
\underset{\emptyset\not=J\in {\mathcal P}(D)^{\rm op}}{\text{colim}}  \underset{\partial \dtn}{\lim} s_I\sigma s^J \ar[rr] 
&& \underset{\partial \dtn}{\lim} s_K\sigma 
}
\end{equation}

We get the dashed part of the diagram by factoring the diagonal map into an 
acyclic cofibration followed by a fibration. Again using Lemma \ref{same} and the closure conditions this defines $s_K\sigma\in \ah_m$, and by including into the colimits all 
maps $\eta((s^J,s^\emptyset),\sigma)$. 

Note that condition 1) says $\sigma s^K([m])\rightarrow s_K\sigma ([m])$ is an equivalence, so since for any $\tau\subset [m]$, the maps 
$\sigma s^K([m])\rightarrow \sigma s^K(\tau)$ and  
$\sigma s^K(\tau)\rightarrow s_K\sigma (\tau)$ are equivalences so is 
$s_K\sigma ([m])\rightarrow s_K\sigma (\tau)$. Thus all the maps in the diagram
$s_K\sigma$ are equivalences. 

Condition 1) is satisfied since the constructed map $\phi$ is an acyclic cofibration. By Corollary \ref{fibtest}
Condition 2) can be checked on simplices of ${\mathcal P}(D)$ one at a time. On the simplex $D$ it is the 
construction of in Diagram \ref{gott} and on other simplices it  condition 2) for lower dimensions together with 3). 

For Condition 3) observe that guaranteeing compatibility with degeneracies was what was used to construct the functor 
$\eta(\sigma)$ restricted to $\partial {\mathcal P}(D)$. Similarly guaranteeing compatibilities of the form
$d_is_K={\rm something}$ and Condition 4) 
was used to construct the functor restricted to $\partial \dtn$. Note all that all the somethings are of one degree lower and so are already defined by the induction hypothesis. The simplicial identities with only $d_i$ follow since $s_K\sigma$ are diagrams (they are elements of $\ahn'$). The simplicial identities $s_K\sigma=s_Is_J\sigma$ are baked into the construction since for any such decomposition the right hand side is defined by the right hand side. 

This completes the induction and the construction of the degeneracies making $\ah_{\bullet} $ into a simplicial set. 
\end{proof}

Notice that the construction of degeneracies in the above proposition is natural. We will not make use of this.

\subsection{Proving that $\ahd$ is a Kan complex}

\begin{prop}  \label{ahd_prop}
	The simplicial set $\ahd$  is a Kan complex.
\end{prop}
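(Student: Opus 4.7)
After unwinding Definitions~\ref{ahn_defn} and~\ref{disj_defn}, filling an arbitrary horn $\Lambda^n_k \to \ahd$ reduces to the following concrete statement: every fibrant diagram $F \colon \lnk \lra \ca$ whose morphisms are all weak equivalences extends to some $\sigma \in \ahn$ with $\sigma|_{\lnk} = F$. I would construct $\sigma$ in two stages, first filling in the missing face $\sigma([n]_k)$ and then the top object $\sigma([n])$. At each stage I would use the factorization~(\ref{mcale_eqn}) so that the new objects land in $\mcale^i_A \subseteq \ca$. The construction relies essentially on the classes $\mcald^i_A$ and $\mcale^i_A$ that were built into $\ca$ in Definition~\ref{ca_defn} precisely for this purpose.

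\emph{Stage 1.} Set $X := Q \underset{\lnk}{\lim} F$, which lies in some $\mcald^i_A \subseteq \ca$ and is cofibrant--fibrant by Proposition~\ref{ca_prop}. Since $\lnk$ is contractible and $F$ is fibrant with weak-equivalence morphisms, Proposition~\ref{we_prop} gives that each structure map $X \to F(\alpha)$, $\alpha \in \lnk$, is a weak equivalence. Let $L' := \underset{\alpha \in \pa\pa^k \dtn}{\lim} F(\alpha)$ and let $\phi \colon X \to L'$ be the natural map. Apply~(\ref{mcale_eqn}) to get
\[
Z := Z_{(X, F|_{\pa\pa^k \dtn}, \phi)} \in \mcale^i_A \subseteq \ca
\]
together with an acyclic cofibration $\tau \colon X \hookrightarrow Z$ and a fibration $\pb \colon Z \twoheadrightarrow L'$. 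Define $\sigma([n]_k) := Z$, with its structure maps to $F(\alpha)$ (for $\alpha \in \pa\pa^k \dtn$) obtained by post-composing $\pb$ with the projections out of $L'$. Because the composite $X \stackrel{\tau}{\hookrightarrow} Z \to F(\alpha)$ agrees with the direct map $X \to F(\alpha)$ (both factor through $\phi$), $2$-out-of-$3$ forces $Z \to F(\alpha)$ to be a weak equivalence.

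\emph{Stage 2.} The diagram $F' \colon \pdtn \to \ca$ obtained from $F$ by adjoining $Z$ at $[n]_k$ is fibrant (the new matching map is $\pb$) with all morphisms weak equivalences. Since $X \to \underset{\lnk}{\lim} F \to L'$ and $X \stackrel{\tau}{\to} Z \to L'$ both equal $\phi$, the universal property of the pullback description $\underset{\pdtn}{\lim} F' \cong \underset{\lnk}{\lim} F \times_{L'} Z$ yields a canonical $\phi' \colon X \to \underset{\pdtn}{\lim} F'$. Apply~(\ref{mcale_eqn}) once more to define
\[
\sigma([n]) := Z_{(X, F', \phi')} \in \mcale^j_A \subseteq \ca,
\]
whose matching map at $[n]$ is the resulting fibration. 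A second $2$-out-of-$3$ argument, using that $X \to F'(\alpha)$ is a weak equivalence for every $\alpha \in \pdtn$ (from Stage~1 for $\alpha \in \lnk$, and directly from $\tau$ for $\alpha = [n]_k$), shows that each induced $\sigma([n]) \to \sigma(\alpha)$ is a weak equivalence. The assembled $\sigma$ then lies in $\ahn$ and extends $F$.

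The main obstacle, which forces the two-step strategy, is that one cannot simply take $\sigma([n]_k) = \underset{\lnk}{\lim} F$ or define $\sigma([n])$ via any cone over $\pdtn$: the projections out of such limits are weak equivalences only over contractible indexing categories like $\lnk$, whereas $\pdtn \simeq S^{n-1}$. The trick is to route everything through the single cofibrant object $X$, which \emph{does} admit weak-equivalence projections to every $F(\alpha)$, and to convert these into the required matching fibrations via~(\ref{mcale_eqn}) --- which is exactly the role played by the classes $\mcale^i_A$ and $\mcald^i_A$ inside $\ca$.
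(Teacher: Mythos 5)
Your proof is correct and is essentially the paper's own argument: the paper assembles the same fibrant $\lnk$-diagram from the horn data, places $Q \underset{\lnk}{\lim} F$ (an object of $\mcald^i_A$, with Proposition~\ref{we_prop} supplying the weak equivalences) at both $[n]_k$ and $[n]$ with the identity map between them, and then obtains the filler by applying the fibrant replacement functor $\mcalr$, with Proposition~\ref{mcalr_prop} ensuring the already-fibrant faces $\sigma_i$ are left unchanged. Your two explicit applications of the factorization~(\ref{mcale_eqn}) simply carry out that fibrant replacement by hand, so the route and the key ingredients ($\mcald^i_A$, $\mcale^i_A$, Proposition~\ref{we_prop}) coincide.
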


\begin{proof}
	Let $n \geq 0$, and let $k \in \{0, \cdots, n\}$. Consider $n$ $(n-1)$-simplices $\sigma_0, \cdots, \widehat{\sigma}_k, \cdots,  \sigma_n$ of $\ahd$ such that 
	$
	d_i (\sigma_j) = d_{j-1} (\sigma_i), i < j,$  and $i, j \neq k.
	$
	Our goal is to construct an $n$-simplex $\sigma \in \ah_n$ such that $d_i\sigma = \sigma_i$ for all $i \neq k$. We first need to construct an intermediate functor $\overline{\sigma} \colon \dtn \lra \ca$. 
	
	Recall the 
	poset $\lnk$ from Notation~\ref{dtn_defn}.  For $i \in [n], i \neq k,$ define  $\overline{\sigma} \colon \lnk \lra \ca$ as $\overline{\sigma} (\alpha) := \sigma_i (\alpha)$ for $\alpha \in \pa^i \dtn \cong \dt^{n-1}$.  	
	It is straightforward to see that this is well defined on the intersection $\pai \dtn \cap \paj \dtn$. Note this is fibrant by Corollary \ref{fibtest} since each $\sigma_i$ is. 
	
	Now define  
	\[
	\overline{\sigma}([n]) :=Q \underset{\alpha \in \lnk}{\lim} \overline{\sigma}(\alpha), \quad \overline{\sigma}([n]_k):= \overline{\sigma}([n]), \quad \text{and} \quad \overline{\sigma}(d^k):= id,
	\]
	where $d^k \colon [n]_k \lra [n]$ is a morphism of $\dtn$. For $\alpha \in \lnk$, and 
	$d^{\alpha[n]} \colon \alpha \lra [n]$ a morphism of $\dtn$, define $\overline{\sigma}(d^{\alpha[n]})$ as  the composition 
	$$
	Q \underset{\alpha \in \lnk}{\lim} \overline{\sigma}(\alpha)\rightarrow  \underset{\alpha \in \lnk}{\lim} \overline{\sigma}(\alpha) \stackrel{p_{\alpha}}{\rightarrow} \overline{\sigma}(\alpha)
	$$. 
	Where $p_{\alpha}$ is the projection. 
	By Proposition \ref{we_prop} $\overline{\sigma}$ is a weak equivalence diagram and by Lemma
	\ref{createsniceprops} (closure condition (E)) and Notation \ref{CAnotation}
	all its values are in $\ca$ so we get $\overline{\sigma} \colon \dtn \lra \ca$ such
	that $\overline \sigma|_{\lnk}$ is fibrant. Then by Proposition \ref{mcalr_prop}
 $\sigma=\mcalr_{\lnk}(\overline \sigma)\colon \dtn \lra \ca$ is fibrant with 
 $ \sigma|_{\lnk}=\overline \sigma|_{\lnk}$ and all maps still weak equivalences. Hence 
	$\sigma\in \ahn$ and $d_i \sigma = \sigma_i$ for all $i \neq k$. This proves the proposition.  
\end{proof}

\section{The functor categories $\fautm$ and $\futma$ }   \label{fautm_futma_section}

From now on, we let $\tm$ denote a triangulation of $M$.  Associated to the triangulation we get a simplicial complex consisting of the (in this case geometric) simplices of the triangulation, which we denote by $P(\tm)$. As per our convention this also also denotes the associated poset (and its category). We find it helpful to separate out $P(\tm)$ from the more topological $\tm$, even though they have essentially the same set of simplices. 

Let $\ca \subseteq \mcalm$  be  the small subcategory constructed in  Section~\ref{ca_subsection}. In 
this section we recall an important poset $\utm$ and introduce two categories: $\fautm$ and $\futma$. The first one is the category of isotopy functors $\utm \lra \mcalm$ that send every object to an object weakly equivalent to $A$, while the second is the category of isotopy  functors from $\utm$ to $\ca$. By the definitions there is an inclusion functor $\phi  \colon \futma \hra \fautm$. The goal of this section is to prove (\ref{res_eqn4}) or Proposition~\ref{fuca_prop} below, which says that the localization of $\phi$ with respect to weak equivalences is an equivalence of categories. We begin with the definition of $\utm$.

\subsection{The  poset $\utm$}

The poset $\utm$ was introduced by the authors in \cite[Section 4.1]{paul_don17}. We recall its definition, and refer the reader to \cite[Section 4.1]{paul_don17} for more explanation. For the definition of barycentric subdivision see \cite[Chapter 2, page 119]{hatcher02}. 

First take two barycentric subdivisions $BB\tm$ of $\tm$, next for a simplex $\sigma$ of $\tm$ look at the star neighborhood of $\sigma$ in $BB\tm$. This consists of all simplices that share a vertex with $\sigma$. Since we have subdivided $\sigma$ in $BB\tm$ consists of many simplices. 

Note:

1) The star neighborhood of $\sigma$ is the same as the union of the star neighborhoods of all the vertices in the subdivided $\sigma$. 

2) Because we have subdived twice if we take adjacent vertices in $\tm$, then their stars neighborhoods in $BB\tm$ do not intersect, and subdividing twice is needed since if we only subdivide once star neighborhoods of adjacent vertices will intersect. 

So we define a category:

\begin{defn} \label{utm_defn}
For a simplex $\sigma \in \tm$ define $U_{\sigma}$ as the interior of the star of $\sigma$ in $BB\tm$. Putting these into a category $\utm$, 
	an object of $\utm$ is defined to be $U_{\sigma}, \sigma \in \tm$. There is a morphism $U_{\sigma} \lra U_{\sigma'}$ if and only if $\sigma$ is a face of $\sigma'$. In other words, morphisms of $\utm$ are just inclusions. 
\end{defn} 

\begin{rmk} \label{utm_pt_rmk}
A geometric $n$-simplex is denoted $\dn$. This can be thought of as a subspace of $\mathbb{R}^{n+1}$. These are used when building the triangulation $\tm$ of our manifold $M$. Once we consider the simplicial complex associated to $\tm$ and 
its poset $P(\tm)$, we get that one has a canonical isomorphism  $\utm \stackrel{\cong}{\lra} P(\tm), U_{\sigma} \mapsto \sigma$. 
	When $\tm = \dn$, there is an isomorphism $\mcalu(\dn) \cong \dtn$, where $\dtn$ is the poset from Definition~\ref{dtn_defn}. 
\end{rmk} 

\begin{rmk} \label{utm_rmk}
	\begin{enumerate}
		\item[(i)] It is clear that every morphism of $\utm$ is a composition of $d^i$'s, where 
		\begin{eqnarray} \label{di_eqn}
		d^i \colon U_{\langle v_{a_0}, \cdots, \widehat{v}_{a_i}, \cdots, v_{a_n}\rangle} \hra U_{\langle v_{a_0}, \cdots, v_{a_n}\rangle}
		\end{eqnarray}
		\item[(ii)] It is also clear that every morphism of $\utm$ is an isotopy equivalence since the inclusion of one open ball of $M$ inside another one  is always an isotopy equivalence \cite[Chapter 8]{hirsch76}. 
	\end{enumerate} 
\end{rmk}

\subsection{The functor categories $\fautm$ and $\fabutm$}\label{2fun}

Here we prove (\ref{res_eqn3}) or Proposition~\ref{fum_butm_prop} below. We begin with a few definitions.

\begin{defn} \label{butm_defn}
	 Define $\butm$ to be the collection of all subsets $B$ of $M$ diffeomorphic to an open ball such that $B$ is contained in some $U_{\sigma} \in \utm$. 
\end{defn}

Observe $\butm$ is a basis for the topology of $M$ that contains $\utm$. Often $\butm$ will be thought of as 
the poset whose objects are $B \in \butm$, and whose morphisms are inclusions. 

We now define what is called an isotopy functor. First, recall that $\om$ is the category whose objects are open subsets of $M$ and morphisms are inclusions.  Recall Section \ref{modcat} for the terms weak equivalence and weakly equivalent.

\begin{defn} If $\mcals$ is a subcategory of $\om$, a functor $F \colon \mcals \lra \mcalm$ is an {\em isotopy functor} if it is objectwise fibrant and if it sends isotopy equivalences to weak equivalences.  
\end{defn}

Note that if the subcategory $\mcals$ consists only of balls, then it follows from Remark \ref{utm_rmk} that isotopy functors are the same as weak equivalence diagrams.

\begin{defn} \label{fum_defn}
	Let $\mcals \subseteq \om$ be a subcategory. 
	Define $\mcalf_A(\mcals; \mcalm)$ to be the category whose objects are isotopy functors 
	$F \colon \mcals \lra \mcalm$ such that for every $U \in \mcals$, $F(U)$ is weakly equivalent to $A$. As usual morphisms are natural transformations. 
\end{defn}

We are mostly interested in the case when $\mcals = \utm$ or  $\mcals=\butm$.

\begin{defn} \label{defn:we_categories}
	Let $\mcalc$ and $\mcald$ be categories both equipped with a class of morphisms called weak equivalences. 
	 \begin{enumerate}
	 	\item[(i)] We say that two functors  $F, G \colon \mcalc \lra \mcald$ are  \emph{weakly equivalent} 
	 	if they are connected by a zigzag of objectwise weak equivalences. 
	 	\item[(ii)] A functor $F \colon \mcalc \lra \mcald$ is said to be a \emph{weak equivalence} if it satisfies the following two conditions.
	 	\begin{enumerate}
	 		\item[(a)] $F$ preserves weak equivalences.
	 		\item[(b)] There is a functor $G \colon \mcald \lra \mcalc$ such that $FG$ and $GF$ are both weakly equivalent to the identity. The functor $G$ is also required to preserve weak equivalences. 
	 	\end{enumerate} 
	 	\item[(iii)] We say that $\mcalc$ is weakly equivalent  to $\mcald$, and we denote $\mcalc \simeq \mcald$, if there exists a zigzag of weak equivalences between $\mcalc$ and $\mcald$. 
	 \end{enumerate} 
\end{defn}
 

\begin{prop} \label{fum_butm_prop}
	The category $\fautm$ is weakly equivalent 
	 to the category $\fabutm$. 
	That is,
	\[
	\fabutm \simeq \fautm. 
	\]
\end{prop}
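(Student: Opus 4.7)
The plan is to exhibit the weak equivalence by constructing two functors: the obvious restriction $r \colon \fabutm \lra \fautm$ given by $F \mapsto F|_{\utm}$, and a homotopical extension $e \colon \fautm \lra \fabutm$ in the opposite direction. The restriction $r$ is well-defined because every $U_\sigma \in \utm$ is diffeomorphic to an open ball (so $F(U_\sigma)$ is weakly equivalent to $A$ for any $F \in \fabutm$) and isotopy functoriality is clearly preserved under restriction along the full subcategory inclusion $\utm \hra \butm$.

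For the extension, given $F \in \fautm$ and $B \in \butm$, I would consider the subposet
\[
\mcalu(B) := \{U_\sigma \in \utm \mid B \subseteq U_\sigma\} \subseteq \utm,
\]
which is nonempty by Definition~\ref{butm_defn} and which, by the very-good-cover property of $\utm$, has contractible nerve. I would then set
\[
e(F)(B) := \underset{U_\sigma \in \mcalu(B)}{\textup{holim}} F(U_\sigma),
\]
strictified (e.g.\ by a pointwise fibrant replacement) so that $e(F) \colon \butm \lra \mcalm$ is a genuine functor. Because every morphism of $\utm$ is an isotopy equivalence between open balls (Remark~\ref{utm_rmk}), every map appearing in the diagram $F|_{\mcalu(B)}$ is a weak equivalence; combined with the contractibility of the indexing category this shows that the canonical map from $e(F)(B)$ to each $F(U_\sigma)$ is a weak equivalence. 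In particular $e(F)(B) \wequ A$, and $e(F)$ is an isotopy functor on $\butm$, so $e(F) \in \fabutm$ as required. The construction is clearly functorial in $F$.

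It remains to produce the two zigzags of natural weak equivalences required by the definition of weak equivalence of categories. First, if $B = U_\tau \in \utm$, then $U_\tau$ is a terminal (in fact minimal) element of $\mcalu(U_\tau)$, so the structure map gives a natural weak equivalence $re(F)(U_\tau) = e(F)(U_\tau) \stackrel{\sim}{\lra} F(U_\tau)$, i.e.\ $re \wequ \id_{\fautm}$. Second, for $F \in \fabutm$, the inclusions $U_\sigma \supseteq B$ induce maps $F(U_\sigma) \lra F(B)$ which are weak equivalences (isotopy equivalences between balls), and these assemble through the universal property into a natural map $er(F)(B) \stackrel{\sim}{\lra} F(B)$ that is a weak equivalence by the same contractibility-plus-weak-equivalence argument, giving $er \wequ \id_{\fabutm}$.

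The main obstacle is the usual coherence issue attached to homotopy limits: one must ensure that $B \mapsto e(F)(B)$ is strictly functorial and that the comparison maps above are strictly natural, not merely homotopy-natural. This is handled by fixing, once and for all, functorial fibrant replacements in $\mcalm$ and using a strict point-set model of $\textup{holim}$ (e.g.\ a totalization construction), exactly as in \cite[Proposition 4.7]{paul_don17}; the verification that the resulting $e(F)$ still satisfies the isotopy condition and that the two zigzags above fit into the framework of \cite[Definition 6.3]{paul_don17-2} is then formal.
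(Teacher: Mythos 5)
Your restriction functor agrees with the paper's, but your extension $e$ has a genuine variance problem. Objects of $\fabutm$ and $\fautm$ are \emph{contravariant} (isotopy) functors, per the paper's conventions. With $e(F)(B) := \operatorname{holim}_{U_\sigma \in \mcalu(B)} F(U_\sigma)$, an inclusion $B \subseteq B'$ gives $\mcalu(B') \subseteq \mcalu(B)$, and restriction of diagrams along this inclusion produces a canonical map $e(F)(B) \lra e(F)(B')$ --- i.e.\ your assignment is \emph{covariantly} functorial in $B$, and there is no canonical map in the direction $e(F)(B') \lra e(F)(B)$ that a contravariant functor on $\butm$ requires. The same confusion appears in your comparison map: the maps $F(U_\sigma) \lra F(B)$ form a \emph{cocone} on the diagram, so by the universal property they assemble into a map out of a (ho)colimit, not a map $er(F)(B) \lra F(B)$ out of the homotopy limit. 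So as written $e(F)$ is not an object of $\fabutm$ and the second zigzag does not exist. The construction is fixable: replace the homotopy limit by the homotopy \emph{colimit} over $\{U_\sigma \in \utm \mid B \subseteq U_\sigma\}$ (the homotopy left Kan extension along $\utm \hra \butm$); this has the correct variance, and since every map in the diagram is a weak equivalence and the index poset is contractible, each structure map $F(U_\sigma) \lra e(F)(B)$ is still a weak equivalence, so $e(F)(B) \wequ A$ and the two comparison zigzags go through. You would also then need to actually justify the contractibility of $\mcalu(B)$, which you assert but do not prove.

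For comparison, the paper avoids all of this: axiom $(C_3)$ of the definition of a very good cover (\cite[Definition 4.1]{paul_don17}) supplies, for each $B \in \butm$, a canonical $U_{\sigma_B} \in \utm$, compatibly with inclusions, and the extension is simply $\psi(F)(B) := F(U_{\sigma_B})$, with $\psi(\beta)(B) := \beta[U_{\sigma_B}]$ on morphisms. This strict, choice-based extension needs no homotopy (co)limits, no fibrant replacements, and no contractibility argument, and it gives $\varphi\psi = \mathrm{id}$ on the nose together with a natural weak equivalence $\psi\varphi \simeq \mathrm{id}$, since the inclusions $U_\sigma \subseteq U_{\sigma_{U_\sigma}}$ are isotopy equivalences and $F$ is an isotopy functor.
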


\begin{proof}	
 Define $\varphi \colon \fabutm \lra \fautm$ as the restriction to $\utm$. That is,  $\varphi(G):= G|\utm$.  To define a functor $\psi$ in the other way, let $F \colon \utm \lra \mcalm$ be an object of $\fautm$. 
 Take any $B\in \butm$. It is fully inside one of the $U_\sigma$ and we can take the smallest simplex $\sigma_B$ such that $B\in U_{\sigma_B}$ define $\psi(F)(B):= F(U_{\sigma_B})$.  If $\beta \colon F \lra F'$ is a morphism of $\fautm$, define $\psi(\eta)(B):= \beta[U_{\sigma_B}]$. It is straightforward to see that $\varphi$ and $\psi$ preserve weak equivalences. It is also straightforward to check that $\phi \psi = id$ and $\psi \phi \simeq id$. This proves the proposition.  
\end{proof}

\subsection{Proving that $\fautm  \ \simeq \ \futma$}   \label{fautm_futma_subsection}

In this section we again use the injective model category structure on $\mcalm^{\utm}$.
The goal here is to prove (\ref{res_eqn4})  or Proposition~\ref{fuca_prop} below, which says that the localization of $\fautm$ is equivalent  to the localization of a certain small category $\futma$ that we now define.

\sloppy

\begin{defn} \label{fuca_defn} 
	\begin{enumerate}
		\item [(i)] Define $\futma$ as the category of isotopy functors from $\utm$ to $\ca$. 
		To simplify the notation, we will often write $\mcalk$ for $\fuca$ in this section. That is, $\mcalk := \fuca.$ Weak equivalences of $\mcalk$ are natural transformations which are objectwise weak equivalences. We denote the class of weak equivalences of $\mcalk$ by $\mcalw_{\mcalk}$. 
		\item[(ii)] Define $\mcall := \fautm$ (see Definition~\ref{fum_defn}). We denote the class of weak equivalences  of $\mcall$ by $\mcalw_{\mcall}$ . 
	\end{enumerate}
\end{defn}

By Definition~\ref{fuca_defn} and since $\ca$ is a subcategory of $\mcalm$, one has $\mcalk \subseteq \mcall$. 

\begin{rmk} Recall we use the injective model category structure on $\mcalm^{\utm}$. 
	The subcategory $\mcalk \subseteq \mcalm^{\utm}$ might not be a model category in its own right as it might not be closed under factorizations or under taking small limits. The same remark applies to $\mcall$. 
\end{rmk}

Nevertheless $\mcalk$ and $\mcall$ are closed under certain things described in  Proposition~\ref{mcalrb_prop} below. 
Before we state it, we need to introduce a functor. Consider the fibrant 
replacement functor from Section~\ref{mcalr_subsection}. 
According to Remark~\ref{utm_pt_rmk}, one has an isomorphism $\utm \cong P(\tm)$. This enables us to regard
$\mcalr_{\emptyset, P(\tm)}$ as a functor $ \ca^{\utm} \lra \ca^{\utm}$. 
We denote
\begin{eqnarray} \label{mcalrb_eqn}
\mcalrb =\mcalr_{\emptyset, P(\tm)}\colon \ca^{\utm} \lra \ca^{\utm}.  
\end{eqnarray}

\begin{prop} \label{mcalrb_prop}
	\begin{enumerate}
		\item[(i)] For every $F \in \mcalk$, $\mcalrb F$ belongs to $\mcalk$. 
		\item[(ii)]  The category $\mcall$ is  closed under taking cofibrant and fibrant replacements. 
		\item[(iii)] For every $F \in \mcalk$ (respectively $G \in \mcall$) there exists a  cylinder object $F \times I$ in $\mcalk$ (respectively $G \times I$ in $\mcall$) 
	\end{enumerate}  
\end{prop}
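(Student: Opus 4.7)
The plan is to handle the three assertions in order, each time relying on the fact that the category $\ca$ was engineered in Definition~\ref{ca_defn} to contain the specific auxiliary objects that naturally appear in these constructions, and then propagating the weak equivalence / isotopy data by two-out-of-three.

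For part (i), I would verify two things about $\mcalrb F$: that it takes values in $\ca$, and that it sends every morphism of $\utm$ to a weak equivalence. The first follows by direct inspection of the inductive construction of $\mcalr \colon \ca^{\dtn} \lra \ca^{\dtn}$ in Section~\ref{mcalr_subsection}: the only new objects introduced at each stage are of the form $Z_{(X, F, \phi)}$ from~(\ref{mcale_eqn}), and by Definition~\ref{ca_defn} these lie in $\ca$. Then $\mcalrb$ glues these pieces together over the simplices of $\tm$ and therefore still lands in $\ca^{\utm}$. For the isotopy property, I would observe that the canonical transformation $F \lra \mcalrb F$ is an objectwise weak equivalence (induction on dimension, using that the factorization in~(\ref{mcale_eqn}) has a trivial cofibration component). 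Given a morphism $d \colon U_\sigma \hookrightarrow U_\tau$ in $\utm$, the naturality square together with the hypothesis that $F$ is an isotopy functor gives, by two-out-of-three, that $\mcalrb F(d)$ is a weak equivalence.

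For part (ii), I would pass through the isomorphism $\utm \cong P(\tm)$ of Remark~\ref{utm_pt_rmk} and use the injective model structure on $\mcalm^{P(\tm)}$ from Proposition~\ref{model_prop}. Let $G \in \mcall$, and let $QG \stackrel{\sim}{\lra} G$ and $G \stackrel{\sim}{\lra} RG$ be its cofibrant and fibrant replacements in $\mcalm^{\utm}$. Both replacement maps are objectwise weak equivalences, so $QG(U)$ and $RG(U)$ remain weakly equivalent to $A$ for each $U \in \utm$. Two-out-of-three applied to the naturality squares shows that $QG$ and $RG$ again send morphisms of $\utm$ to weak equivalences, i.e.\ remain isotopy functors. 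Hence $QG, RG \in \mcall$.

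For part (iii), the cylinder object in $\mcall$ is the standard one: factor the codiagonal $G \sqcup G \lra G$ functorially in $\mcalm^{\utm}$ as a cofibration followed by a trivial fibration to obtain $G \sqcup G \hookrightarrow G \times I \stackrel{\sim}{\lra} G$. The intermediate object lies in $\mcall$ by the same two-out-of-three reasoning as above. The genuine obstacle is the cylinder object in $\mcalk$, since the values must lie in the \emph{small} subcategory $\ca$. My plan is to construct it objectwise using the factorization in~(\ref{mcalh_eqn}) from the definition of $\mcalh^i$: set
\[
(F \times I)(U) := Z_{F(U)},
\]
with structure maps obtained from $F(d) \colon F(U) \lra F(V)$ by functoriality of the factorization. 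Definition~\ref{ca_defn} was set up so that $Z_X \in \ca$ whenever $X \in \ca$, so $F \times I$ takes values in $\ca$. The map $F \times I \stackrel{\sim}{\lra} F$ is an objectwise weak equivalence, and two-out-of-three then shows $F \times I$ is an isotopy functor, i.e.\ lies in $\mcalk$; the cofibration $F \sqcup F \hookrightarrow F \times I$ is objectwise by construction. The main thing to be careful about throughout is precisely this control over where the objects live, which is the whole point of the elaborate definition of $\ca$.
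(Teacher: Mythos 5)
Your proposal is correct and follows essentially the same route as the paper: parts (i) and (ii) are the routine verifications the paper leaves to the definitions (values stay in $\ca$ respectively weakly equivalent to $A$, plus two-out-of-three for the isotopy condition), and for (iii) you use exactly the paper's objectwise construction $(F\times I)(U):=Z_{F(U)}$ from (\ref{mcalh_eqn}), which is a cylinder object because cofibrations and weak equivalences in $\mcalm^{\utm}$ are objectwise. Your only deviation is building the cylinder in $\mcall$ via the functorial factorization in the diagram category rather than objectwise, which is an equally valid minor variant.
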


So it makes sense to talk about homotopy in $\mcalk$ and $\mcall$ provided that the source is cofibrant and the target is fibrant.

\begin{proof}[Proof of Proposition~\ref{mcalrb_prop}]
The statements about $\mcall$ follow since the weak equivalences in $\mcalm^{\utm}$ are pointwise weak equivalences. 
Parts (i) follows from Proposition \ref{mcalr_prop}. 	

Regarding (iii),  let $F \in \mcalk$ and let $U \in \utm$. Recalling the factorization notation (\ref{2fact}) and the closure condtions from \ref{clop}, define  $F \times I \colon \utm \lra \ca$ on objects as  $(F \times I)(U):= W_g$ where $g$ is the fold map $F(U)\coprod F(U)\rightarrow F(U)$,  and in the obvious way by naturality on morphisms. Since weak equivalences and cofibrations are both objectwise, it follows that $F \times I$ is a cylinder object for $F$. Observe $F \times I$ belongs to $\mcalk$.  
\end{proof}

Before we state and prove the main result (Proposition~\ref{fuca_prop}) of this section, we need three preparatory lemmas. We will use the notation and terminology from Section~\ref{local_category_subsection} and also Remark 
\ref{utm_pt_rmk}.

\begin{lem} \label{esur_lem}
	Consider the categories $\mcalk$ and $\mcall$ from Definition~\ref{fuca_defn}. Then the functor 
	\begin{eqnarray} \label{phi_eqn}
	\phi \colon \mcalk[\mcalw_{\mcalk}^{-1}] \lra \mcall[\mcalw_{\mcall}^{-1}], \quad F \mapsto F,
	\end{eqnarray} 
	induced by the inclusion functor $\mcalk \hra \mcall$  is essentially surjective. 
\end{lem}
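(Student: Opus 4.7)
The strategy is to construct, for a given $G \in \mcall$, an $F \in \mcalk$ together with an objectwise weak equivalence $\eta \colon F \to G$ in $\mcall$; such an $\eta$ exhibits $\phi(F) = F \cong G$ in $\mcall[\mcalw_{\mcall}^{-1}]$, yielding essential surjectivity. Using Proposition~\ref{mcalrb_prop}(ii), we may replace $G$ by a fibrant-cofibrant object of $\mcall$ from the start, so henceforth assume $G$ is fibrant-cofibrant; each matching map $G(U_\alpha) \twoheadrightarrow \underset{\pdtn}{\lim}\, G|\pdtn$ is then a fibration (but not in general a weak equivalence, since $\pdtn$ is not a contractible indexing category). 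The construction of $F$ and $\eta$ proceeds by induction on the dimension of simplices of $\tm$, using Remark~\ref{utm_pt_rmk} to identify $\utm$ with $P(\tm)$. For each vertex $v$ set $F(U_v) := QRA \in \ca$ and choose any weak equivalence $\eta_v \colon QRA \to G(U_v)$, which exists since $QRA$ is cofibrant, $G(U_v)$ is fibrant, and both are weakly equivalent to $A$.

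The inductive step at an $n$-simplex $\alpha$ uses essentially the auxiliary objects $Z_{(X, H, \phi)}$ placed into $\ca$ by Definition~\ref{ca_defn}. Suppose $F$ has been defined on the boundary $\pdtn$-diagram so that $F|\pdtn$ is a fibrant diagram in $\ca$ and $\eta|\pdtn$ is an objectwise weak equivalence onto $G|\pdtn$. Then $\eta|\pdtn$ induces a weak equivalence $\underset{\pdtn}{\lim}\, F|\pdtn \xrightarrow{\sim} \underset{\pdtn}{\lim}\, G|\pdtn$ on matching objects, both of which are fibrant by Proposition~\ref{fibrant_prop}. Pick a weak equivalence $\psi \colon QRA \to G(U_\alpha)$, and by a standard factorization-and-lifting argument (factor the weak equivalence between matching objects as a trivial cofibration followed by a trivial fibration, lift $(\text{matching}_G)\circ\psi$ through the trivial fibration, and adjust $\psi$ by a homotopy if needed to land back in $\underset{\pdtn}{\lim}\, F|\pdtn$) produce $\phi \colon QRA \to \underset{\pdtn}{\lim}\, F|\pdtn$ satisfying
\[
(\underset{\pdtn}{\lim}\, \eta) \circ \phi \;=\; (\text{matching}_G) \circ \psi.
\]
Define $F(U_\alpha) := Z_{(QRA,\, F|\pdtn,\, \phi)}$ via the factorization~(\ref{mcale_eqn}), which provides a trivial cofibration $\tau \colon QRA \xrightarrow{\sim} F(U_\alpha)$ and a fibration $\pb \colon F(U_\alpha) \twoheadrightarrow \underset{\pdtn}{\lim}\, F|\pdtn$, with $F(U_\alpha) \in \ca$ by construction.

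The face maps $F(U_\alpha) \to F(U_{\alpha \setminus a_i})$, namely $\pb$ composed with the canonical projections, are weak equivalences: each such composite, precomposed with the weak equivalence $\tau$, equals the $i$-th component of $\phi$, which in turn is weakly equivalent (via the defining relation above) to the composite $QRA \xrightarrow{\psi} G(U_\alpha) \to G(U_{\alpha \setminus a_i}) \xleftarrow{\sim} F(U_{\alpha \setminus a_i})$, a composition of weak equivalences. Hence $F$ remains an isotopy functor with values in $\ca$. To extend $\eta$, lift the commutative square with edges $\psi$, $\tau$, $\text{matching}_G$, and $(\underset{\pdtn}{\lim}\, \eta) \circ \pb$ (commutative by the defining equation of $\phi$) using that $\tau$ is a trivial cofibration and $\text{matching}_G$ is a fibration; this produces $\eta_\alpha \colon F(U_\alpha) \to G(U_\alpha)$, natural with respect to the face morphisms and a weak equivalence by two-out-of-three applied to $\eta_\alpha \circ \tau = \psi$. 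The main obstacle is arranging $\phi$ so that the defining equation holds \emph{strictly} rather than merely up to homotopy, since otherwise $\eta$ is only a natural transformation up to homotopy; the somewhat elaborate structure of $\ca$ from Definition~\ref{ca_defn} is precisely what makes these coherent choices available at every inductive stage.
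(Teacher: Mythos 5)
Your proposal is correct and follows essentially the same strategy as the paper's proof: an induction over the simplices of $\tm$ that places $QRA$ at the vertices, uses the factorization objects $Z_{(X,F,\phi)}$ of $\ca$ at each higher simplex so that the matching map becomes a fibration while staying inside $\ca$, and repairs the only-up-to-homotopy compatibility by a lifting argument before extending the comparison map. The differences are organizational rather than substantive: you build the weak equivalence from the new functor in $\mcalk$ into a (Reedy) fibrant replacement of the given $G$, rectifying strictness by homotopy lifting along the matching fibration of $G$, whereas the paper replaces the given functor by $QRF$, maps it into the new functor, and fixes the homotopy-commutative square with a homotopy inverse, a cylinder object and a lift through $\pb$; likewise you work simplex-by-simplex over all of $\tm$ instead of first reducing to $\Delta^n$ and gluing.
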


\begin{proof}
	Let $n=\text{dim} M$ as above. Since the simplicial complex $\tm$ can be built up by gluing together the $\dn$'s,  it is  enough to prove the lemma when $\tm= \Delta^n$. Set $\mcalk_n = \mcalf(\mcalu(\Delta^n); \ca)$ and $\mcall_n = \mcalf_A(\mcalu(\Delta^n); \mcalm)$.  The idea of the proof is to proceed by induction on $n$ by showing that for all $n \geq 0$, for all $F \in \mcall_n[\mcalw_{\mcall_n}^{-1}]$, there exist $\gb \in \mcalk_n[\mcalw_{\mcalk_n}^{-1}]$ and a zigzag of weak equivalences $\xymatrix{F \ar[r]^-{\sim} & RF &  QRF \ar[l]_-{\sim} \ar[r]^-{\sim}_-{\beb}  & \gb}.$
	
	For $n=0$, the standard geometric simplex $\Delta^n$ has only one vertex, say $v$. So $\mcalu (\Delta^0) = \{U_v\}$.  Let $F \colon \mcalu(\Delta^0) \lra \mcalm$ be an object of $\mcall_0[\mcalw_{\mcall_0}^{-1}]$. Define $\gb \colon \mcalu (\Delta^0) \lra \ca$ as $\gb(U_v):= QRA$. By the definition of $\mcall_0$, the object $F(U_v)$ is weakly equivalent to $QRA$. So $QRF(U_v)$ is also weakly equivalent to $QRA$, that is, there exists a zigzag  of weak equivalences
	\begin{eqnarray} \label{zz_eqn}
	\xymatrix{QRF(U_v) & A_1 \ar[l]_-{\sim}^-{f_0} \ar[r]^-{\sim}_-{f_1} & A_2 & \cdots \ar[l]_-{\sim} \ar[r]^-{\sim} & A_{s-1} & A_s \ar[l]_-{\sim}^-{f_{s-1}} \ar[r]^-{\sim}_-{f_s} & QRA.}
	\end{eqnarray}
	Using standard techniques from model categories and the fact that the objects $QRF(U_v)$ and $QRA$ are both fibrant and cofibrant, one can replace (\ref{zz_eqn}) by a direct morphism $\beb[U_v] \colon QRF(U_v) \stackrel{\sim}{\lra} QRA= \gb(U_v)$. This proves the base case.

	Assume that the statement  is true for all $k \leq n-1$, and  let $F \in \mcall_n[\mcalw_{\mcall_n}^{-1}]$. We need to find $\gb \in \mcalk_n[\mcalw_{\mcalk_n}^{-1}]$ and a weak equivalence $\beb \colon QRF \stackrel{\sim}{\lra} \gb$. First of all let $\Delta^n = \vzn$, and define $\partial \mcalu(\Delta^n) \subseteq \mcalu(\Delta^n)$ as the full subposet whose objects are $U_{\sigma}$'s with $\sigma$ be a simplex of the boundary of $\Delta^n$. 
	By the induction hypothesis  there exist an  isotopy functor $G \colon \partial \mcalu(\Delta^n) \lra \ca$ 
	and a natural transformation $\beta \colon QRF|\partial \mcalu (\Delta^n) \stackrel{\sim}{\lra} G$. From the base case, there is a weak equivalence $g \colon QRF(U_{\vzn}) \stackrel{\sim}{\lra} QRA$. Since $QRA$ and $QRF(U_{\vzn})$ are both fibrant and cofibrant, it follows by Proposition~\ref{ho_prop} that $g$ admits a homotopy inverse, say $f$. Let 
	$
	H \colon QRF(U_{\vzn}) \times I \lra QRF(U_{\vzn})
	$
	denote a homotopy from $fg$ to $id$. Now define  $G' \colon \mcalu(\Delta^n) \lra \ca$ as 
	\[
	G'|\partial \mcalu(\Delta^n) := G,  G'(U_{\vzn}) := QRA,
	\text{ and }
	G'(d^i) := \beta[U_{\vzin}] \circ F(d^i) \circ f,
	\]
	where $d^i$ is the map from (\ref{di_eqn}). On the compositions we define $G'$ in the obvious way (that is, $G'(a \circ b) := G'(b) \circ G'(a)$). It is straightforward to see that $G'$ is  a contravariant functor.  
	
	One may take $\gb$ to be $G'$ and $\beb$ to be $\beta'$, where $\beta' \colon F \lra G'$ is defined as $\beta$ on $\partial \mcalu(\Delta^n)$ and $g$ on $U_{\vzn}$. The issue with that definition is the fact that the square involving $G'(d^i)$, $F(d^i)$, $g$ and $\beta[U_{\vzin}]$  is only commutative up to homotopy. To fix this, consider  the following commutative diagram.
	\[
	\xymatrix{QRF(U_{\vzn}) \ar[rr]^-{g}_-{\sim}   \ar@{>->}[dd]_-{i_0}^-{\sim} &  & G'(U_{\vzn}) \ar[dd]_-{p} \ar@{>->}[rd]^-{\sim}_{\tau} &   \\
		&   &                       &  Z_{(A,G',p)}   \ar@{->>}[ld]^-{\pb}  \\
		QRF(U_{\vzn})  \times I \ar[rr]_-{\mcalh} \ar@{.>}[rrru]^-{\overline{\mcalh}}  &  &   \underset{U_{\sigma} \in \partial \mcalu(\Delta^n)}{\text{lim}} G'(U_{\sigma})  &  }
	\]
	In that diagram  the square involving $g, \mcalh, i_0$ and $p$ is induced by the commutative square
	\[
	\xymatrix{QRF(U_{\vzn}) \ar[rrrr]^-{g}  \ar@{>->}[d]_-{i_0}  & & & &  G'(U_{\vzn}) \ar[d]_-{\sim}^-{G'(d^i)}  \\
		QRF(U_{\vzn}) \times I \ar[rrrr]_-{\beta[U_{\vzin}]F(d^i)H} & & & & G'(U_{\vzin}),}
	\] 
	while $\pb \tau$ is the factorization of $p$ such that $Z_{(QRA, G', p)}$ belongs to $\ca$, and  $\overline{\mcalh}$ is given by the lifting axiom. Now define $\gb \colon \mcalu(\Delta^n) \lra \ca$ as 
	\[
	\gb|\partial \mcalu(\Delta^n):= G', \quad  \gb(U_{\vzn}):= Z_{(QRA, G', p)} \quad \text{and} \quad \gb(d^i):= P_{U_{\sigma_i}} \circ \pb,
	\] 
	where 
	$
	\sigma_i:= \vzin$ and $P_{U_{\sigma_i}} \colon  \underset{U_{\sigma} \in \partial \mcalu(\Delta^n)}{\text{lim}} G'(U_{\sigma}) \lra G'(U_{\sigma_i})
	$
	is the canonical projection. Also define 
	\[
	\beb|\partial \mcalu(\Delta^n) := \beta \qquad \text{and} \qquad    \beb[U_{\vzn}] := \overline{\mcalh} \circ i_1,
	\]
	where $i_1 \colon QRF(U_{\vzn}) \lra QRF(U_{\vzn}) \times I$ is the canonical inclusion. 
	It is straightforward to see that $\beb$ is a weak equivalence. This proves the lemma. 
\end{proof}

\begin{lem} \label{full_lem}
	Let $\mcalk$ and $\mcall$ be as in Lemma~\ref{esur_lem}. Then the functor $\phi$ from (\ref{phi_eqn}) is full.
\end{lem}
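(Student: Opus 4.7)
The plan is to apply Proposition~\ref{ho_prop}(i) to both $\mathcal{K}$ and $\mcall$, thereby reducing morphisms in the localized categories to homotopy classes of maps between explicit cofibrant-fibrant replacements, and then to verify that every class in $\mcall$ lifts to a class in $\mathcal{K}$.

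First I would check the hypotheses of Proposition~\ref{ho_prop}(i). By Proposition~\ref{ca_prop}, every $F \in \mathcal{K}$ takes values in fibrant-cofibrant objects of $\mcalm$, hence $F$ is cofibrant in the injective model structure on $\mcalm^{\utm}$ (Proposition~\ref{model_prop}). Thus in both $\mathcal{K}$ and $\mcall$ one may take $QF = F$. For the fibrant replacement, Proposition~\ref{mcalrb_prop}(i) ensures $\mcalrb F' \in \mathcal{K}$, so $RF' := \mcalrb F' \in \mathcal{K} \subseteq \mcall$ serves as a common fibrant replacement. Together with Proposition~\ref{mcalrb_prop}(ii), this makes Proposition~\ref{ho_prop}(i) applicable to both $\mathcal{K}$ and $\mcall$, giving
\[
\Hom_{\mathcal{K}[\mcalw_{\mathcal{K}}^{-1}]}(F, F') \cong \Hom_{\mathcal{K}}(F, \mcalrb F')/{\sim_{\mathcal{K}}}, \quad \Hom_{\mcall[\mcalw_{\mcall}^{-1}]}(F, F') \cong \Hom_{\mcall}(F, \mcalrb F')/{\sim_{\mcall}}.
\]

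Next I would exploit the fact that $\mathcal{K}$ is a full subcategory of $\mcall$: since $\ca$ is a full subcategory of $\mcalm$ (Definition~\ref{ca_defn}), any natural transformation in $\mcall$ whose source and target lie in $\mathcal{K}$ already lies in $\mathcal{K}$; in particular $\Hom_{\mathcal{K}}(F, \mcalrb F') = \Hom_{\mcall}(F, \mcalrb F')$. Moreover, the cylinder object $F \times I$ of Proposition~\ref{mcalrb_prop}(iii) lies in $\mathcal{K}$ and hence also in $\mcall$, so every $\mathcal{K}$-homotopy is automatically an $\mcall$-homotopy; by Proposition~\ref{htpy_prop} this yields $\sim_{\mathcal{K}} \Rightarrow \sim_{\mcall}$ on $\Hom_{\mathcal{K}}(F, \mcalrb F')$. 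These two points together produce a surjection $\Hom_{\mathcal{K}}(F, \mcalrb F')/{\sim_{\mathcal{K}}} \twoheadrightarrow \Hom_{\mcall}(F, \mcalrb F')/{\sim_{\mcall}}$ given by sending a representative to itself.

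To conclude: via the two isomorphisms displayed above, the map on Hom-sets induced by $\phi$ corresponds to this surjection, so $\phi$ is full. Concretely, given $[g] \in \Hom_{\mcall[\mcalw_{\mcall}^{-1}]}(F, F')$ represented by some $g_0 \colon F \to \mcalrb F'$ in $\mcall$, the same $g_0$ belongs to $\Hom_{\mathcal{K}}(F, \mcalrb F')$ and hence determines a class $[h] \in \Hom_{\mathcal{K}[\mcalw_{\mathcal{K}}^{-1}]}(F, F')$ with $\phi([h]) = [g]$. The main technical hurdle is ensuring that all the ingredients of Proposition~\ref{ho_prop}(i) are available in the subcategories $\mathcal{K}$ and $\mcall$, which are not themselves model categories; this is precisely what Proposition~\ref{mcalrb_prop} supplies. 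Once those foundations are in place, fullness follows essentially formally from the fact that $\mathcal{K}$ sits as a full subcategory inside $\mcall$.
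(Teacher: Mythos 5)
Your overall route is the same as the paper's — identify $\underset{\mcall[\mcalw_{\mcall}^{-1}]}{\Hom}(F,F')$ with homotopy classes of maps $F \to \mcalrb F'$ and use that $\mcalk$ is a full subcategory of $\mcall$ — but there is a genuine gap in the step where you apply Proposition~\ref{ho_prop}(i) to $\mcalk$ itself. The hypothesis of Proposition~\ref{ho_prop} is that the subcategory is closed under taking fibrant and cofibrant replacements, and its (one-line) proof goes through Hovey's Theorem 1.2.10 in the ambient model category $\mcalm^{\utm}$; the functorial replacements there have no reason to take values in the small category $\ca$, so $\mcalk = \futma$ is \emph{not} closed under them. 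What Proposition~\ref{mcalrb_prop}(i) gives is only that the hand-made replacement $\mcalrb$ preserves $\mcalk$, which is strictly weaker than the hypothesis of Proposition~\ref{ho_prop}, and the conclusion
\[
\underset{\mcalk[\mcalw_{\mcalk}^{-1}]}{\Hom}(F,F') \;\cong\; \underset{\mcalk}{\Hom}(F,\mcalrb F')/\!\sim_{\mcalk}
\]
is not something you can cite: establishing it for the non-model subcategory $\mcalk$ is essentially as strong as the full faithfulness of $\phi$ that is being proved (granting it, Lemma~\ref{ffull_lem} would also follow immediately, whereas the paper needs a separate cylinder-object argument there), so invoking it at this point begs the question.

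The fix — and this is exactly what the paper does — is to notice that for surjectivity you never need an isomorphism on the $\mcalk$-side, only a map into the localization: send $g_0 \in \underset{\mcalk}{\Hom}(F,\mcalrb F')$ to the string $\theta(g_0) := (g_0, f_2^{-1})$ in $\mcalk[\mcalw_{\mcalk}^{-1}]$, where $f_2 \colon F' \stackrel{\sim}{\to} \mcalrb F'$ is the fibrant replacement supplied by Proposition~\ref{mcalrb_prop}(i). Proposition~\ref{ho_prop} is then applied only to $\mcall$ (which \emph{is} closed under replacements, Proposition~\ref{mcalrb_prop}(ii)), giving the isomorphism $\theta'$, and fullness follows from the commutativity of the square (\ref{full_eqn}) together with the surjectivity of $\theta'\circ\pi$, using (as you correctly observe) that $\underset{\mcalk}{\Hom}(F,\mcalrb F') = \underset{\mcall}{\Hom}(F,\mcalrb F')$ since $\ca \subseteq \mcalm$ is full. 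Your verification that objects of $\mcalk$ are cofibrant and that $\mcalrb F' \in \mcalk$ is correct and is used in the same way by the paper; it is only the appeal to the $\mcalk$-side isomorphism that has to be replaced by the explicit string construction.
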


\begin{proof}
	Let $F_1, F_2 \in \mcalk[\mcalw_{\mcalk}^{-1}]$. We need to show that the canonical map 
	\begin{eqnarray} \label{phii_eqn}
	\phi_{F_1F_2} \colon \underset{\mcalk[\mcalw_{\mcalk}^{-1}]}{\text{Hom}} (F_1, F_2) \lra \underset{\mcall[\mcalw_{\mcall}^{-1}]}{\text{Hom}} (\phi(F_1), \phi(F_2))
	\end{eqnarray} 
	induced by $\phi$ is surjective. To do this, let $f_1 \colon QF_1 \stackrel{\sim}{\lra} F_1$ be a cofibrant replacement of $F_1$. One can take $QF_1 = F_1$ and $f_1 =id$ since all objects in $\ca$ are cofibrant and by Proposition~\ref{model_prop} in the injective model category cofibrations are pointwise cofibrations. 
	
	Also let $f_2 \colon F_2 \stackrel{\sim}{\lra} \mcalrb F_2$ be a fibrant replacement of $F_2$, which lies in $\mcalk$ thanks to Proposition~\ref{mcalrb_prop}. Consider the following commutative square.
	\begin{eqnarray} \label{full_eqn}
	\xymatrix{\underset{\mcalk[\mcalw_{\mcalk}^{-1}]}{\text{Hom}} (F_1, F_2) \ar[rr]^-{\phi_{F_1F_2}} &  &  \underset{\mcall[\mcalw_{\mcall}^{-1}]}{\text{Hom}} (\phi(F_1),\phi(F_2))   \\
		\underset{\mcalk}{\text{Hom}} (F_1, \mcalrb F_2) \ar@{=}[r] \ar[u]^-{\theta} &  \underset{\mcall}{\text{Hom}} (F_1, \mcalrb F_2) \ar[r]_-{\pi} & \underset{\mcall}{\text{Hom}} (F_1, \mcalrb F_2) \slash \sim \ar[u]_-{\cong}^-{\theta'} }
	\end{eqnarray}
	In that square $\theta$ is defined as the string $\theta(f):= (f, f_2^{-1})$, $\pi$ is the canonical surjection, and the isomorphism $\theta'$ comes from Proposition~\ref{ho_prop}. The equality comes from the fact that $\mcalk$ is a full subcategory of $\mcall$ by definition. Since the composition $\theta' \circ \pi$ is surjective, and since the square commutes, it follows that $\phi_{F_1F_2}$ is a surjective map. This ends the proof. 
\end{proof}

\begin{lem} \label{ffull_lem}
	Let $\mcalk$ and $\mcall$ be as in Lemma~\ref{esur_lem}. Then the functor $\phi$ from (\ref{phi_eqn}) is faithful. 
\end{lem}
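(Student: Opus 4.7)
The plan is to mirror the strategy of Lemma~\ref{full_lem} but now reducing injectivity (rather than surjectivity) of $\phi_{F_1F_2}$ to a comparison of homotopy relations on a single Hom set. The idea: Proposition~\ref{ho_prop}(i) lets us replace morphism sets in the localizations by homotopy classes of maps between suitable replacements, and since $\mcalk$ is full in $\mcall$, the underlying Hom sets agree, so the only issue is whether the two notions of homotopy agree.

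First I would set up the diagram. By Proposition~\ref{ca_prop} every $F \in \mcalk$ is objectwise cofibrant, so one can take $QF_1 = F_1$, while $\mcalrb F_2$ is a fibrant replacement of $F_2$ lying in $\mcalk$ by Proposition~\ref{mcalrb_prop}(i). Applying Proposition~\ref{ho_prop}(i) in both $\mcalk$ and $\mcall$ gives a commutative square
\[
\xymatrix{\underset{\mcalk}{\text{Hom}}(F_1, \mcalrb F_2)/\sim_{\mcalk} \ar[r]^-{\cong} \ar[d] & \underset{\mcalk[\mcalw_{\mcalk}^{-1}]}{\text{Hom}}(F_1, F_2) \ar[d]^-{\phi_{F_1F_2}} \\
\underset{\mcall}{\text{Hom}}(F_1, \mcalrb F_2)/\sim_{\mcall} \ar[r]^-{\cong} & \underset{\mcall[\mcalw_{\mcall}^{-1}]}{\text{Hom}}(F_1, F_2).}
\]
Since $\mcalk$ is a full subcategory of $\mcall$, the two underlying Hom sets $\underset{\mcalk}{\text{Hom}}(F_1, \mcalrb F_2)$ and $\underset{\mcall}{\text{Hom}}(F_1, \mcalrb F_2)$ coincide, so the left vertical map is merely a quotient map. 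Hence $\phi_{F_1F_2}$ is injective iff the homotopy relation $\sim_{\mcall}$ on this common Hom set is contained in $\sim_{\mcalk}$.

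The easy direction $\sim_{\mcalk} \Rightarrow \sim_{\mcall}$ is immediate, because every cylinder object in $\mcalk$ is one in $\mcall$. For the nontrivial direction, suppose $f \sim_{\mcall} g$. The key trick is to pass to the ambient model category $\mcalm^{\utm}$ where Proposition~\ref{htpy_prop} applies: since $F_1$ is cofibrant and $\mcalrb F_2$ is fibrant in $\mcalm^{\utm}$, the homotopy $f \sim g$ can be realized using \emph{any} cylinder object. In particular I would use the cylinder $F_1 \times I \in \mcalk$ constructed in Proposition~\ref{mcalrb_prop}(iii), producing a left homotopy $H \colon F_1 \times I \lra \mcalrb F_2$ in $\mcalm^{\utm}$. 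Since both source and target lie in $\mcalk$ and $\mcalk$ is full in $\mcalm^{\utm}$, $H$ is actually a morphism of $\mcalk$, witnessing $f \sim_{\mcalk} g$.

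The main subtlety is the justification for invoking Proposition~\ref{htpy_prop} when $\mcalk$ and $\mcall$ are not themselves model categories; this rests on the fact that cylinder objects and homotopies are inherited unchanged from the ambient model category $\mcalm^{\utm}$, together with the standard fact that when the source is cofibrant and target is fibrant any two cylinder objects yield the same homotopy relation. Everything else is formal manipulation of the square above.
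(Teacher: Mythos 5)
Your proposal is correct and rests on exactly the same key idea as the paper's proof: since $F_1$ is cofibrant and $\mcalrb F_2$ is fibrant, Proposition~\ref{htpy_prop} lets one realize the homotopy on the cylinder object $F_1 \times I$ supplied inside $\mcalk$ by Proposition~\ref{mcalrb_prop}(iii), so the homotopy is witnessed entirely within $\mcalk$. The only (harmless) difference is the endgame: the paper concludes by invoking Proposition~\ref{izio_prop} ($i_0 = i_1$ in $\mcalk[\mcalw_{\mcalk}^{-1}]$) to get equality of the two morphisms in the localization of $\mcalk$, whereas you apply Proposition~\ref{ho_prop}(i) to $\mcalk$ itself, which is justified since objects of $\mcalk$ are already cofibrant and $\mcalrb$ gives fibrant replacements inside $\mcalk$.
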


\begin{proof}
	Let $F_1, F_2 \in \mcalk[\mcalw_{\mcalk}^{-1}]$. As we mentioned in the proof of Lemma~\ref{full_lem}, $QF_1 = F_1$ and $\mcalrb F_2$ belongs to $\mcalk$. We need to show that the map $\phi_{F_1F_2}$ from (\ref{phii_eqn}) is injective. Consider the diagram (\ref{full_eqn}), and  let $\eta_0, \eta_1 \in \underset{\mcalk[\mcalw_{\mcalk}^{-1}]}{\text{Hom}} (F_1, F_2)$ such that $\phi_{F_1F_2}(\eta_0) = \phi_{F_1F_2}(\eta_1)$. This latter equality implies that $\eta'_0$ is homotopic to $\eta'_1$, where $\eta'_i := \theta'^{-1} (\phi_{F_1F_2}(\eta_i))$. By Propositions~\ref{htpy_prop}, \ref{mcalrb_prop}, there is a left homotopy $H \colon F_1 \times I \lra \mcalrb F_2$  for some  cylinder object $F_1 \times I \in \mcall$ for $F_1$. The key point of the proof is the fact that one can always choose $F_1 \times I$ in $\mcalk$ thanks to Proposition~\ref{mcalrb_prop}.  Now consider the following commutative diagram in $\mcalk$. 
	\[
	\xymatrix{F_1  \ar[rrd]^-{\eta'_0} \ar[d]_-{i_0}  &  &  \\
		F_1 \times I \ar[rr]^-{H}     &   &   \mcalrb F_2   \\
		F_1  \ar[rru]_-{\eta'_1} \ar[u]^-{i_1}}
	\] 
	Since $i_0=i_1$ in $\mcalk[\mcalw_{\mcalk}^{-1}]$  by Proposition~\ref{izio_prop}, it follows that $\eta'_0 = \eta'_1$ in $\mcalk[\mcalw_{\mcalk}^{-1}]$, which implies  $\eta_0 = \eta_1$. This proves the lemma. 
\end{proof}

\begin{prop}  \label{fuca_prop}
	Let $\mcalk$ and $\mcall$ be as in Definition~\ref{fuca_defn}. Then the inclusion $\mcalk \hra \mcall$ induces an equivalence of categories between the  localizations $\mcalk[\mcalw_{\mcalk}^{-1}]$ and $\mcall[\mcalw_{\mcall}^{-1}]$. 
\end{prop}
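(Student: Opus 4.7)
The proposition is a direct consequence of the three preparatory lemmas just proved. My plan is simply to verify that the localized inclusion functor $\phi \colon \mcalk[\mcalw_{\mcalk}^{-1}] \lra \mcall[\mcalw_{\mcall}^{-1}]$ defined in (\ref{phi_eqn}) satisfies the three criteria for being an equivalence of categories, namely that it is essentially surjective, full, and faithful.

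First I would recall the standard classical fact (see e.g.\ Mac Lane) that a functor between (ordinary) categories is an equivalence precisely when it is essentially surjective and fully faithful. Since $\mcalk[\mcalw_{\mcalk}^{-1}]$ and $\mcall[\mcalw_{\mcall}^{-1}]$ are ordinary categories (both are obtained by formally inverting classes of morphisms in categories that are themselves ordinary), this criterion applies directly.

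Next I would assemble the three lemmas: Lemma~\ref{esur_lem} supplies essential surjectivity, Lemma~\ref{full_lem} supplies fullness, and Lemma~\ref{ffull_lem} supplies faithfulness. Combining these three statements yields that $\phi$ is an equivalence of categories, which is the content of Proposition~\ref{fuca_prop}.

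There is no real obstacle here — all the work has been done in the preparatory lemmas. The essentially surjectivity proved in Lemma~\ref{esur_lem} (via induction over the simplices of $\tm$ and careful use of the functorial factorizations defining objects of $\ca$) was the most substantial step; fullness and faithfulness followed from the homotopy-theoretic machinery recalled in Section~\ref{local_category_subsection}, together with the existence of cylinder objects in $\mcalk$ and $\mcall$ and the fact that $\mcalrb F$ lies in $\mcalk$ for $F \in \mcalk$ (Proposition~\ref{mcalrb_prop}). Thus the proof amounts to a single sentence citing the three lemmas.
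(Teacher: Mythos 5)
Your proposal matches the paper's proof exactly: the paper likewise deduces the proposition immediately from Lemmas~\ref{esur_lem}, \ref{full_lem} and \ref{ffull_lem}, invoking the classical fact that an essentially surjective and fully faithful functor is an equivalence of categories. No gaps; your summary of where the real work lies (in the preparatory lemmas) is also accurate.
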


\begin{proof}
	This follows immediately from Lemmas~\ref{esur_lem}, \ref{full_lem} and \ref{ffull_lem} as it is well known that a  fully faithful and essentially surjective functor is an equivalence of categories. 
\end{proof}

\section{The map $\lamb \colon \futma \slash \textgoth{w} \lra  [\tmd, \ahd]$  } \label{lambda_section}

Recall the category $\fuca$ (which was called $\mcalk$ for convenience in Section~\ref{fautm_futma_subsection}) from Definition~\ref{fuca_defn}. It turns out that it is a small category since $\utm$ and $\ca$ are both small. In this section and the next one, we will view $\fuca$ as a set. 
By a well-ordering on a set, we mean a total ordering such that every non-empty subset
has a smallest element. 
Choose a well-ordering on the set of vertices of $\tm$, and let $\tmd$ denote the canonical  associated simplicial set where the simplices of $\tm$ are the non-degenerate simplices of $\tmd$.

Recall the simplicial set $\ahd$ from Section~\ref{ahd_subsection}, and let $\htah$ denote the set of simplicial maps $\tmd \lra \ahd$ and $\underset{\SsS}{\text{Hom}}(\tm, \ahd)$ the set of semi-simplicial maps.

The goal of this section is to define a map $\Lambda \colon \fuca \lra \htah$ 
and prove Proposition~\ref{lambda_prop} below, which says  that $\Lambda(F)$ is homotopic to $\Lambda(F')$ whenever $F$ is weakly equivalent to  $F'$. The notion of homotopy between two simplicial maps we use is the one  from \cite[Section I.6]{goe_jar09}. 

\begin{defn}
Let $K$ be a simplicial complex and $F\colon K\rightarrow \ca$ a weak equivalence diagram.

Define  $\overline{\Lambda}(F):= f \colon \tm \lra \ahd', \sigma \mapsto f_{\sigma}$ as follows. Let $n \geq 0,$ and let $\sigma = \langle v_0, \cdots, v_n \rangle \in \tm_n$. 
	Define  $f_{\sigma} \colon \dtn \lra \ca$ on objects as 
	$
	f_{\sigma}(\{a_0, \cdots, a_s\}) := F(U_{\langle v_{a_0}, \cdots, v_{a_s}\rangle}).
	$
	On morphisms, it is enough to define $f_{\sigma}$ on the generators $d^i$'s from Remark~\ref{dtn_rmk}.	$
	\theta^i \colon U_{\langle v_{a_0}, \cdots, \widehat{v}_{a_i}, \cdots, v_{a_s}\rangle} \hra U_{\langle v_{a_0}, \cdots, v_{a_s}\rangle}
	$
	of $\utm$, define $f_{\sigma}(d^i) :=F(\theta^i)$.

\end{defn}

Recall from Section \ref{2fun} that weak equivalence diagrams out of subcategories of $\om$ are isotopy functors. 

\begin{lem}\label{plu}
Using the equivalence of Remark \ref{utm_pt_rmk}, 
the map $\overline{\Lambda}\colon \fuca \rightarrow \shtoh $
 is a bijection. 

Also $F$ is fibrant if and only if $\overline{\Lambda}(F)$ lands in $\ahd$ and so $\overline{\Lambda}$ induces a bijection between fibrant objects in $\fuca$ and $\shtah$. 

\end{lem}
\begin{proof}
We construct the inverse of the bijection as follows.
Given $f\in \shtoh$, and $\sigma\in P(\tm)$ let $F(\sigma)=f(\sigma)([|\sigma|-1])$

The second statement is similarly straightforward. 
\end{proof}

The main goal of the rest of this section is essentially to show that this bijection takes weak equivalences to homotopy equivalences. The goal of Section \ref{theta_section} is to show the bijection takes homotopy equivalences to weak equivalences.

\begin{defn}\label{lambdaxi}
Define  $\Lambda(F):= f \colon \tmd \lra \ahd, \sigma \mapsto f_{\sigma}$ when restricting to non-degenerate simplices as $\mcalrb {\overline{\Lambda}}$. Where 
$\mcalrb$ is the fibrant replacement functor from  Equation (\ref{mcalrb_eqn}). On degenerate simplices we extend the map using Lemma \ref{detbyrest}.
Observe $f_{\sigma}$ belongs to $\ahn$ and $f \colon \tmd \lra \ahd$ is a simplicial map. 
\end{defn}

Now  on the set $\fuca$ define the equivalence relation  \lq\lq $\textgoth{w} $\rq\rq{} as $F \ \textgoth{w} \ F'$ if and only if $F$ is weakly equivalent to $F'$ in the sense of Definition~\ref{defn:we_categories}. 
Let  $[\tmd, \ahd]$ be the set of homotopy classes of simplicial maps from $\tmd$ to $\ahd$. 

\begin{defn} \label{lamb_defn}
	Consider the map $\Lambda \colon \fuca \lra \htah$ and the equivalence relation \lq\lq $\textgoth{w}$\rq\rq{} we just defined. 
	Thanks to Proposition~\ref{lambda_prop} below $\Lambda$ passes to the quotient. Define $\lamb \colon \futma \slash \textgoth{w} \lra  [\tmd, \ahd]$ to be the resulting map from the quotient. 
\end{defn}

\begin{prop} \label{lambda_prop}   
	Let $F, F' \in \fuca$. Assume $F\ \textgoth{w}\ F'$, then $\Lambda(F)$ is homotopic to $\Lambda(F')$. 
\end{prop}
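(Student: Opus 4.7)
Homotopy between simplicial maps into the Kan complex $\ahd$ is an equivalence relation (by Proposition~\ref{ahd_prop} and \cite[Cor.~6.2]{goe_jar09}), so by transitivity it suffices to prove the claim for a single weak equivalence $\beta \colon F \to F'$; the case $F \la F'$ is symmetric. Applying the fibrant replacement functor $\mcalrb$ from~(\ref{mcalrb_eqn}) yields a natural transformation $\tilde\beta \colon \mcalrb F \to \mcalrb F'$ of fibrant $\utm$-diagrams in $\ca$ whose components are weak equivalences between fibrant-cofibrant objects.

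I would then construct an explicit simplicial homotopy $H \colon \tmd \times \Delta^1 \to \ahd$. A non-degenerate $n$-simplex of $\tmd \times \Delta^1$ has the form $(\sigma,\tau_k)$, with $\sigma = \langle v_0,\ldots,v_n\rangle$ non-degenerate in $\tmd$ and $\tau_k \in \Delta^1_n$ the non-decreasing map $[n] \to [1]$ sending $\{0,\ldots,k-1\}$ to $0$ and $\{k,\ldots,n\}$ to $1$, for some $k \in \{0,\ldots,n+1\}$. For each such pair, first define a contravariant ``twisted'' diagram $\tilde H_{\sigma,k} \colon \dtn \to \ca$ by
\[
\tilde H_{\sigma,k}(\{a_0 < \cdots < a_s\}) = \begin{cases} \mcalrb(F')\bigl(U_{\langle v_{a_0},\ldots,v_{a_s}\rangle}\bigr) & \text{if } a_0 \geq k, \\ \mcalrb(F)\bigl(U_{\langle v_{a_0},\ldots,v_{a_s}\rangle}\bigr) & \text{otherwise,} \end{cases}
\]
using the structure maps of $\mcalrb(F)$ or $\mcalrb(F')$ on morphisms that stay within one region, and composing with the appropriate component of $\tilde\beta$ on the remaining case, in which $\alpha \subseteq \alpha'$, $\alpha'$ contains a vertex with index $< k$, and $\alpha$ does not. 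Naturality of $\tilde\beta$ makes this a well-defined contravariant functor, and every morphism in its image is a weak equivalence because both the $\mcalrb(F_?)$-structure maps (by the isotopy functor axiom and Remark~\ref{utm_rmk}) and the components of $\tilde\beta$ are. Set $H(\sigma,\tau_k) := \mcalrb(\tilde H_{\sigma,k}) \in \ah_n$, and extend to degenerate simplices via the simplicial structure of $\ahd$.

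At the boundary $k = n+1$, the condition $a_0 \geq n+1$ is vacuous, hence $\tilde H_{\sigma,n+1} = f_\sigma$ and $H(\sigma,\tau_{n+1}) = \Lambda(F)(\sigma)$; symmetrically $H(\sigma,\tau_0) = \Lambda(F')(\sigma)$. For face compatibility, a direct computation shows that both $d_i H(\sigma,\tau_k)$ and $H(d_i\sigma, d_i\tau_k)$ evaluate a subset $\{b_0 < \cdots < b_t\} \subseteq [n-1]$ to $\mcalrb(F_?)(U_{\langle v_{d^i(b_0)},\ldots, v_{d^i(b_t)}\rangle})$ with the same choice of $F_?$, the case split $i < k$ versus $i \geq k$ matching the identities $d_i\tau_k = \tau_{k-1}$ respectively $d_i\tau_k = \tau_k$. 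This equality at the pre-$\mcalrb$ level promotes to an equality of fibrant replacements by Proposition~\ref{mcalr_prop}~(ii). Degeneracies are handled analogously, using the consistency argument of Section~\ref{sj_subsubsection} and Lemma~\ref{comp_lem}.

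The main obstacle is purely combinatorial: verifying strictly (not merely up to weak equivalence) that the face and degeneracy relations of $\tmd \times \Delta^1$ hold after the fibrant replacement step. The critical ingredient is the compatibility of $\mcalrb$ with injective reindexing of $\dtn$-diagrams and its behaviour on already-fibrant faces (Proposition~\ref{mcalr_prop}~(ii)-(iii)); combined with the naturality of $\tilde\beta$, this rigidity produces a genuine simplicial map $H$ witnessing $\Lambda(F) \simeq \Lambda(F')$, which completes the proof.
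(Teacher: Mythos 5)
Your global strategy (use Kan-ness of $\ahd$ to reduce the zigzag to a single weak equivalence $\beta\colon F\to F'$, then build a prism homotopy out of $\mcalrb F$, $\mcalrb F'$ and $\mcalrb\beta$) matches the paper's, and your face-level bookkeeping ($\tilde H_{\sigma,k}\circ d^i=\tilde H_{d_i\sigma,\,d_i\tau_k}$, promoted through $\mcalr$ by Proposition~\ref{mcalr_prop}~(ii)) is correct as far as it goes. But there is a genuine gap in the construction of $H$. Your indexing of the prism is wrong: a non-degenerate $n$-simplex of $\tmd\times\Delta[1]$ need not have non-degenerate first component. In the classical prism decomposition, the top-dimensional non-degenerate simplices lying over a non-degenerate $m$-simplex $\sigma$ of $\tmd$ are the $(m+1)$-simplices $(s_j\sigma,\tau_{j+1})$, whose $\tmd$-component is degenerate while the pair is not. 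Your recipe defines $H$ only on pairs $(\sigma,\tau_k)$ with $\sigma$ non-degenerate, i.e.\ on $\tmd\times\partial\Delta[1]$ together with the ``diagonal'' simplices; the cells that actually fill the prism are never defined, and they cannot be produced by ``extending to degenerate simplices via the simplicial structure'', because they are not degenerate. (Already for $\tmd=\Delta[1]$ your prescription covers the two horizontal edges, the diagonal and the four vertices of the square, but misses the two vertical edges and both $2$-cells; so what you have is the boundary of a homotopy, not a homotopy.)

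Filling those cells is exactly the nontrivial part, and it interacts with the second point you wave away: compatibility with the degeneracies of $\ahd$. These are not given by $\lambda\mapsto\mcalr(\lambda s^j)$ (the paper notes that this naive formula violates $s_is_j=s_{j+1}s_i$), so even if you applied your formula $\mcalr(\tilde H_{\sigma,k})$ to degenerate $\sigma$ as well, the identity $H(s_j\sigma,s_j\tau)=s_jH(\sigma,\tau)$ is not a formal consequence of Proposition~\ref{mcalr_prop}; ``handled analogously, using Section~\ref{sj_subsubsection} and Lemma~\ref{comp_lem}'' is precisely the verification that needs to be done and is the delicate point. The paper circumvents both issues differently: from $\beta$ it first produces a family $\{\beta_\sigma\colon f_\sigma\to f'_\sigma\}$ satisfying the strict compatibilities $d_i\beta_\sigma=\beta_{d_i\sigma}$ and $s_k\beta_\sigma=\beta_{s_k\sigma}$ of Definition~\ref{we_defn} (using Remark~\ref{dibeta_rmk} for degenerate simplices), and then Lemma~\ref{htpy1_lem} builds $H$ by induction on the skeleta of $\tmd\times\Delta[1]$, filling each top cell with a factorization object $Z\in\ca$ fibered over the limit of its already-constructed fibrant boundary diagram. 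That inductive filling supplies exactly the data missing from your construction; to repair your argument you would either have to reproduce it or prove directly that your partially defined map extends over the prism, which is not automatic.
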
 

The proposition is proved at the end of the section. 

\begin{defn}  \label{we_defn}
	Let $f, f' \colon \tm \lra \ahd$ be semi-simplicial maps. 
	A morphism $\beta \colon f \lra f'$ consists of a collection 
		$
		\beta = \{\beta_{\sigma} \colon f_{\sigma} \lra f'_{\sigma}\}_{ \sigma \in \tm}
		$
		of natural transformations such that 
		\begin{eqnarray} \label{cc_disk_eqn}
		\beta_{d_i\sigma}=d_i\beta_{\sigma}  \quad \text{for all $\sigma$, $i$}. 
		\end{eqnarray}
		The morphism $\beta$ is a weak equivalence if each $\beta_{\sigma}$ is a weak equivalence of diagrams. (Note this was defined at the end of the introduction.)
\end{defn} 

In the proof of the next lemma and in Lemma \ref{htpy2_lem} we let $\Delta[1]$ denote the simplicial set associated to the semi-simplicial set $\dt^1$. 

\begin{lem} \label{htpy1_lem}
	Let $f, f' \colon \tm \lra \ahd$ be semi-simplicial maps. Assume that there is a morphism $\beta \colon f \lra f'$ which is a weak equivalence (see Definition~\ref{we_defn}). Then $f$ is homotopic to $f'$. 
\end{lem}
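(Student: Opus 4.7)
The plan is to construct an explicit simplicial homotopy $H \colon \tmd \times \Delta[1] \to \ahd$ with $H|_{\tmd \times \{0\}} = f$ and $H|_{\tmd \times \{1\}} = f'$, where the interpolation between $f_\sigma$ and $f'_\sigma$ on each simplex $\sigma$ is realized by $\beta_\sigma$. Recall that the prism $\Delta^n \times \Delta^1$ decomposes canonically into $n+1$ non-degenerate $(n+1)$-simplices $P_0, \ldots, P_n$, where $P_k$ has ordered vertices $(0,0), (1,0), \ldots, (k,0), (k,1), (k+1,1), \ldots, (n,1)$. For each non-degenerate $n$-simplex $\sigma \in \tmd$ and each $k \in \{0, \ldots, n\}$, I will produce an $(n+1)$-simplex $h_\sigma^k \in \ah_{n+1}$ that represents $H$ on $\sigma \times P_k$. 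For $S \subseteq [n+1]$ set $\bar{S} := (S \cap \{0, \ldots, k\}) \cup \{j-1 : j \in S, j > k\}$, and define a contravariant functor $\gamma_\sigma^k \colon \dt^{n+1} \to \ca$ by $\gamma_\sigma^k(S) = f_\sigma(\bar{S})$ if $\min S \leq k$, and $\gamma_\sigma^k(S) = f'_\sigma(\bar{S})$ otherwise. Morphisms are induced by the structure maps of $f_\sigma, f'_\sigma$ together with $\beta_\sigma(\bar{S})$ whenever one must cross from the ``bottom'' to the ``top'' of the prism; naturality of $\beta_\sigma$ makes this a well-defined contravariant functor, and the weak-equivalence hypothesis on $\beta_\sigma$ guarantees that all morphisms of $\gamma_\sigma^k$ are weak equivalences. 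Finally set $h_\sigma^k := \mcalr(\gamma_\sigma^k)$ using the fibrant replacement functor from Section~\ref{mcalr_subsection}, so that $h_\sigma^k \in \ah_{n+1}$.

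Next I would verify that the family $\{h_\sigma^k\}$ assembles into a simplicial map $H$. Because $\mcalr$ commutes with restriction along injective face inclusions (Proposition~\ref{mcalr_prop}(ii)) and fixes already-fibrant sub-faces (Proposition~\ref{mcalr_prop}(iii)), it suffices to check the face identifications at the level of $\gamma$. A direct computation yields: the ``top/bottom'' identifications $d_0 \gamma_\sigma^0 = f'_\sigma$ and $d_{n+1} \gamma_\sigma^n = f_\sigma$, which are already fibrant and hence preserved by $\mcalr$, providing the boundary conditions $H|_{\tmd \times \{0\}} = f$ and $H|_{\tmd \times \{1\}} = f'$; the internal shared-face identifications $d_{k+1} \gamma_\sigma^k = d_{k+1} \gamma_\sigma^{k+1}$, which glue $\sigma \times P_k$ to $\sigma \times P_{k+1}$ consistently; and the exterior-face identifications $d_i \gamma_\sigma^k = \gamma_{d_i \sigma}^{k-1}$ for $i < k$ and $d_i \gamma_\sigma^k = \gamma_{d_{i-1} \sigma}^k$ for $i > k+1$, which use the naturality condition (\ref{cc_disk_eqn}) of $\beta$ with respect to the face maps of $\tmd$.

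The hard part will be compatibility with the degeneracies $s_j$ of $\ahd$. Unlike the face maps, the degeneracies are defined in Section~\ref{sj_subsubsection} by a delicate inductive procedure involving colimits of menorahs of acyclic cofibrations and iterated applications of $\mcalr$ (see (\ref{acy_eqn}) and (\ref{meno_eqn})), so the required identity $s_j h_\sigma^k = h_{s_j \sigma}^{k'}$ is not formal. I expect to resolve this by constructing the family $\{h_\sigma^k\}$ itself by induction on the dimension of $\sigma$, in parallel with the inductive definition of $s_j$: when $\sigma$ is degenerate one sets $h_\sigma^k$ by applying the same menorah-colimit/fibrant-replacement machinery of Section~\ref{sj_subsubsection} to the $\beta$-twisted data supplied by the inductive hypothesis, so that the required degeneracy identities and accompanying weak equivalences become tautological from the construction. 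Once both face and degeneracy compatibilities are in place, $H$ is a well-defined simplicial map, and the boundary identifications of the previous paragraph exhibit it as the desired homotopy from $f$ to $f'$.
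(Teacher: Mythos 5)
Your construction is essentially the paper's own idea, executed with a different bookkeeping. The paper's proof of this lemma proceeds by induction on the skeleta of $\tmd \times \Delta[1]$: at each stage it carries along a weak equivalence $\eta_{n+1} \colon \fs([n+1]) \to \mcalh^{\sigma}_{(n+1)t}$ from the constant diagram to the (fibrant) boundary diagram assembled from the previously defined values, and fills the top cell by factoring $\lim(\eta_n)$ through an object $Z \in \ca$, exactly as in (\ref{mcale_eqn}); your route instead uses the classical prism decomposition of $\Delta^n\times\Delta^1$ and defines each filler in one shot as $\mcalr(\gamma^k_\sigma)$, where $\gamma^k_\sigma$ is the $\beta_\sigma$-twisted mixture of $f_\sigma s^k$ and $f'_\sigma s^k$. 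This is precisely what the paper does in its ``Idea of proof'' (the diagram (\ref{hb_eqn}) with $H(v_{012},011):=\mcalr(\hb)$), so the mathematical content coincides; your face checks are correct, the identifications $d_{k+1}\gamma^k_\sigma = d_{k+1}\gamma^{k+1}_\sigma$, $d_i\gamma^k_\sigma=\gamma^{k-1}_{d_i\sigma}$ ($i<k$), $d_i\gamma^k_\sigma=\gamma^{k}_{d_{i-1}\sigma}$ ($i>k+1$) use (\ref{cc_disk_eqn}) as you say, and Proposition~\ref{mcalr_prop}(ii)--(iii) transports them through $\mcalr$ and gives the boundary conditions $d_{n+1}h^n_\sigma=f_\sigma$, $d_0h^0_\sigma=f'_\sigma$. (One small point worth recording: that all morphisms of $h^k_\sigma$ are weak equivalences follows by two-out-of-three, since precomposing the structure maps $p_{\alpha'}\pb$ of $\mcalr(\gamma^k_\sigma)$ with the acyclic cofibration $\tau$ recovers the maps of $\gamma^k_\sigma$, which are weak equivalences by hypothesis on $\beta$.)

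Your last paragraph, however, mislocates the difficulty. The degeneracy identities $s_jh^k_\sigma = h^{k'}_{s_j\sigma}$ never need to be verified: a simplicial map out of $\tmd\times\Delta[1]$ is uniquely determined by compatible values on non-degenerate simplices, and since $\tmd$ comes from an ordered simplicial complex, the non-degenerate simplices of $\tmd\times\Delta[1]$ are exactly the prism pieces over non-degenerate $\sigma$ together with their faces, and faces of non-degenerate simplices in this product are again non-degenerate. Hence the face compatibilities you already established are the only conditions, and $H$ extends to degenerate simplices formally via the Eilenberg--Zilber decomposition, with compatibility automatic. In particular your proposed detour --- rebuilding fillers for degenerate $\sigma$ by rerunning the menorah-colimit machinery of Section~\ref{sj_subsubsection} so that the identities become ``tautological'' --- is both unnecessary and, as sketched, not an argument; if you keep that route you must actually prove those identities, which the sketch does not do. Replacing that paragraph by the observation above closes the proof.
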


\begin{proof}
	
	Our goal is to define a 
	homotopy $H \colon \tmd \times \Delta[1] \lra \ahd$ between $f$ and $f'$. 
	The proof breaks into two parts. 1) Construct a $\SsS$ map $H' \colon \tm \times \dt^1 \lra \ahd'$ and then 2) using fibrant replacement turn $H'$ into $H\colon  \tm \times \dt^1 \lra \ahd$, and then use Lemma \ref{detbyrest}.

	Starting with 1) the construction is similar to a mapping cone.
	The vertices  of $\tm\times \dt^1$ are pairs $(v,0)$ and $(v,1)$ where $v$ is a vertex of $\tm$. 
	A simplex $\tau\in  \tm\times \dt^1$ is an increasing sequence of pairs where the vertices of $\tm$ that appear make up a simplex
	$\sigma_{\tau}\in \tm$. Note that with this structure $(\tm\times \dt^1)_{\bullet}=\tm_{\bullet}\times \Delta[1]$. 
	If all of the vertices of $\tau$ are of the form $(v,0)$, then set $H'(\tau)=f'(\sigma_{\tau})$. 
	Otherwise set $H'(\tau)=f(\sigma_{\tau})$. The $d_i\tau$ is obtained by taking out one of the vertices in the sequence making up $\tau$. There are a few cases to describe $H'(d_i)$. If $\sigma_{\tau}=\sigma_{d_i\tau}$, then $H'(d_i)$ is 
	$id$ or $\beta(\sigma_{tau})$ if the vertices of $d_i\tau$ are all of the form $(v,0)$. If $\sigma_{\tau}\not=\sigma_{d_i\tau}$ then $H'(d_i)$ is either $d_i$ or $\beta(d_i)$ again if all the vertices are $d_i\tau$ are of the form $(v,0)$ and not all the vertices of $\tau$ are of that form. 
	So we have constructed a $\SsS$   map $H'\colon \tm \times \dt^1  \lra \ahd'$ with the 
	correct restriction to $\tm\times \{0,1\}$.

Onto step 2), we let $\cal H'$ be the functor corresponding to $H'$ using Lemma \ref{plu} and 
${\cal H}=\mcalr_{\tm\times \{0,1\}, \tm\times \dt^1 } ({\cal H'})$. By Proposition \ref{mcalr_prop} 
$\cal H$ is a fibrant functor 
$\tm\times \dt^1 \rightarrow \ca$ such that the restrictions to the end of the cylinder are (the functors corresponding to) $f'$ and $f$. 
Again using \ref{plu} we get $H\colon  \tm \times \dt^1 \lra \ahd$ with the correct restrictions and so Lemma \ref{detbyrest} gives the simplicial homotopy between $f$ and $f'$ and 
completes the proof of the lemma.

\end{proof}

Now we can prove the main result of this section.

\begin{proof}[Proof of Proposition~\ref{lambda_prop}] 
	Let $\eta \colon F \stackrel{\sim}{\lra} F'$ be a weak equivalence of $\futma$. Set $\Lambda(F) = f$ and  
	$\Lambda(F') = f'$. For $n \geq 0$ and $\sigma = \langle v_0, \cdots, v_n \rangle \in \tm$,  define $\beta_{\sigma} \colon f_{\sigma} \lra f'_{\sigma}$ as
	$
	\beta_{\sigma}[\{a_0, \cdots, a_s\}] := (\mcalrb\eta) [U_{\langle v_{a_0}, \cdots, v_{a_s}\rangle}].
	$
	By definition $\beta_{\sigma}$ is a weak equivalence for all $\sigma$. It is straightforward to check that the collection $\{\beta_{\sigma}\}_{\sigma}$ satisfies (\ref{cc_disk_eqn}). 
	
	Applying Lemma~\ref{htpy1_lem}, we have that $f$ is homotopic to $f'$. Now assume that there is  a zigzag $\xymatrix{F & \bullet \cdots \bullet \ar[l]_-{\sim} \ar[r]^-{\sim} & F'}$. Applying the first part to each map of that zigzag, and taking the inverse homotopy associated to each backward arrow, we have a homotopy between $\Lambda(F)$ and $\Lambda(F')$.  This proves the proposition. 
\end{proof}

\section{The map $\ttb \colon [\tmd, \ahd] \lra \futma \slash \textgoth{w}$}  \label{theta_section}

In Section~\ref{lambda_section}, or more precisely in Definition~\ref{lamb_defn}, we defined a map $\lamb \colon \fuca \slash \textgoth{w} \lra [\tmd, \ahd]$. The goal of this section is to construct its inverse, and thus get (\ref{res_eqn5}). We continue to use the well-ordering on the set of vertices of  $\tm$ we chose in  the previous section. 

\begin{defn}  \label{tt_defn}
	Define a map $\Theta \colon \htah \lra \fuca$ as $\Theta(f):= F$, where $F \colon \utm \lra \ca$ is defined on objects as $F(U_{\sigma}):= \fs([n])$ for $\sigma = \langle v_{a_0}, \cdots, v_{a_n}\rangle$. On morphisms, it is enough to define $F$ only on  $d^i$'s from Remark~\ref{utm_rmk}. Define 
	$
	F(d^i) := f_{\sigma}(d^i) \colon f_{\sigma}([n]) \lra f_{\sigma}([n]_i).
	$
\end{defn}

\begin{defn} \label{ttb_defn}
	Thanks to  Proposition~\ref{theta_prop} below, the map  $\Theta$ 
	passes to the quotient. Define $\ttb \colon  [\tmd, \ahd] \lra \futma \slash \textgoth{w}$ to be the resulting quotient map. 
\end{defn}   

\begin{prop}  \label{theta_prop}
	Let $f, f' \colon \tmd \lra \ahd$ be two simplicial maps such that $f$ is homotopic to $f'$. Then there exists a zigzag of weak equivalences  $\xymatrix{\Theta(f) & \bullet \cdots \bullet \ar[l]_-{\sim} \ar[r]^-{\sim} & \Theta(f')}$ in $\fuca$.  
\end{prop}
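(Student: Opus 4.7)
The plan is to use the homotopy $H \colon \tmd \times \Delta[1] \lra \ahd$ between $f$ and $f'$ to build a zigzag of weak equivalences in $\fuca$ between $\Theta(f')$ and $\Theta(f)$ by ``flipping one vertex at a time,'' according to the well-order already fixed on the vertices of $\tm$. Enumerate the vertices as $w_1, w_2, \ldots$ and, for each non-degenerate $n$-simplex $\sigma = \langle v_{a_0}, \ldots, v_{a_n}\rangle$ (with vertices listed in ascending well-order) and each $k \geq 0$, set
\[
p_k(\sigma) := |\{i : v_{a_i} \in \{w_1, \ldots, w_k\}\}|, \qquad t^{(k)}_\sigma := 0^{p_k(\sigma)} 1^{n+1-p_k(\sigma)} \in \Delta[1]_n.
\]
Define $F^{(k)} \colon \utm \lra \ca$ on non-degenerate $\sigma$ by $F^{(k)}(U_\sigma) := H(\sigma, t^{(k)}_\sigma)([n])$, with the morphism attached to $U_{d_i \sigma} \hra U_\sigma$ given by the face map $d^i$ in the $\dtn$-diagram $H(\sigma, t^{(k)}_\sigma)$. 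A direct combinatorial check gives $d_i t^{(k)}_\sigma = t^{(k)}_{d_i \sigma}$, so $F^{(k)}$ is well defined; moreover $F^{(0)} = \Theta(f')$, and $F^{(k)}(U_\sigma) = \Theta(f)(U_\sigma)$ whenever every vertex of $\sigma$ lies in $\{w_1, \ldots, w_k\}$.

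For each $k$ I would then construct an intermediate $G_k \in \fuca$ together with natural weak equivalences $G_k \stackrel{\sim}{\ra} F^{(k)}$ and $G_k \stackrel{\sim}{\ra} F^{(k+1)}$. If $w_{k+1}$ appears in $\sigma$ as its $j$-th vertex, put $G_k(U_\sigma) := H(s_j \sigma, 0^{j+1} 1^{n-j+1})([n+1])$, the top object of the $(n+1)$-simplex of $\ahd$ obtained by doubling that vertex. The simplicial identities $d_j s_j = d_{j+1} s_j = \id$ give
\[
d_j H(s_j \sigma, 0^{j+1} 1^{n-j+1}) = H(\sigma, t^{(k)}_\sigma), \qquad d_{j+1} H(s_j \sigma, 0^{j+1} 1^{n-j+1}) = H(\sigma, t^{(k+1)}_\sigma),
\]
so the internal face maps $d^j$ and $d^{j+1}$ of $H(s_j \sigma, 0^{j+1} 1^{n-j+1})$ supply the required pointwise maps $G_k(U_\sigma) \to F^{(k)}(U_\sigma)$ and $G_k(U_\sigma) \to F^{(k+1)}(U_\sigma)$. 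For $\sigma$ not containing $w_{k+1}$, $t^{(k)}_\sigma = t^{(k+1)}_\sigma$, and we set $G_k(U_\sigma) := F^{(k)}(U_\sigma)$ with the identity as both structure maps. Every pointwise map is a weak equivalence by Definition~\ref{ahn_defn}, and every $F^{(k)}, G_k$ takes values in $\ca$ by construction. Concatenating yields the desired zigzag $\Theta(f') = F^{(0)} \stackrel{\sim}{\la} G_0 \stackrel{\sim}{\ra} F^{(1)} \stackrel{\sim}{\la} G_1 \stackrel{\sim}{\ra} \cdots \stackrel{\sim}{\ra} \Theta(f)$.

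The main obstacle will be the rigorous verification that each $G_k$ is functorial on $\utm$ and that $G_k \to F^{(k)}$, $G_k \to F^{(k+1)}$ are natural transformations. This requires a systematic case analysis of how $w_{k+1}$ sits in a face $d_i \sigma$: when $i \neq j$ the vertex $w_{k+1}$ survives (possibly shifting position), and the naturality square should follow from further simplicial identities applied to $H$ on $s_j \sigma$; when $i = j$ the vertex $w_{k+1}$ drops out, so $G_k(U_{d_j \sigma}) = F^{(k)}(U_{d_j \sigma})$ and the relevant square collapses to a simpler one involving only faces of $H(s_j \sigma, 0^{j+1} 1^{n-j+1})$. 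Once this bookkeeping is complete, the isotopy condition follows from Remark~\ref{utm_rmk} and the fact that all structure morphisms are weak equivalences; degenerate simplices of $\tm$ are handled as in Section~\ref{lambda_section}; and in the (non-compact) second-countable case the zigzag is countably infinite but pointwise eventually constant on each $U_\sigma$, so it still makes sense as a zigzag in $\fuca$.
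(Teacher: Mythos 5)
Your construction for the case of finitely many vertices is essentially the paper's own argument in disguise: your simplices $(s_j\sigma,\,0^{j+1}1^{n-j+1})$ are exactly the top-dimensional simplices of the prism decomposition of $\sigma\times\Delta[1]$, and evaluating $H$ (equivalently $\Theta_{\tm\times I}(H)$) on them and on their two distinguished $n$-faces is what the paper does with the functors $F_i^B, F_i^L, F_i^U$ in Lemma~\ref{htpy2_lem}; the only difference is that you organize the flips by the global well-order on vertices rather than by the vertex index of an ambient $\Delta^n$, which is a harmless (arguably cleaner) reindexing. The bookkeeping you defer is genuinely checkable, because every structure map and every naturality square you need lives inside the single $\dt^{n+1}$-diagram $H(s_j\sigma, t)$, so commutativity is just functoriality of that diagram; this part of your plan is sound.

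The genuine gap is the case where $\tm$ has infinitely many simplices (in particular infinitely many vertices), which is precisely where the paper has to work hardest. Your closing claim that the chain $\Theta(f')=F^{(0)}\stackrel{\sim}{\la}G_0\stackrel{\sim}{\ra}F^{(1)}\stackrel{\sim}{\la}\cdots$ ``is countably infinite but pointwise eventually constant, so it still makes sense as a zigzag'' does not establish the statement: the relation asserted in the proposition (and the relation ``$we$'' used throughout) is generated by \emph{finite} strings of weak equivalences, and when the vertex set is infinite no finite stage $F^{(k)}$ equals $\Theta(f)$, nor is there any map from a finite stage to $\Theta(f)$; pointwise eventual stabilization of an infinite alternating chain does not produce an object connected to $\Theta(f)$ by finitely many arrows. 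Some limiting argument is unavoidable here, and it is not formal: the paper (Section~\ref{infinite_case_subsection}) exhausts $\tm$ by finite subcomplexes, applies the finite case to the restrictions, then must (a) rigidify the resulting zigzags into short zigzags compatible with the restriction functors, which uses the fibration/cofibration replacement of Lemma~\ref{fib_lem} and the lifting-up-to-homotopy Lemma~\ref{reg_lem}, and (b) recover $\Theta(f)$ and $\Theta(f')$ from the tower of right Kan extensions via the homotopy-limit comparison of Lemma~\ref{kappa_lem}, whose proof requires a fibrancy analysis of the tower. Without an argument of this kind (or some other device producing a finite zigzag, e.g.\ a bounded-length flipping scheme), your proof only covers finite triangulations, so as written it does not prove the proposition for the non-compact manifolds the paper explicitly allows.
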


The proof of Proposition~\ref{theta_prop} occupies the section and will be given at the end. Our strategy goes through two big steps. In the first one  
we prove the result when the poset $\utm$ is finite.  In the case where $\utm$ is infinite,  the idea of the proof is to write $F=\Theta(f)$ as 
the homotopy limit of a certain diagram $E_1F_1 \lla E_2F_2 \lla \cdots$, where $F_i$ is the restriction of $F$ to a finite subposet $\utmi \subseteq \utm$ 
and $E_iF_i$ is the right Kan extension of $F_i$ along the inclusion $\utmi \hra \utm$.  Using the fact that $F_i$ is weakly equivalent to $F'_i$ by the first step, 
one can deduce the proposition.  

This section is organized as follows. In Section~\ref{finite_case_subsection} we prove Lemma~\ref{htpy2_lem} below, which says that if $f, f' \colon \tmd \lra \ahd$ 
are homotopic, then $\Theta(f)$ is weakly equivalent to $\Theta(f')$ provided that $\tm$ has finite number of simplices. Since the proof of that lemma is technical,  
we begin with a special case: $\tm = \Delta^1$. Section~\ref{infinite_case_subsection} deals with the case where $\tm$ has infinite number of simplices.

\subsection{Case where $\tm$ has finite number of simplices}    \label{finite_case_subsection}

\sloppy

\begin{lem} \label{htpy1-1_lem} 
	Let $\tm = \Delta^1$, and let $f, f' \colon \tmd \lra \ahd$ be simplicial maps that are homotopic.  Then there exists a zigzag of weak equivalences  $\xymatrix{\Theta(f) & \bullet \cdots \bullet \ar[l]_-{\sim} \ar[r]^-{\sim} & \Theta(f')}$ in $\fuca$.
\end{lem}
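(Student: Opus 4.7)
The plan is to exploit the homotopy $H \colon \tmd \times \Delta[1] \to \ahd$ to manufacture a zigzag of weak equivalences in $\fuca$ from $\Theta(f)$ to $\Theta(f')$. Set $A_i := f_{v_i}([0])$ and $A'_i := f'_{v_i}([0])$ for $i \in \{0, 1\}$, and $A_{01} := f_{v_{01}}([1])$, $A'_{01} := f'_{v_{01}}([1])$. By the definition of $\Theta$, the functor $\Theta(f) \in \fuca$ is the span $A_0 \leftarrow A_{01} \rightarrow A_1$ in $\ca$, and $\Theta(f')$ is the span $A'_0 \leftarrow A'_{01} \rightarrow A'_1$.

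First I would harvest the key simplices of $\ahd$ appearing in the image of $H$. For $i \in \{0, 1\}$ the 1-simplex $H(v_i, 01) \in \ah_1$ is a span $A_i \xleftarrow{\sim} B_i \xrightarrow{\sim} A'_i$; the 1-simplex $H(v_{01}, 01) \in \ah_1$ is a ``diagonal'' span $A_0 \xleftarrow{\sim} E \xrightarrow{\sim} A'_1$; and for $i \in \{0, 1\}$ the 1-simplex $H(s_0 v_i, 01) \in \ah_1$ is a span $A_i \xleftarrow{\sim} C_i \xrightarrow{\sim} A'_i$. The crucial data, however, come from the two non-degenerate 2-simplices of $\tmd \times \Delta[1]$, namely $\sigma_1 := (s_0 v_{01}, s_1 \cdot 01)$ and $\sigma_2 := (s_1 v_{01}, s_0 \cdot 01)$, which triangulate the unit square along the diagonal $(v_{01}, 01)$. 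Their images $T_j := H(\sigma_j) \in \ah_2$ are fibrant $\dt^2$-diagrams in $\ca$ whose faces recover the spans above: $T_2$ has vertices $(A_0, A_1, A'_1)$, edges $(A_{01}, E, C_1)$, and center $X_2 := T_2([2]) \in \ca$, while $T_1$ has vertices $(A_0, A'_0, A'_1)$, edges $(C_0, E, A'_{01})$, and center $X_1 := T_1([2]) \in \ca$. By condition (a) of Definition~\ref{ahn_defn}, every morphism in each $T_j$ and in every $H(-,01)$ is a weak equivalence in $\ca$.

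I would then construct six intermediate functors in $\fuca$, each specified by the triple of its values at $(U_{v_0}, U_{v_0 v_1}, U_{v_1})$:
\begin{gather*}
G_1 = (A_0, X_2, A_1),\quad G_2 = (A_0, X_2, C_1),\quad G_3 = (A_0, E, A'_1),\\
G_4 = (A_0, X_1, A'_1),\quad G_5 = (C_0, X_1, A'_1),\quad G_6 = (A'_0, X_1, A'_1),
\end{gather*}
with the morphism $G_k(U_{v_0 v_1}) \to G_k(U_{v_i})$ in each being the appropriate face morphism in $T_j$, $H(v_{01}, 01)$, or $H(s_0 v_i, 01)$. The zigzag of natural transformations is then
\[
\Theta(f) \; \xleftarrow{\sim} \; G_1 \; \xleftarrow{\sim} \; G_2 \; \xrightarrow{\sim} \; G_3 \; \xleftarrow{\sim} \; G_4 \; \xleftarrow{\sim} \; G_5 \; \xrightarrow{\sim} \; G_6 \; \xrightarrow{\sim} \; \Theta(f'),
\]
where each arrow restricts to the identity at every vertex on which source and target agree, and to the corresponding face map of some $T_j$ or $H(-,01)$ at every vertex at which they differ.

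The main obstacle is verifying naturality of each arrow, that is, the commutativity of the square attached to each inclusion $U_{v_0 v_1} \hookrightarrow U_{v_i}$ in $\utm$. In every case, the two composites to be compared factor, by the functoriality of $T_j \colon \dt^2 \to \ca$, through the single morphism $T_j([2]) \to T_j(\{i\})$ induced by $\{i\} \hookrightarrow [2]$, or analogously through a single morphism of a 1-simplex $H(-,01) \colon \dt^1 \to \ca$; this forces commutativity. Once naturality is established, pointwise weak-equivalence of each arrow is automatic, since every non-identity component is a face morphism in some simplex of $\ahd$. Concatenating the pointwise weak equivalences yields the required zigzag in $\fuca$.
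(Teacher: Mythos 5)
Your proposal is correct and follows essentially the same route as the paper: the two $2$-simplices $T_1,T_2$ you extract from $H$ are exactly the images of the barycenters $(v_{001},011)$ and $(v_{011},001)$ in the paper's subdivision of $\Delta^1\times[0,1]$, your $G_2, G_3, G_5$ coincide with the paper's $F_0^B, F_0^U=F_1^L, F_1^B$, and your extra objects $G_1, G_4, G_6$ merely factor the paper's single natural transformations into two steps, with naturality established by the same functoriality argument. (The only blemish is the redundant listing of $H(v_i,01)$ and $H(s_0v_i,01)$, which are the same $1$-simplices; this is harmless since the $B_i$ are never used.)
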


\begin{proof}
	Let $H$ be a homotopy between $f$ and $f'$.  We only consider the homotopy restricted to non-degenerate simplices. 
	For the sake of simplicity,  we will often use the notation $v_{0 \cdots n} := \langle v_0, \cdots, v_n \rangle$ in this proof and the next one. Consider the poset  $\udo = \{ \xymatrix{ U_{v_0} \ar[r]^-{d^1}    & U_{v_{01}}   &  U_{v_1} \ar[l]_-{d^0}}\}$. Also consider  Figure~\ref{sdt_splx}, which is a subdivision of $\Delta^1 \times [0, 1]$ into two $2$-simplices, namely 
	\[
	\langle (v_0, 0),  (v_0, 1), (v_1, 1)\rangle \quad \textup{and} \quad \langle (v_0, 0), (v_1, 0), (v_1, 1)\rangle
	\]
	\begin{figure}[!ht]
		\centering
		\includegraphics[scale = 0.7]{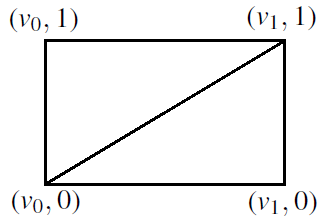}
		\caption{A subdivision of $\Delta^1 \times [0, 1]$ into two $2$-simplices} \label{sdt_splx}
	\end{figure}
    We now explain the algorithm that produces functors out of $H$ and Figure~\ref{sdt_splx}. 
	Define the \textit{barycenter} of $\sigma_0:= \langle (v_0, 0), (v_1, 0), (v_1, 1)\rangle$ as the pair $(v_{011}, 001)$, and that of $\sigma_1:=\langle (v_0, 0),  (v_0, 1), (v_1, 1)\rangle$ as the pair $(v_{001}, 011)$. In general, the barycenter of $\langle (v_{i_0}, j_0), (v_{i_1}, j_1), (v_{i_2}, j_2)\rangle$ is defined to be $(v_{i_0i_1i_2}, j_0j_1j_2)$.  Applying the homotopy $H$ to those barycenters, we get the commutative diagram (\ref{big_eqn}) below in which we make the following simplifications at the level of notation. We write $H(v_{011}, 001)$ for $H(v_{011}, 001)(\{0, 1,2\})$, $H(v_{01}, 01)$ for $H(v_{01}, 01)(\{0,1\})$, and so on. Also we write $d_i$ for $H(-, -)(d^i)$. 
	
	\begin{eqnarray} \label{big_eqn}
	\xymatrix{H(v_0, 1)   &  & H(v_{01}, 11)  \ar[ll]_-{d_1}^-{\sim}  \ar[rr]^-{d_0}_-{\sim}   &  & H(v_1, 1)  \\
		&     &   H(v_{001}, 011) \ar[u]^-{\sim}_-{d_0}  \ar[lld]_-{d_2}^-{\sim}  \ar[d]^-{d_1}_-{\sim}   &      &       \\
		H(v_{00}, 01)  \ar[uu]^-{d_0}_-{\sim} \ar[dd]^-{\sim}_-{d_1}   &     &   H(v_{01}, 01)  \ar[rruu]^-{d_0}_-{\sim}  \ar[lldd]^-{\sim}_-{d_1}   &    &  H(v_{11}, 01)  \ar[uu]^-{\sim}_-{d_0} \ar[dd]^-{d_1}_-{\sim}    \\
		&     &   H(v_{011}, 001)  \ar[u]^-{\sim}_-{d_1}  \ar[d]^-{d_2}_-{\sim}  \ar[rru]^-{\sim}_-{d_0}   &    &    \\
		H(v_0, 0)      &        &   H(v_{01}, 00)  \ar[ll]_-{\sim}^-{d_1} \ar[rr]^-{\sim}_-{d_0}   &      &  H(v_1, 0)   }
	\end{eqnarray}
	
	Now define the bottom  of (\ref{big_eqn}) as a functor $F_0^L \colon \udo \lra \ca$, where the letter \lq\lq$L$\rq\rq{} stands for \lq\lq lower\rq\rq{}. Specifically, we have 
	\[
	F_0^L (U_{v_0}) := H(v_0, 0), \  F_0^L (U_{v_{01}}) := H(v_{01}, 00), \  F_0^L (U_{v_1}) := H(v_1, 0), \ \text{and} \ F_0^L (d^i):= d_i. 
	\] 
	Observe
	that functor is the same as $F$. The functor associated to the barycenter of the simplex $\sigma_0$ is defined as (\lq\lq B\rq\rq{} stands for \lq\lq barycenter\rq\rq{})
	\[
	F_0^B := \left\{ \xymatrix{H(v_{0}, 0)    &    &  H(v_{011}, 001)  \ar[ll]_-{d_1d_2}^-{\sim} \ar[rr]^-{d_0}_-{\sim}  &  & H(v_{11}, 01) } \right\},  
	\]
	while the functor corresponding to its upper face (which is the same as the functor associated to the lower face of $\sigma_1$) is defined as
	\[
	F_0^U = F_1^L := \left\{ \xymatrix{H(v_{0}, 0)    &    &  H(v_{01}, 01)  \ar[ll]_-{d_1}^-{\sim} \ar[rr]^-{d_0}_-{\sim}  &  & H(v_1, 1) } \right\}.
	\]
	Here \lq\lq U\rq\rq{} stands for \lq\lq upper\rq\rq{} of course. Lastly, the functor corresponding to the barycenter of the $2$-simplex $\sigma_1$ is defined as 
	\[
	F_1^B := \left\{ \xymatrix{H(v_{00}, 01)    &    &  H(v_{001}, 011)  \ar[ll]_-{d_2}^-{\sim} \ar[rr]^-{d_0d_0}_-{\sim}  &  & H(v_1, 1) } \right\},
	\]
	and that associated to its upper face, denoted $F_1^U$, is defined as the top of (\ref{big_eqn}).  Clearly one has the  following zigzag of  weak equivalences, which are all natural since (\ref{big_eqn}) is commutative. 
	\[
	\xymatrix{F= F_0^L  & &   F_0^B \ar[ll]_-{(id, d_2, d_1)}^-{\sim} \ar[rr]^-{(id, d_1, d_0)}_-{\sim} & & F_0^U=F_1^L & &  F_1^B   \ar[ll]_-{(d_1, d_1, id)}^-{\sim} \ar[rr]^-{(d_0, d_0, id)}_-{\sim} & & F_1^U =F'.}
	\]
	This ends the proof. 
\end{proof}

\begin{lem} \label{htpy2_lem} 
	Let $f, f' \colon \tmd \lra \ahd$ be simplicial maps that are homotopic. Assume  that  $\tm$ has finite number of simplices. Then there exists a zigzag of weak equivalences  $\xymatrix{\Theta(f) & \bullet \cdots \bullet \ar[l]_-{\sim} \ar[r]^-{\sim} & \Theta(f')}$ in $\fuca$.
\end{lem}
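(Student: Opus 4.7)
The plan is to generalize the proof of Lemma~\ref{htpy1-1_lem} from the case $\tm = \Delta^1$ to an arbitrary finite simplicial complex using the prism decomposition. Fix a homotopy $H \colon \tmd \times \Delta[1] \lra \ahd$ between $f$ and $f'$. For each $n$-simplex $\sigma = \langle v_0, \ldots, v_n\rangle$ of $\tm$ (ordered using the well-order on vertices), the product $\sigma \times [0,1]$ decomposes canonically into $n+1$ top-dimensional prism simplices $\Sigma_\sigma^k$ ($k=0,\ldots,n$), where $\Sigma_\sigma^k$ has vertices $(v_0,0), \ldots, (v_k,0), (v_k,1), \ldots, (v_n,1)$. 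These prism simplices glue consistently along shared faces coming from $\tm$, producing a simplicial subdivision of $|\tm| \times [0,1]$.

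Enumerate all top-dimensional prism simplices of this subdivision as $\Sigma_1, \Sigma_2, \ldots, \Sigma_N$ in an order refining the adjacency relation, so that each consecutive pair $\Sigma_i, \Sigma_{i+1}$ shares an $n$-face which is the ``upper'' face (in the $[0,1]$-direction) of $\Sigma_i$ and the ``lower'' face of $\Sigma_{i+1}$. For each $\Sigma_i$, mimicking the construction of $F_0^L$, $F_0^B$, $F_0^U$ in the proof of Lemma~\ref{htpy1-1_lem}, I construct three functors $F^L_i, F^B_i, F^U_i \colon \utm \lra \ca$ by applying $H$ to appropriate ``simplex-level'' pairs: $F^B_i$ uses the barycenter of $\Sigma_i$, while $F^L_i$ and $F^U_i$ use its lower and upper $n$-faces (in the $[0,1]$-direction). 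On objects of $\utm$ unrelated to the currently processed $\Sigma_i$, the functors $F^L_i, F^B_i, F^U_i$ take values coming from neighboring already-processed (contributing $f'$-values) or not-yet-processed (contributing $f$-values) prism simplices, arranged so that $F^U_i = F^L_{i+1}$ holds exactly.

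This yields the zigzag
\[
F = F^L_{1} \stackrel{\sim}{\lla} F^B_{1} \stackrel{\sim}{\lra} F^U_{1} = F^L_{2} \stackrel{\sim}{\lla} F^B_{2} \stackrel{\sim}{\lra} \cdots \stackrel{\sim}{\lra} F^U_{N} = F'
\]
in $\fuca$. Each arrow is objectwise a face map $d_i$ applied to a value of $H$, which is a weak equivalence because $H$ takes values in $\ahd$, where every morphism of every $n$-simplex is a weak equivalence. Commutativity of the squares that assemble these objectwise maps into natural transformations follows from $H$ being a simplicial map, exactly as in diagram~(\ref{big_eqn}) of Lemma~\ref{htpy1-1_lem}.

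The main obstacle will be the combinatorial bookkeeping needed to define the intermediate functors $F^L_i, F^B_i, F^U_i$ globally on $\utm$ in a coherent way, and in particular to exhibit a ``shelling-type'' total order on the top-dimensional prism simplices that guarantees $F^U_i = F^L_{i+1}$ at every step. The finiteness of $\tm$ is essential here, since it makes such an enumeration possible; this is exactly why the present lemma is stated under the finiteness hypothesis, with the infinite case postponed to Section~\ref{infinite_case_subsection}. Once the order is in place, the remaining verifications reduce objectwise to the $\tm = \Delta^1$ situation already handled in Lemma~\ref{htpy1-1_lem}.
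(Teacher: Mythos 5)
Your reduction to a prism decomposition of $|\tm|\times[0,1]$ is the right general idea, and your per-prism construction over a single base simplex is exactly Lemma~\ref{htpy1-1_lem}; but the gluing step you postpone as ``bookkeeping'' is where the argument as stated breaks. The upper and lower faces (in the $[0,1]$-direction) of a prism simplex over a base simplex $\sigma$ of $\tm$ lie over all of $\sigma$; consequently two top-dimensional prism simplices with \emph{different} base simplices never share such a face, so the total order you require --- consecutive $\Sigma_i$, $\Sigma_{i+1}$ sharing an $n$-face that is the upper face of one and the lower face of the other --- does not exist as soon as $\tm$ has more than one maximal simplex (and ``top-dimensional'' is not even uniform if $\tm$ is not pure). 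There is also a coherence problem with your intermediate functors: a functor on $\utm$ obtained by ``partially pushing through $H$'' amounts to assigning a level in $\{0,1\}$ to each vertex of $\tm$ (with at most one distinguished vertex carrying both levels, giving the barycenter stage), and this assignment must be made globally on the vertex set, not prism simplex by prism simplex, since a vertex shared by two base simplices cannot be at level $1$ for one and at level $0$ for the other. So the stages of the zigzag are forced to be indexed by a sweep over vertices rather than over top-dimensional prism simplices.

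This is exactly what the paper's proof does: it regards the finite complex $\tm$ as a subcomplex of $\Delta^n$, takes the standard decomposition of $\Delta^n\times[0,1]$ into the $n+1$ simplices $\sigma_i$, and for each $i$ defines three functors $\phi^L_i,\phi^B_i,\phi^U_i\colon P(\tm)\to P(\sigma_i)\cap P(\tm\times I)$ by an explicit case analysis on the position of the threshold vertex $v_{n-i}$ relative to the vertices of each simplex $\lambda$ of $\tm$; composing with $\overline{H}:=\Theta_{\tm\times I}(H)$ gives $F^L_i\stackrel{\sim}{\longleftarrow}F^B_i\stackrel{\sim}{\longrightarrow}F^U_i$ with $F^U_i=F^L_{i+1}$, $F^L_0=\Theta(f)$ and $F^U_n=\Theta(f')$. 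In other words there is one stage per vertex of the ambient simplex (equivalently, one lifts one vertex at a time from level $0$ to level $1$), and at each stage the values on simplices not containing the current vertex are left unchanged; this global vertex-level mechanism is precisely what your proposal is missing. If you replace your shelling order on prism simplices by such a vertex-by-vertex sweep, the rest of your argument (each arrow is a weak equivalence because $\overline{H}$ sends every morphism to a weak equivalence, and naturality follows from $H$ being simplicial) does go through.
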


\begin{proof}
	Since the simplicial complex $\tm$ has finite number of simplices, one can  assume without loss of generality that  it is a subcomplex of $\Delta^n$ for some $n$. 
	Set $F = \Theta(f)$ and $F' = \Theta(f')$, and let $H \colon \tmd \times \Delta[1] \lra \ahd$ be a homotopy from $f$ to $f'$.  
	We need to construct a zigzag of weak equivalences between the  functors $F, F' \colon \utm \lra \ca$. Throughout we are only using the non-degenerate simplices in $\tmd\times \Delta[1]$. Following the special case above, 
	the idea is to first subdivide $\dn \times [0, 1]$ into $(n+1)$ $(n+1)$-simplices in a suitable way.  
	Each $(n+1)$-simplex will produce three functors (one for the upper face, one for the lower face, and one for the barycenter), and two natural transformations like $\xymatrix{\bullet & \bullet \ar[l]_-{\sim} \ar[r]^-{\sim}  & \bullet}$. 
	
	Let us consider the prism $\dn \times I$, I = [0, 1], and set 
	\[
	\dn \times \{t\} = \langle (v_0, t), \cdots, (v_n, t) \rangle, \quad t \in \{0, 1\}.
	\]
	We will sometimes write $v_j$ for $(v_j, 0)$. For $0 \leq i \leq n$, define an $(n+1)$-simplex 
	\[
	\sigma_i := \big\langle (v_0, 0), \cdots, (v_{n-i}, 0), (v_{n-i}, 1), \cdots, (v_n, 1)\big\rangle.
	\]
	Observe that 
	$\dn \times [0, 1]$ is the union of (geometric) simplices $\sigma_i, 0 \leq i \leq n$, each intersecting the next in an $n$-simplex face. This induces a triangulation $\dt\times I$ on $M\times I$. 
	As we said earlier, each $\sigma_i$ produces three functors $F_i^L,  F_i^B, F_i^U \colon \utm \lra \ca$ that we now define. 
	Consider the diagram 
	\[ 
	\xymatrix{\utm \ar[r]^-{\Psi_{\tm}}_-{\cong} & P(\tm) \ar@<3ex>[r]^-{\phi_i^U} \ar@<0ex>[r]^-{\phi_i^B} \ar@<-3ex>[r]_-{\phi_i^L}  & P(\sigma_i) \cap P(\tm \times I) \  \ar@{^{(}->}[r]^-{\iota} & P(\tm \times I) \ar[r]^-{\Phi}_-{\cong} & \mcalu(\tm \times I)  \ar[d]^-{\overline{H}} \\
		&  &   &   & \ca}
	\]
	defined as follows.

	$\bullet$ $P(-)$ is taking the poset associated to a triangulation. 
	
	$\bullet$ The functor $\Psi_{\tm}$ is defined as $\Psi_{\tm}(U_{\lambda}):= \lambda$. As discussed above Remark 
	\ref{utm_pt_rmk} 
	this is an isomorphism of categories. The functor $\Phi$ is defined as $\Phi:= \Psi_{\tm \times I}^{-1}$. 
	
	$\bullet$ $\iota$ is just the inclusion functor. 
	
	$\bullet$ Defining $\overline{H}$. Recall the map $\Theta$ from Definition~\ref{tt_defn}, and let us denote it by $\Theta_{\tm}$. Then one has the  map 
	\[
	\Theta_{\tm \times I} \colon \underset{\textup{sSet}}{\textup{Hom}}((\tm \times I)_{\bullet}, \ahd) \lra \mcalf(\mcalu(\tm \times I); \ca). 
	\]
	The functor $\overline{H}$ is defined as $\overline{H}:= \Theta_{\tm \times I} (H)$.
	
	$\bullet$ We now define the functor $\phi^B_i \colon P(\tm) \lra P(\sigma_i) \times P(\tm \times I)$. Let $\lambda=\langle v_{a_0}, \cdots, v_{a_s}\rangle$ be an object of $P(\tm)$. We need to deal with four cases.
	\begin{enumerate}
		\item[-] If  $n-i = a_j$ for some $j$, define 
		\[
		\phi^B_i(\lambda):= \big\langle (v_{a_0}, 0), \cdots, (v_{a_j}, 0), (v_{a_j}, 1), (v_{a_{j+1}}, 1), \cdots, (v_{a_s}, 1)\big\rangle.
		\]
		\item[-] If $a_j < n-i < a_{j+1}$ for some $j$, define
		\[
		\phi^B_i(\lambda):= \big\langle (v_{a_0}, 0), \cdots, (v_{a_j}, 0), (v_{a_{j+1}}, 1), \cdots, (v_{a_s}, 1)\big\rangle.
		\] 
		\item[-] If $n-i < a_j$ for all $j$, define 
		\[
		\phi^B_i(\lambda):= \big\langle (v_{a_0}, 1), \cdots, (v_{a_s}, 1)\big\rangle.
		\]
		\item[-] If $n-i > a_j$ for all $j$, define 
		\[
		\phi^B_i(\lambda):= \big\langle (v_{a_0}, 0), \cdots, (v_{a_s}, 0)\big\rangle. 
		\]
	\end{enumerate}   
	$\bullet$ Define $\phi^U_i$ as  
	\[
	\phi^U_i(\lambda):= \big\langle (v_{a_0}, 0), \cdots, \widehat{(v_{a_j}, 0)}, (v_{a_j}, 1), (v_{a_{j+1}}, 1), \cdots, (v_{a_s}, 1)\big\rangle 
	\] 
	if $n-i=a_j$ for some $j$, and $\phi^U_i (\lambda):= \phi^B_i(\lambda)$ otherwise. 
	
	$\bullet$ Define $\phi^L_i$ as  
	\[
	\phi^L_i(\lambda):= \big\langle (v_{a_0}, 0), \cdots, (v_{a_j}, 0), \widehat{(v_{a_j}, 1)}, (v_{a_{j+1}}, 1), \cdots, (v_{a_s}, 1)\big\rangle 
	\] 
	if $n-i=a_j$ for some $j$, and $\phi^L_i (\lambda):= \phi^B_i(\lambda)$ otherwise. 
	
	For $X \in \{B, L, U\}$, define $F^X_i$ as the composite
	\[
	F^X_i:= \overline{H} \circ  \iota \circ \phi^X_i \circ \psi.
	\]
	From the definitions, there are two natural transformations $\alpha_i \colon \phi^L_i \lra \phi^B_i$ and $\beta_i \colon \phi^U_i \lra \phi^B_i$ which are nothing but the inclusions (that is, for every $\lambda \in P(\tm)$, the components of $\alpha_i$ and $\beta_i$ at $\lambda$ are the obvious inclusions). These give rise to a zigzag $F^L_i \lla F^B_i \lra F^U_i$ (remember $\overline{H}$ is contravariant). Each map from that zigzag is a weak equivalence since the functor $\overline{H}$ sends every morphism to a weak equivalence by the definitions. Clearly, one has $F^L_0 = F, F^U_n = F'$, and $F^U_i = F^L_{i+1}$ for all $i$. This ends the proof. 
\end{proof}

\subsection{Case where $\tm$ has infinite number of simplices}   \label{infinite_case_subsection}

In this section we assume that $\tm$ has infinitely many simplices. Since $M$ is second-countable by assumption (see the introduction), 
it follows that $\tm$ has countably many simplices. This enables us to choose a family 
\[
\mcalt^{M_1} \subseteq  \cdots \subseteq \mcalt^{M_i}  \subseteq \mcalt^{M_{i+1}} \subseteq  \cdots \subseteq \tm 
\] 
of subcomplexes of $\tm$ satisfying the following two conditions: 
\begin{enumerate}
	\item[(a)] Each $\mcalt^{M_i}$ has finitely many simplices; 
	\item[(b)] $P(\mcalt^{M_{i+1}}) = P(\mcalt^{M_i}) \cup \{z\}$, where $z \notin P(\mcalt^{M_i})$  and for every simplex $x$ of  $\mcalt^{M_i}$, $z$ is not a face of $x$.  
\end{enumerate}
Let $\{\utmi\}_{i \geq 1}$ be the corresponding family of subposets of $\utm$. 
(Observe one has $\bigcup_i \utmi = \utm$.) Let $R_i \colon \utmi \hra \utm$ and $R_{ij} \colon \utmj \hra \utmi$ denote the inclusion functors. Consider the pair 
\[
R_i^* \colon \xymatrix{\mcalm^{\utm} \ar@<1ex>[rr] &  & \mcalm^{\utmi} \ar@<1ex>[ll]}: E_i,
\]
where $R_i^*$ is the restriction functor (that is, $R_i^* (F):= F|\utmi$), and $E_i$ is nothing but the right Kan extension functor along $R_i$, that is, 
\begin{eqnarray}  \label{ei_eqn}
E_i(F)(x):= \underset{y \ra x, y \in \utmi}{\lim} F(y) = \underset{\utmi \downarrow x}{\lim} \mathbb{D}_i.
\end{eqnarray}
Here $\mathbb{D}_i \colon \utmi \downarrow x \lra \mcalm$ is the functor that sends $(y \lra x)$ to $F(y)$. Clearly $E_i$ is right adjoint to $R_i^*$.  Similarly, we have a diagram $\mathbb{D}_{ij}$, and a pair of adjoint functors
\[
R_{ij}^* \colon \xymatrix{\mcalm^{\utmi} \ar@<1ex>[rr] &  & \mcalm^{\utmj} \ar@<1ex>[ll]}: E_{ij},
\]
where $E_{ij}(F)(x)$ is of course the limit of $\mathbb{D}_{ij}$. 

Before we prove Proposition~\ref{theta_prop}, we need five lemmas. 

\begin{lem}  \label{facts_lem}
	Let $j \leq i$. Then 
	\begin{enumerate}
		\item[(i)] For every $F \in \mcalm^{\utmj}$, $E_i (E_{ij} (F))$ is naturally isomorphic to $E_j(F)$. That is, $E_i (E_{ij} (F)) \cong E_j(F)$. 
		\item[(ii)] Let $F \in \mcalm^{\utm}$ be a fibrant diagram. 
		Then the diagram $\mathbb{D}_i$  above as well as $\mathbb{D}_{ij}$ is fibrant for any $x \in \utm$.  
	\end{enumerate}
\end{lem}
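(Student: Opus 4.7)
The plan is to handle the two parts independently: (i) is essentially formal, following from the adjunction between restriction and right Kan extension, while (ii) reduces to the fibrancy criterion of Proposition~\ref{model_prop} once the shape of the relevant comma categories is identified.

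For (i), I would observe that the inclusions compose as $R_j = R_i \circ R_{ij}$, so the restrictions compose as $R_j^* = R_{ij}^* \circ R_i^*$. Since $E_i$, $E_{ij}$, $E_j$ are the right adjoints of $R_i^*$, $R_{ij}^*$, $R_j^*$ respectively, and right adjoints of composites compose in the reverse order, the composite $E_i \circ E_{ij}$ is a right adjoint to $R_j^*$. Uniqueness of adjoints then yields a natural isomorphism $E_i(E_{ij}(F)) \cong E_j(F)$. One could equivalently derive this directly from the limit formula (\ref{ei_eqn}) by a routine cofinality argument, exploiting that $\utmj \subseteq \utmi$ so that the diagonal $y \mapsto (y,y)$ is initial in the category of composable arrows $y \to z \to x$ with $z \in \utmi$ and $y \in \utmj$.

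The heart of (ii) is a structural description of the comma category $\utmi \downarrow x$. An object is some $U_\tau$ where $\tau$ is simultaneously a simplex of $\mcalt^{M_i}$ and a face of $x$. Let $S$ denote the set of faces of $x$ lying in $\mcalt^{M_i}$; because $\mcalt^{M_i}$ is a subcomplex of $\tm$, $S$ is closed under taking further faces and is therefore itself a subcomplex of the simplex $x$. Combined with the isomorphism $\utmi \cong P(\mcalt^{M_i})$ of Remark~\ref{utm_pt_rmk}, this yields $\utmi \downarrow x \cong P(S)$, under which $\mathbb{D}_i$ is identified with the restriction of $F$ along $P(S) \hookrightarrow P(\tm)$.

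Fibrancy of $\mathbb{D}_i$ is now immediate from Proposition~\ref{model_prop}: for any $\tau \in P(S)$, the matching object of $\mathbb{D}_i$ at $\tau$ is indexed by the proper faces of $\tau$ belonging to $S$, and since $S$ is a subcomplex these are all of $P(\partial \tau)$. Hence the matching object of $\mathbb{D}_i$ at $\tau$ coincides with the matching object of $F$ at $U_\tau$, and the matching map for $\mathbb{D}_i$ is literally the matching map for $F$, which is a fibration by hypothesis. Applying the same argument with $R_i^*F$ (itself fibrant by the same subcomplex reasoning) in place of $F$ handles $\mathbb{D}_{ij}$. The only place the hypothesis really bites is this closure-under-faces property of $\mcalt^{M_i}$; without it, a proper face of $\tau$ could be lost upon restriction to $\utmi$, and the matching map of $\mathbb{D}_i$ would no longer directly reflect one coming from $F$.
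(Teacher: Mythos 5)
Your proposal is correct and follows essentially the same route as the paper: part (i) via the composability of restrictions and uniqueness of right adjoints (the paper simply says it follows from the definitions), and part (ii) by identifying $\utmi \downarrow x$ with the poset of faces of $\sigma_x$ lying in the subcomplex $\mcalt^{M_i}$, so that $\mathbb{D}_i$ is a restriction of the fibrant diagram $F$ to a face-closed subposet and inherits fibrancy, with $\mathbb{D}_{ij}$ handled likewise. Your extra care in checking that the matching objects of the restricted diagram agree with those of $F$ fills in exactly the step the paper leaves implicit.
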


\begin{proof}
	Part (i) follows immediately from the definitions. To see part (ii), let $x \in \utm$. Then, by Definition~\ref{utm_defn}, there exists a unique simplex $\sigma_x \in \tm$ such that $x = U_{\sigma_x}$. Associated with $\sigma_x$ is the poset $\mcalu (\sigma_x) = \{U_{\lambda}| \ \lambda \subseteq \sigma_x\}$. Clearly we have $\utmi \downarrow x = \utmi \cap \mcalu(\sigma_x)$. So  $\mathbb{D}_i$ is nothing but the restriction of $F$ to  $\utmi \cap \mcalu(\sigma_x)$, which is definitely fibrant since $F$ is fibrant by assumption. Likewise, one can show that $\mathbb{D}_{ij}$ is fibrant. 
\end{proof}

Let $\Theta$ be the map that appears in Proposition~\ref{theta_prop}, and let $f, f' \colon \tmd \lra \ahd$ be simplicial maps that are homotopic. Consider the categories $\mcalm$ and $\ca$ as in Notation~\ref{CAnotation}, 
and set $F = \Theta(f)$ and $F'= \Theta (f')$. By the definitions, the functors $F, F' \in \ca^{\utm} \subseteq \mcalm^{\utm}$ are both fibrant and cofibrant. For convenience, we will regard these as functors into $\mcalm$. 
Define $F_i := F|\utmi$ and $F'_i := F'|\utmi$. 

\begin{lem} \label{fib_lem}
	For every $i$ there exist $\fib \in \mcalm^{\utmi}$ and two weak equivalences $\xymatrix{F_i  & \fib \ar[l]_-{\alpha_i}^-{\sim}  \ar[r]^-{\alpha'_i}_-{\sim}   &   F'_i}$  satisfying the following two conditions: (a) $\fib$ is cofibrant. (b) The map $(\alpha_i, \alpha'_i) \colon \fib \lra F_i \times F'_i$ is a fibration.  
\end{lem}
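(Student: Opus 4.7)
The plan is to first reduce to the case of a finite triangulation via Lemma~\ref{htpy2_lem}, then collapse the resulting zigzag of weak equivalences to a single roof, and finally apply a functorial factorization in the injective model structure on $\mcalm^{\utmi}$ from Proposition~\ref{model_prop} to produce $\fib$.

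Because $\mcalt^{M_i}$ is a subcomplex of $\tm$, the homotopy $H \colon \tmd \times \Delta[1] \lra \ahd$ restricts along the inclusion $\mcalt^{M_i}_{\bullet} \hra \tmd$ to a homotopy between $f|_{\mcalt^{M_i}_{\bullet}}$ and $f'|_{\mcalt^{M_i}_{\bullet}}$. Since $\mcalt^{M_i}$ has only finitely many simplices by condition~(a), Lemma~\ref{htpy2_lem} produces a zigzag of weak equivalences $F_i \lla G_1 \lra G_2 \lla \cdots \lra F'_i$ in $\mcalf(\utmi; \ca)$, where we use the natural identifications $F_i = F|_{\utmi} = \Theta_{\mcalt^{M_i}}(f|_{\mcalt^{M_i}_{\bullet}})$ and similarly for $F'_i$.

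Next I would collapse this zigzag to a single roof $F_i \lla W \lra F'_i$ of weak equivalences. Each $G_j$ takes values in $\ca$, hence is objectwise fibrant-cofibrant and in particular cofibrant in the injective model structure on $\mcalm^{\utmi}$. For each intermediate cospan $A \lra C \lla B$ appearing in the zigzag, I would factor $A \lra C$ as a trivial cofibration followed by a fibration $A \hra A' \twoheadrightarrow C$; by 2-out-of-3 the second map is then a trivial fibration. The strict pullback $P := A' \times_C B$ models the homotopy pullback: the projection $P \to B$ is a weak equivalence as the base change of the trivial fibration $A' \to C$, from which $P \to C$ is a weak equivalence, and then 2-out-of-3 applied to $P \to A' \to C$ (whose last factor is a weak equivalence) forces $P \to A'$ to be a weak equivalence. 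Iterating this operation from either end shortens the zigzag by one cospan at a time until only a single roof $F_i \lla W \lra F'_i$ remains; replacing $W$ by a cofibrant replacement in $\mcalm^{\utmi}$ if necessary, we may assume $W$ is cofibrant.

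Finally, consider the combined map $(w, w') \colon W \lra F_i \times F'_i$ built from the two legs of the roof, and apply the functorial factorization in the injective model structure to obtain $W \hra \fib \twoheadrightarrow F_i \times F'_i$, with the first map a trivial cofibration and the second a fibration. Let $\alpha_i$ and $\alpha'_i$ be the compositions of $\fib \twoheadrightarrow F_i \times F'_i$ with the two projections. Then $(\alpha_i, \alpha'_i)$ is a fibration by construction, giving~(b); $\fib$ is cofibrant since $W$ is cofibrant and $W \hra \fib$ is a cofibration, giving~(a); and the composite $W \hra \fib \xrightarrow{\alpha_i} F_i$ equals the original weak equivalence $w$, so 2-out-of-3 forces $\alpha_i$ to be a weak equivalence, and similarly for $\alpha'_i$. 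The main obstacle is the collapsing step, since strict pullbacks along arbitrary maps in $\mcalm^{\utmi}$ need not preserve weak equivalences; the argument sketched above sidesteps this by factoring first so that the pullback is always taken along a genuine trivial fibration, whose base changes remain trivial fibrations.
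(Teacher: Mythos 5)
Your outer skeleton is the paper's: restrict the homotopy to $\mcalt^{M_i}_{\bullet}$, apply Lemma~\ref{htpy2_lem} to the finite complex $\mcalt^{M_i}$ to get a zigzag of weak equivalences between $F_i$ and $F'_i$, collapse it to a single roof, and then factor the induced map to $F_i \times F'_i$. Your last step is correct and standard: factoring $(w,w')\colon W \lra F_i\times F'_i$ as a trivial cofibration followed by a fibration gives (a), (b) and, by two-out-of-three, the two weak equivalences. The problem is the collapsing step. For a cospan $A \lra C \lla B$ in the zigzag, your $P := A'\times_C B$ comes only with projections $P \lra A'$ and $P \lra B$; there is no map $P \lra A$, and the trivial cofibration $A \lra A'$ points away from $A$. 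To actually shorten the zigzag you must compose a projection of $P$ with the next leg $A \lra X$ of the zigzag, and that needs a genuine map $P \lra A$. As written, your operation replaces the segment $X \lla A \lra C \lla B$ by $X \lla A \lra A' \lla P \lra B$, which contains the same number of cospans, so iterating it never terminates in a single roof $F_i \lla W \lra F'_i$. The natural repair inside your framework (a mapping path object on the leg $A \lra C$, which does retract onto $A$) requires the apex $C$ to be a fibrant diagram, and fibrancy of the intermediate functors produced by Lemma~\ref{htpy2_lem} is not established.

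The ingredient you never use, and which the paper's proof leans on, is that $F_i$ and $F'_i$ are themselves fibrant and cofibrant objects of $\mcalm^{\utmi}$: their values lie in $\ca$, cofibrations in the model structure of Proposition~\ref{model_prop} are objectwise, and $F_i$ is the restriction of the fibrant diagram $\Theta(f)$ to a subcomplex, which leaves the matching conditions unchanged. Consequently the zigzag, being an isomorphism in $\ho(\mcalm^{\utmi})$, is represented by a single direct weak equivalence $w \colon F_i \stackrel{\sim}{\lra} F'_i$ (Hovey, Theorem 1.2.10; this is the same reduction of a zigzag to a direct morphism used in the proof of Lemma~\ref{esur_lem}). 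With that $w$ in hand, your final factorization applied to $(\mathrm{id},w)\colon F_i \lra F_i\times F'_i$ produces $\fib$ with all the required properties, so the fix is short, but as it stands the "shortening" claim is a genuine gap rather than a routine omission.
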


\begin{proof}
	Consider the map $\Theta_i \colon \underset{\text{sSet}}{\text{Hom}} (\mcalt^{M_i}_{\bullet}, \ahd) \lra \mcalf(\utmi; \ca)$ defined in the same way as $\Theta$, and let $f_i = f|\mcalt^{M_i}_{\bullet}$ and $f'_i = f'|\mcalt^{M_i}_{\bullet}$. Then it is clear that $f_i$ is homotopic to $f'_i$, $\Theta_i(f_i) = F_i$ and $\Theta_i(f'_i) = F'_i$. Applying Lemma~\ref{htpy2_lem}, we get a zigzag 
	\begin{eqnarray} \label{fi_zigzag}
	\xymatrix{F_i & \bullet \ar[l]_-{\sim} \ar[r]^-{\sim} & \cdots   & \bullet \ar[l]_-{\sim} \ar[r]^-{\sim} & F_i'}
	\end{eqnarray}
	Using now the fact that  $F_i$ and $F_i'$ are both fibrant and cofibrant, and some standard techniques from model categories, one can construct out of (\ref{fi_zigzag}) a short zigzag (of the form indicated in the lemma) which has the required properties.
\end{proof}

\begin{lem}
	Let $j \leq i$, and let $\alpha_i$ and $\alpha'_i$ be as in Lemma~\ref{fib_lem}. Then there exists a map $\fjb  \lla R_{ij}^* \fib$ making the following diagram commute.
	\begin{eqnarray} \label{fi_diag}
	\xymatrix{R^*_{ij}F_i \ar[d]_-{id} & & R^*_{ij} \fib \ar[ll]_-{\sim}^-{R^*_{ij}(\alpha_i)} \ar[rr]^-{\sim}_-{R^*_{ij}(\alpha'_i)} \ar[d]^-{\sim} &  & R^*_{ij} F'_i  \ar[d]^-{id}  \\
		F_j    &  & \fjb \ar[ll]_-{\sim}^-{\alpha_j} \ar[rr]^-{\sim}_-{\alpha'_j} & &  F'_j.} 
	\end{eqnarray}
\end{lem}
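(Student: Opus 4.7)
First I would establish the preparatory facts. By Proposition~\ref{model_prop}, the injective model structure on $\mcalm^{\utmj}$ has cofibrations and weak equivalences defined objectwise, so the restriction functor $R_{ij}^*$ preserves cofibrancy and weak equivalences; thus $R_{ij}^* \fib$ is cofibrant and $R_{ij}^* \alpha_i, R_{ij}^* \alpha'_i$ are weak equivalences. Moreover $F_j$ and $F'_j$ are fibrant in $\mcalm^{\utmj}$, because at each $U_{\sigma} \in \utmj$ the matching object is computed over faces of $\sigma$, all of which remain in the subcomplex $\mcalt^{M_j}$ by condition (b) preceding (\ref{ei_eqn}); hence the matching map conditions are unchanged.

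The key observation is that both $\alpha_j$ and $\alpha'_j$ are individually \emph{trivial} fibrations. They are weak equivalences by Lemma~\ref{fib_lem}. Since $F_j$ and $F'_j$ are fibrant, each projection $F_j \times F'_j \to F_j$ (respectively $F'_j$) is a fibration, and $\alpha_j$ (resp.\ $\alpha'_j$) factors as the fibration $(\alpha_j, \alpha'_j)$ composed with such a projection, hence is a fibration.

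The map $h$ will be constructed by a joint lifting argument in two stages. Stage one: since $R_{ij}^* \fib$ is cofibrant and $\alpha_j$ is a trivial fibration, lift $R_{ij}^* \alpha_i$ through $\alpha_j$ to obtain a preliminary $h_1 \colon R_{ij}^* \fib \to \fjb$ with $\alpha_j h_1 = R_{ij}^* \alpha_i$. In general $\alpha'_j h_1 \neq R_{ij}^* \alpha'_i$; to correct this, construct a left homotopy $H \colon R_{ij}^* \fib \times I \to F'_j$ from $\alpha'_j h_1$ to $R_{ij}^* \alpha'_i$ via a cylinder object in $\mcalm^{\utmj}$. Stage two: lift the commutative square whose top map is $h_1$, left map is the trivial cofibration $i_0 \colon R_{ij}^* \fib \hookrightarrow R_{ij}^* \fib \times I$, right map is the fibration $(\alpha_j, \alpha'_j)$, and bottom map is $(R_{ij}^* \alpha_i \circ \mathrm{pr}, H) \colon R_{ij}^* \fib \times I \to F_j \times F'_j$, producing $\widetilde{H} \colon R_{ij}^* \fib \times I \to \fjb$. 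Setting $h := \widetilde{H} \circ i_1$ then gives $\alpha_j h = R_{ij}^* \alpha_i$ and $\alpha'_j h = R_{ij}^* \alpha'_i$ by construction, and $h$ is a weak equivalence by the $2$-out-of-$3$ property.

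The main obstacle will be ensuring the existence of the homotopy $H$, since two arbitrary weak equivalences between cofibrant-fibrant objects are not automatically homotopic. The remedy is to exploit the flexibility in the choice of $h_1$: because $\alpha_j$ is a trivial fibration and $R_{ij}^* \fib$ is cofibrant, the space of such lifts is contractible. Concretely, one can choose $h_1 := \mathrm{pr}_{\fjb} \circ s$ where $s$ is a section of the trivial fibration $\fjb \times_{F_j} R_{ij}^* \fib \to R_{ij}^* \fib$ (obtained by pulling back $\alpha_j$); then $\alpha'_j h_1$ and $R_{ij}^* \alpha'_i$ are both weak equivalences representing the same homotopy class induced by the compatible zigzags used to construct $\fib$ and $\fjb$ in Lemma~\ref{fib_lem}, and therefore are left-homotopic in $\mcalm^{\utmj}$.
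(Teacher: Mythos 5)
Your architecture is essentially the paper's: make one of the two squares commute strictly by lifting against a trivial fibration (using that restriction preserves cofibrancy, so $R^*_{ij}\fib$ is cofibrant), and then repair the other square by a cylinder-plus-lifting argument against the fibration $(\alpha_j,\alpha'_j)$ — the paper packages this second step as Lemma~\ref{reg_lem}. Stages one and two of your argument are correct as written, and your observation that $\alpha_j$ and $\alpha'_j$ are themselves trivial fibrations (via fibrancy of $F_j$, $F'_j$) is also used, implicitly, in the paper.

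The gap is the existence of the homotopy $H$ from $\alpha'_j h_1$ to $R^*_{ij}(\alpha'_i)$; this is the only non-formal content of the lemma, and your justification does not establish it. First, the proposed remedy via "flexibility in the choice of $h_1$" cannot work: since $R^*_{ij}\fib$ is cofibrant and $\alpha_j$ is a trivial fibration, any two lifts of $R^*_{ij}(\alpha_i)$ through $\alpha_j$ are homotopic over $F_j$, so the homotopy class of $\alpha'_j h_1$ is independent of which lift (or section) you pick; no cleverness in choosing $h_1$ can create the homotopy if it is not already there. Second, the assertion that $\alpha'_j h_1$ and $R^*_{ij}(\alpha'_i)$ "represent the same homotopy class induced by the compatible zigzags" is not a consequence of the statement of Lemma~\ref{fib_lem}: that lemma produces, for each index $i$ separately, some span $(\alpha_i,\alpha'_i)$ with properties (a) and (b), with no coherence imposed between different indices — for instance, composing $\alpha'_i$ with a nontrivial self-equivalence of $F'_i$ would still satisfy Lemma~\ref{fib_lem} but would destroy exactly the compatibility you need. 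The required homotopy-commutativity has to be extracted from the actual construction: the zigzags (\ref{fi_zigzag}) for the indices $i$ and $j$ both arise by restricting the single functor $\overline{H}=\Theta_{\tm\times I}(H)$ built in the proof of Lemma~\ref{htpy2_lem}, so the zigzag over $\utmj$ is (up to the standard shortening) the restriction of the zigzag over $\utmi$; this is what makes the square (\ref{fi_sq1}) — equivalently your relation $\alpha'_j h_1 \simeq R^*_{ij}(\alpha'_i)$ — commute up to homotopy. This is precisely the input the paper invokes by pointing back to the proof of Lemma~\ref{htpy2_lem}, and without supplying it your argument does not close.
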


\begin{proof}
	Since $\alpha_j$ is a fibration by Lemma~\ref{fib_lem}, and since $F_j$ is cofibrant, by the lifting axiom, there exists a map $\alpha_j^{-1} \colon F_j \lra \fjb$ making the obvious square commute. From the construction of the zigzag (\ref{fi_zigzag}) (look closer at the proof of Lemma~\ref{htpy2_lem}), the following square commutes up to homotopy.
	\begin{eqnarray}   \label{fi_sq1}
	\xymatrix{R^*_{ij} F_i \ar[rrr]^-{R^*_{ij} (\alpha'_i)R^*_{ij}(\alpha_i^{-1})} \ar[d]_-{id}   &  &  &  R^*_{ij} F'_i \ar[d]^-{id} \\
		F_j \ar[rrr]_-{\alpha'_j \alpha_j^{-1}} &  &  & F'_j.}
	\end{eqnarray}
	Since $\alpha'_j$ is a fibration, and since $R^*_{ij} \fib$ is cofibrant (this is because $\fib$ is cofibrant by Lemma~\ref{fib_lem}, and cofibrations are objectwise), the lifting axiom guarantees the existence of  a map $R_{ij}^* \fib \stackrel{\phi}{\lra} \fjb $ that makes the righthand square of the following diagram commute. 
	\begin{eqnarray} \label{fi_sq2}
	\xymatrix{R^*_{ij}F_i \ar[d]_-{id} & & R^*_{ij} \fib \ar[ll]_-{\sim}^-{R^*_{ij}(\alpha_i)} \ar[rr]^-{\sim}_-{R^*_{ij}(\alpha'_i)} \ar[d]^-{\sim}_-{\phi} &  & R^*_{ij} F'_i  \ar[d]^-{id}  \\
		F_j    & & \fjb \ar[ll]_-{\sim}^-{\alpha_j} \ar[rr]^-{\sim}_-{\alpha'_j} & &  F'_j.} 
	\end{eqnarray} 
	Combining this with the fact that the square (\ref{fi_sq1}) commutes up to homotopy, we deduce that the lefthand square commutes up to homotopy as well. By Lemma~\ref{reg_lem} below,  one can then replace $\phi$ by a map $R_{ij}^* \fib  \stackrel{\phib}{\lra} \fjb$ that makes the whole diagram commute. 
\end{proof}

\begin{lem}  \label{reg_lem}
	Consider the following diagram in a model category $\mcalm$. 
	\[
	\xymatrix{A_0 \ar[d]_-{g_0}  &  B \ar[l]_-{f_0}  \ar[r]^-{f_1}  \ar[d]^-{g}  &  A_1 \ar[d]^-{g_1}  \\
		D_0  & C \ar[l]^-{f'_0}  \ar[r]_-{f'_1}  & D_1.}
	\]
	Assume that each square commutes up to homotopy. Also assume that $B$ is cofibrant. If the map $(f'_0, f'_1) \colon C \lra D_0 \times D_1$ is a fibration, then there exists $\overline{g}$ homotopic to $g$ that makes the whole diagram commute. 
\end{lem}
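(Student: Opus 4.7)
The plan is to replace $g$ by a homotopic map $\overline{g}$ using the lifting axiom applied to the fibration $(f'_0, f'_1) \colon C \lra D_0 \times D_1$. First I would choose homotopies $H_0 \colon f'_0 g \sim g_0 f_0$ and $H_1 \colon f'_1 g \sim g_1 f_1$, and use cofibrancy of $B$ to put both on a common good cylinder object $B \times I$ for $B$ whose structure maps $i_0, i_1 \colon B \lra B \times I$ are acyclic cofibrations. Concretely, factor the fold $\nabla \colon B \sqcup B \lra B$ as a cofibration $(i_0, i_1) \colon B \sqcup B \lra B \times I$ followed by an acyclic fibration $\sigma \colon B \times I \lra B$; each $i_k$ is then the composition of the summand inclusion $B \lra B \sqcup B$ (a cofibration because $B$ is cofibrant) with $(i_0, i_1)$, and $\sigma \circ i_k = \id_B$ forces $i_k$ to be a weak equivalence by two-out-of-three.

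Combining the two homotopies into $(H_0, H_1) \colon B \times I \lra D_0 \times D_1$, I obtain a homotopy from $(f'_0, f'_1) \circ g$ to $(g_0 f_0, g_1 f_1)$, fitting into the commutative square
\begin{equation*}
\xymatrix{B \ar[rr]^-{g} \ar[d]_-{i_0} & & C \ar[d]^-{(f'_0, f'_1)} \\ B \times I \ar[rr]_-{(H_0, H_1)} & & D_0 \times D_1}
\end{equation*}
in which the left vertical map is an acyclic cofibration and the right vertical map is a fibration by hypothesis. The lifting axiom then produces a diagonal filler $\widetilde{H} \colon B \times I \lra C$ making both triangles commute.

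Finally, setting $\overline{g} := \widetilde{H} \circ i_1$, the map $\widetilde{H}$ is itself a left homotopy from $g = \widetilde{H} \circ i_0$ to $\overline{g}$, while commutativity of the lower triangle yields $f'_k \circ \overline{g} = f'_k \circ \widetilde{H} \circ i_1 = H_k \circ i_1 = g_k \circ f_k$ for $k = 0, 1$. Thus $\overline{g}$ makes the whole diagram commute on the nose and is homotopic to $g$, as required. The main technical point is the construction of a good cylinder object $B \times I$ on which both homotopies are defined and for which $i_0$ is an acyclic cofibration; once that is arranged, the conclusion is an immediate application of the lifting axiom.
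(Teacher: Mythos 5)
Your proof is correct and is essentially the same argument as the paper's: combine the two homotopies into a single homotopy $B \times I \to D_0 \times D_1$, lift it against the fibration $(f'_0,f'_1)$ along the acyclic cofibration given by one end-inclusion of the cylinder (which uses cofibrancy of $B$), and take $\overline{g}$ to be the lift restricted to the other end. The only differences are cosmetic (you put $g$ at $i_0$ rather than $i_1$, and you spell out the construction of the good cylinder and the two-out-of-three argument that the paper delegates to a citation), and, like the paper, you pass without further comment over the point that the two given homotopies may be taken on a common cylinder object.
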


\begin{proof}
	Since each square commutes up to homotopy, there exists a homotopy $H \colon B \times I \lra D_0 \times D_1$ from $(g_0f_0, g_1f_1)$ to $(f'_0, f'_1) \circ g$, which fits into the following commutative diagram. (Note that here $B\times I$ denotes the cylinder object in the model category. 
	\[
	\xymatrix{    &  B \ar[r]^-{g}  \ar@{>->}[d]_-{i_1}^-{\sim}   &  C \ar@{->>}[d]^-{(f'_0, f'_1)}  \\
		B \  \ar@{>->}[r]_-{i_0}  & B \times I  \ar[r]_-{H} \ar[ru]^-{\psi}   &   D_0 \times D_1.}
	\]
	The canonical inclusion $i_1$ is an acyclic cofibration since $B$ is cofibrant \cite[Lemma 4.4]{dwyer_spa95}. The map $(f'_0, f'_1)$ is a fibration by assumption, and the map $\psi$ is provided by the lifting axiom. Now define $\overline{g} = \psi \circ i_0$. It is straightforward to check that $\overline{g}$ does the  work. 
\end{proof}

We still need an important lemma. From the definition of the right Kan extension (\ref{ei_eqn}), there is a canonical map $\eta_i \colon E_i F_i \lra E_{i-1}F_{i-1}$ induced by the universal property of  limit.  These maps fit into the covariant diagram $\ebb \colon \nbb \lra \mcalm^{\utm}$ defined by $\ebb(i) = E_iF_i$ and $\ebb(i \ra (i-1)) = \eta_i$. Here $\nbb$ is viewed as the poset $\{1 \la 2 \la 3 \la \cdots\}$. 

\begin{lem}  \label{kappa_lem}
	The canonical map $ \kappa \colon F \lra \textup{holim}_{\nbb} \ebb $ 
	is a weak equivalence. 
\end{lem}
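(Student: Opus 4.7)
The plan is to evaluate $\kappa$ objectwise and exploit the fact that for each $x \in \utm$, the tower $\{E_iF_i(x)\}_i$ is eventually constant at $F(x)$. Since homotopy limits over $\nbb$ in the simplicial model category $\mcalm^{\utm}$ are computed objectwise, there is a natural identification $\left(\underset{\nbb}{\text{holim}}\, \ebb\right)(x) \simeq \underset{\nbb}{\text{holim}}\, \{E_iF_i(x)\}_i$, so it will suffice to show that each $\kappa(x) \colon F(x) \lra \underset{\nbb}{\text{holim}}\,\{E_iF_i(x)\}_i$ is a weak equivalence.

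To analyze the tower, fix $x = U_{\sigma_x} \in \utm$. As observed in the proof of Lemma~\ref{facts_lem}, $\utmi \downarrow x = \utmi \cap \mcalu(\sigma_x)$, so
\[
E_iF_i(x) = \underset{y \in \utmi \cap \mcalu(\sigma_x)}{\lim} F(y).
\]
The crucial observation is that $\mcalu(\sigma_x)$ is finite (one object $U_{\lambda}$ for each face $\lambda$ of $\sigma_x$) and $\bigcup_i \utmi = \utm$, so there exists $N$ with $\utmi \cap \mcalu(\sigma_x) = \mcalu(\sigma_x)$ for all $i \geq N$. For such $i$, since $U_{\sigma_x}$ is the maximum of $\mcalu(\sigma_x)$ and $F$ is contravariant, the projection to the $x$-coordinate is an isomorphism $\underset{\mcalu(\sigma_x)}{\lim} F \stackrel{\cong}{\lra} F(x)$, whose inverse is precisely the unit map $\eta_i(x) \colon F(x) \lra E_iF_i(x)$; consequently the transition $E_iF_i(x) \lra E_{i-1}F_{i-1}(x)$ is the identity of $F(x)$ for all $i \geq N+1$.

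Thus the tower $\{E_iF_i(x)\}_i$ is eventually constant at $F(x)$ with identity transitions from stage $N$ onward, and $\kappa(x)$ is the canonical map from this stable value into the homotopy limit. To conclude I would replace the tower by one of fibrations between fibrant objects in which the stage-$i$ entry equals a fixed fibrant replacement $\widetilde{F(x)}$ with identity transitions for $i \geq N$; the strict limit of such a tower is isomorphic to $\widetilde{F(x)}$, computes the homotopy limit, and receives a weak equivalence from $F(x)$. The ``head'' of the tower for $i < N$ causes no obstruction because each head entry is forced by the stable value via finitely many composable maps. The only nontrivial technical point is this final claim about homotopy limits of eventually constant towers, which is a standard fact in simplicial model categories.
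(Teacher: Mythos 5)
Your argument is correct in outline, but it genuinely diverges from the paper's proof. You reduce to an objectwise statement: evaluation at each $x\in\utm$ turns $\ebb$ into a tower in $\mcalm$ that is eventually constant at $F(x)$ with identity transitions, and the homotopy limit of such a tower is the stable value. The paper instead argues globally in $\mcalm^{\utm}$: it first makes the same eventual-constancy observation (via the terminal object of $\utmr\downarrow x$) to identify $F$ with the strict limit $\lim_i E_iF_i$, and then shows that the tower $\ebb$ is a fibrant diagram in the sense of Proposition~\ref{model_prop}, so that the strict limit computes the homotopy limit; the bulk of the paper's work is the verification that each $\eta_i\colon E_iF_i\to E_{i-1}F_{i-1}$ is a fibration in $\mcalm^{\utm}$ by a case analysis of the relative matching maps $p_x$, which crucially uses condition (b) of the filtration (each $\mcalt^{M_{i+1}}$ adds exactly one new simplex $z$). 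Your route avoids that matching-map analysis entirely and does not need condition (b), only finiteness of each $\mcalt^{M_i}$ and $\bigcup_i\utmi=\utm$; the price is that you must justify your opening assertion that homotopy limits over $\nbb$ in $\mcalm^{\utm}$ are computed objectwise, which is not completely immediate because fibrations in $\mcalm^{\utm}$ are not defined objectwise. It does hold (a Reedy fibrant replacement of the tower in $(\mcalm^{\utm})^{\nbb}$ consists of fibrations in $\mcalm^{\utm}$, which are in particular objectwise fibrations between objectwise fibrant diagrams, so evaluation at $x$ preserves both the limit and its homotopy-invariance), and note this needs no simplicial structure on $\mcalm$ — the paper deliberately avoids that hypothesis here. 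With that remark added, and a one-line check that under your identifications $\kappa(x)$ corresponds to the weak equivalence $F(x)\to\widetilde{F(x)}$, your proof is complete; the paper's version buys the stronger, explicitly global statement that $\ebb$ is a fibrant tower with $F\cong\lim_i E_iF_i$ in $\mcalm^{\utm}$, while yours is shorter and more robust with respect to the choice of exhausting filtration.
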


\begin{proof}
	We begin by claiming that for every $x \in \utm$, there exists $s \in \nbb$ such that $F(x) \cong E_rF_r (x)$ for all $r \geq s$. 
	To see this, let $x \in \utm$. Since $\bigcup_i \utmi = \utm$, there exists $s \in \nbb$ such that $x \in \utms$. Let $r \geq s$.  Since the sequence $\{\utmi\}_i$ is increasing, the indexing category  $\utmr \downarrow x$ has a terminal object, namely $x \stackrel{id}{\lra} x$. This implies (by the definition (\ref{ei_eqn}) of $E_r$) that the canonical map $F(x) \lra  E_rF_r (x)$ is an isomorphism. Thanks to that isomorphism, we have $F \cong \underset{i}{\lim} \, E_i F_i$. To end the proof, it suffices to show that the diagram $\mathbb{E} \colon\nbb \lra \mcalm^{\utm}$ is fibrant. 
	By Proposition~\ref{model_prop}, we have  to show that the matching map of $\ebb$ at $i$ is a fibration for any $i$. This is the case for $i =1$ since $E_1F_1$ is fibrant (because of the fact that $F$ is fibrant). Let $i \geq 2$. Then the matching map at $i$ is nothing but the canonical map $\eta_i \colon E_i F_i \lra E_{i-1} F_{i-1}$. Looking at the definition of a fibration (see Proposition~\ref{model_prop}) in $\mcalm^{\utm}$, we need to show that the map $p_x$ from the following commutative diagram is a fibration for every  $x \in \utm$ to conclude that $\eta_i$ is a fibration. 
	\[
	\xymatrix{E_iF_i (x) \ar[rd]^-{p_x} \ar[rrd]^-{(\eta_i)_x} \ar[rdd]_-{\theta_{xi}}  &   &   \\
		&  \text{PB} \ar[d]^-{\lambda_x} \ar[r]_-{\beta_x}  &  E_{i-1}F_{i-1} (x) \ar[d]^-{\theta_{x(i-1)}}\\
		& \underset{y \ra x, y \neq x}{\lim} E_iF_i (y) \ar[r]^-{M_x(\eta_i)}  &  \underset{y \ra x, y \neq x}{\lim} E_{i-1}F_{i-1} (y)  }
	\]
	So let $x \in \utm$. Let $z \in \utm$ such that $\utmi = \utmii \cup \{z\}$. We need to deal with two cases depending on the fact that $x = z$ or $x \neq z$. 
	\begin{enumerate}
		\item[$\bullet$] If $x = z$, then $\theta_{x(i-1)}$ is an isomorphism. Since the pullback of an isomorphism is an isomorphism, it follows that $\lambda_x$ is an isomorphism. Since $\theta_{xi}$ is exactly the matching map of $E_iF_i$ at $z =x$, the map $\theta_{xi}$ is a fibration. Thus $p_x$ is a fibration. 
		\item[$\bullet$] Assume that $x \neq z$.  We have two cases. If there is no map from $z$ to $x$, then by the definitions, we have $E_{i-1}F_{i-1} (y) = E_iF_i (y)$ for all $y \ra x$. This implies that $M_x(\eta_i), \beta_x, (\eta_i)_x,$ and $p_x$ are isomorphisms. If there is a map $z \ra x$, one can  see that $\theta_{x(i-1)}$ and $\theta_{xi}$ are both isomorphisms since $x \notin \utmii$ and $x \notin \utmi$. This implies that $\lambda_x$ and $p_x$ are also isomorphisms. 
	\end{enumerate}
	We thus obtain the desired result. 
\end{proof}

We are now ready to prove the main result of this section: Proposition~\ref{theta_prop}. 

\begin{proof}[Proof of Proposition~\ref{theta_prop}]
	We need to show that $\Theta (f) = F$ and $\Theta (f') = F'$ are weakly equivalent in $\fuca$. By Proposition~\ref{fuca_prop}, it is enough to show that $\phi F \simeq \phi F'$ in $\mcalf_A(\utm; \mcalm)$ (see Definition~\ref{fuca_defn}). Here $\phi \colon \fuca \lra \mcalf_A(\utm; \mcalm)$ is the obvious functor defined by $\phi(G) = G$. From now on, we will regard $F$ and $F'$ as objects of $\mcalf_A(\utm; \mcalm)$. As before, we let $F_i$ (respectively $F'_i$) denote the restriction of $F$ (respectively $F'$) to $\utmi$. Taking the adjoint to (\ref{fi_diag}),  we get (\ref{fi2_diag}), which is of course a commutative diagram.  
	\begin{eqnarray} \label{fi2_diag}
	\xymatrix{F_i \ar[d]  & &  \fib \ar[ll]_-{\sim}^-{\alpha_i} \ar[rr]^-{\sim}_-{\alpha'_i} \ar[d]  & &  F'_i \ar[d] \\
		E_{ij} F_j   & & E_{ij} \fjb \ar[ll]_-{\sim}^-{E_{ij} (\alpha_j)} \ar[rr]^-{\sim}_-{E_{ij} (\alpha'_j)} & & E_{ij} F'_j.}
	\end{eqnarray}
	From Lemma~\ref{facts_lem}~-(ii), it is easy to see why the maps $E_{ij}(\alpha_j)$ and $E_{ij}(\alpha'_j)$ are both weak equivalences. Applying now the functor $E_i$ to (\ref{fi2_diag}), and using Lemma~\ref{facts_lem}~-(i), we get 
	\begin{eqnarray} \label{fi3_diag}
	\xymatrix{E_iF_i \ar[d]  &  E_i\fib \ar[l]_-{\sim}^-{\beta_i} \ar[r]^-{\sim}_-{\beta'_i} \ar[d] &  E_iF'_i \ar[d] \\
		E_j F_j    & E_j\fjb \ar[l]_-{\sim}^-{\beta_j} \ar[r]^-{\sim}_-{\beta'_j}  & E_j F'_j,}
	\end{eqnarray}
	where $\beta_i:= E_i(\alpha_i)$. This gives rise to  two weak equivalences: $\xymatrix{\ebb & \overline{\ebb} \ar[l]_-{\sim}^-{\beta}  \ar[r]^-{\sim}_-{\beta'} &  \ebb'}$, where  $\overline{\ebb} \colon \nbb \lra \mcalm^{\utm}$ is the obvious functor defined by $\overline{\ebb}(i) = E_i \fib$. Recalling the map $\kappa$ from Lemma~\ref{kappa_lem}, we have the  zigzag 
	\begin{eqnarray} \label{fi4_diag}
	\xymatrix{F \ar[r]^-{\sim}_-{\kappa} & \textup{holim}_{\nbb} \ \ebb   & & \textup{holim}_{\nbb} \ \overline{\ebb} \ar[ll]_-{\sim}^-{\text{holim}(\beta)}  \ar[rr]^-{\sim}_-{\text{holim}(\beta')} & & \textup{holim}_{\nbb} \ \ebb' &  F', \ar[l]_-{\sim}^-{\kappa'}}
	\end{eqnarray}
	which completes the proof \footnote{One may ask the question to know whether the objects that appear in diagrams (\ref{fi2_diag}), (\ref{fi3_diag}), and (\ref{fi4_diag}) belong to $\mcalf_A(\utm; \mcalm)$. The answer is yes. This is straightforward by using Lemma~\ref{facts_lem}~-(ii) and some classical properties of  homotopy limit.}. 
\end{proof}

We close this section with the following result.

\begin{prop} \label{fuca_iso_prop}
	The map $\lamb$ from Definition~\ref{lamb_defn} is a bijection. That is,  
	\[
	\xymatrix{\fuca/\textgoth{w} \ar[r]_-{\cong}^-{\lamb} & [\tmd, \ahd]}. 
	\]
	In fact the inverse of $\lamb$ is the map $\ttb$ from Definition~\ref{ttb_defn}. 
\end{prop}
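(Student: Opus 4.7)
The plan is to verify directly that the quotient maps $\lamb$ and $\ttb$ are inverse to each other by computing the two compositions $\Theta \circ \Lambda$ and $\Lambda \circ \Theta$ at the level of the sets $\fuca$ and $\htah$ (before passing to quotients). The key observation is that the specific fibrant replacement functor $\mcalrb$ from~(\ref{mcalrb_eqn}) mediates both composites in a very controlled way.

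First I would tackle $\ttb \circ \lamb = \id$. Unwinding the definitions, for any $F \in \fuca$ and any $\sigma = \langle v_{a_0},\dots,v_{a_n}\rangle$ one has
\[
\Theta(\Lambda(F))(U_\sigma) \;=\; \Lambda(F)_\sigma([n]) \;=\; \mcalrb(F)(U_\sigma),
\]
and the same holds on the generating morphisms $d^i$. Hence $\Theta(\Lambda(F)) = \mcalrb(F)$ as objects of $\fuca$. Since $\mcalrb$ is a fibrant replacement functor, the canonical natural transformation $F \stackrel{\sim}{\lra} \mcalrb(F)$ is a weak equivalence in $\fuca$ (and lives in $\fuca$ by Proposition~\ref{mcalrb_prop}~-(i)), so $[F] = [\mcalrb(F)]$ in $\fuca/we$. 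This gives $\ttb(\lamb([F])) = [F]$.

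Next I would prove $\lamb \circ \ttb = \id$, which will actually hold already at the level of $\htah$, i.e.\ $\Lambda(\Theta(f)) = f$. Set $F = \Theta(f)$, so $F(U_\sigma) = f_\sigma([n])$ for each non-degenerate $\sigma$ of dimension $n$. The crucial point is that, for every such $\sigma$, the restriction of $F$ to the sub-poset $\mcalu(\sigma) \subseteq \utm$ agrees, under the identification $\mcalu(\sigma) \cong \dtn$ from Remark~\ref{utm_pt_rmk}, with $f_\sigma$ itself. This follows from equation~(\ref{fdi_eqn}) applied iteratively together with the simplicial identities satisfied by $f$. Since $f_\sigma \in \ahn$ is a fibrant $\dtn$-diagram by Definition~\ref{ahn_defn}~-(b), each restriction $F|\mcalu(\sigma)$ is fibrant. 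By the inductive construction of $\mcalrb$ and Proposition~\ref{mcalr_prop}~-(iii), we therefore obtain $\mcalrb(F) = F$. Hence for non-degenerate $\sigma$,
\[
\Lambda(F)_\sigma(\{a_0,\dots,a_s\}) \;=\; \mcalrb(F)(U_{\langle v_{a_0},\dots,v_{a_s}\rangle}) \;=\; f_{\langle v_{a_0},\dots,v_{a_s}\rangle}([s]) \;=\; f_\sigma(\{a_0,\dots,a_s\}),
\]
where the last equality again uses~(\ref{fdi_eqn}); and the same computation works on morphisms. For degenerate $\sigma$, both $\Lambda(F)_\sigma$ and $f_\sigma$ are obtained from the non-degenerate case by applying the same degeneracy operator in $\ahd$, using that $f$ is a simplicial map. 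Thus $\Lambda(\Theta(f)) = f$ on the nose, and passing to homotopy classes yields $\lamb(\ttb([f])) = [f]$.

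The only mildly delicate step is the identification $\mcalrb(F) = F$ for $F = \Theta(f)$, because $\mcalrb$ is constructed by gluing the local functors $\mcalr$ on each simplex and one must check that no additional replacement is forced by the gluing; this is where Proposition~\ref{mcalr_prop}~-(iii) (``fibrant faces are preserved'') is essential. Once this is verified, the two identities above give a set-theoretic inverse to $\lamb$, so together with Propositions~\ref{lambda_prop} and~\ref{theta_prop} (which guarantee well-definedness on the quotients), we conclude that $\lamb$ is a bijection with inverse $\ttb$.
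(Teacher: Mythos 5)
Your proposal is correct and follows essentially the same route as the paper: the paper likewise observes that $\Theta\Lambda(F)$ is weakly equivalent to $F$ (your identification $\Theta\Lambda(F)=\mcalrb(F)$ just makes this explicit), and that $\Theta(f)$ is already fibrant because each $f_\sigma$ is, so $\mcalrb\Theta(f)=\Theta(f)$ and hence $\Lambda\Theta(f)=f$ on the nose. Your added care about the gluing step via Proposition~\ref{mcalr_prop}~-(iii) is a reasonable elaboration of what the paper leaves implicit, not a different argument.
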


\begin{proof}
	From the definitions, it is easy to see that $\Theta\Lambda (F) \simeq F$ for every $F \in \fuca$. So $\ttb \circ \lamb = id$. On the other  hand, let $f \colon \tmd \lra \ahd$ be a simplicial map.  By construction $\Theta(f)$ is fibrant since $\fs \colon \dtn \lra \ca$ is fibrant for any $n \geq 0, \sigma \in \tm_n$. Therefore $\mcalrb \Theta (f) = \Theta (f)$, where $\mcalrb$ is the fibrant replacement funtor from (\ref{mcalrb_eqn}). This implies that  $\Lambda \Theta (f) = f$, and therefore $\lamb \circ \ttb = id$, which completes the proof.  
\end{proof}

\section{Proof of the main result}   \label{pmr_section}

We now have all ingredients to prove Theorem~\ref{main_thm}, which is the main result of this paper.  Roughly speaking it classifies a class of functors called \textit{homogeneous} that we now define. 

Some preliminaries definitions are needed. Let $F \colon \om \lra \mcalm$ be a functor. 
 \begin{itemize}
 	\item The functor $F$ is called {\it good} if it is an isotopy functor (see Definition~\ref{defn:we_categories}) and for any string $U_0 \to U_1 \to \cdots$ of morphisms of $\om$, the natural map $F(\cup_{i=0}^\infty U_i) \to \underset{i}{\textup{holim}} \, F(U_i)$ is a weak equivalence.
 	
 	\item The functor $F$ is {\it polynomial of degree $\leq k$} if for every $U \in \om$ and pairwise disjoint closed subsets $A_0, \cdots, A_k$ of $U$, the canonical map 
 	$
 	F(U) \lra \underset{S \neq \emptyset}{\text{holim}} \; F(U \backslash \cup_{i \in S} A_i)
 	$
 	is a weak equivalence. Here $S \neq \emptyset$ runs over the power set of $\{0, \cdots, k\}$.
 	
 	\item The {\it $k$th polynomial approximation} to $F$, denoted $T_kF$, is the functor $T_k F \colon \om \lra \mcalm$ defined as 
 	$
 	T_k F (U) =  \underset{V \in \ok (U)}{\text{holim}} F(V).
 	$
 	On morphisms, $T_kF$ is defined in the obvious way.
 \end{itemize} 

 \begin{defn} \label{defn:homogeneous_functor}
 	Assume that the simplicial model category $\mcalm$ has a terminal object denoted $0$. A functor $F \colon \om \lra \mcalm$ is said to be \emph{homogeneous of degree} $k$ if it satisfies the following three conditions:
 	\begin{enumerate}
 		\item $F$ is a good functor;
 		\item $F$ is polynomial of degree $\leq k$; 
 		\item The unique map $T_{k-1} F (U) \lra 0$ is a weak equivalence for every $U \in \om$. 
 	\end{enumerate}   
 \end{defn}

\begin{defn} \label{defn:fkom}	 
		 Define $\mcalf_{kA}(\om; \mcalm)$ as the category of homogeneous functors $F \colon \om \lra \mcalm$ of degree $k$  such that  $F(U) \simeq A$ for every $U$ diffeomorphic to the disjoint union of exactly $k$ open balls. 
\end{defn}

\begin{lem} \label{fkfu_lem}
	Let $\mcalm$ be a simplicial model category that has a zero object, and let $\fkam$ be as in Definition~\ref{defn:fkom}. Let $\mcalt^{F_k(M)}$ be a triangulation of $F_k(M)$, and let $\mcalf_A(\utfm; \mcalm)$ be the category from Definition~\ref{fum_defn}.  Then the categories $\fkam$ and $\mcalf_A(\utfm; \mcalm)$ are weakly equivalent. 
\end{lem}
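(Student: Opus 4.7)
The plan is to concatenate three weak equivalences that have already been established in the excerpt, each applied with $F_k(M)$ playing the role of $M$. I would first invoke (\ref{res_eqn1})---which is \cite[Theorem 1.3]{paul_don17-2}---to obtain
\[
\fkam \wequ \mcalf_{1A}(\mcalo(F_k(M)); \mcalm).
\]
This is the one step at which the hypotheses that $\mcalm$ is simplicial and that $\mcalm$ has a zero object are actually consumed; both are required (for $k \geq 2$) by the cited theorem.

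Next, I would apply (\ref{res_eqn2})---\cite[Lemma 6.5]{paul_don17-2}---to the manifold $F_k(M)$ in place of $M$, yielding
\[
\mcalf_{1A}(\mcalo(F_k(M)); \mcalm) \wequ \mcalf_A(\mcalb_{\utfm}; \mcalm),
\]
where $\mcalb_{\utfm}$ denotes the analog of $\butm$ (cf.\ Definition~\ref{butm_defn}) built from the triangulation $\mcalt^{F_k(M)}$ of $F_k(M)$: it is the collection of subsets of $F_k(M)$ diffeomorphic to an open ball that are contained in some $U_\sigma \in \utfm$. Finally, Proposition~\ref{fum_butm_prop} applied to the manifold $F_k(M)$ would give
\[
\mcalf_A(\mcalb_{\utfm}; \mcalm) \wequ \mcalf_A(\utfm; \mcalm).
\]

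Composing these three weak equivalences via the transitivity of the relation $\wequ$ (immediate from \cite[Definition 6.3]{paul_don17-2}, since a zigzag of weak equivalences concatenates with another zigzag) would yield the claim.

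I do not anticipate any real obstacle here: the content of the lemma is simply that (\ref{res_eqn1}), (\ref{res_eqn2}), and Proposition~\ref{fum_butm_prop} are each formulated for an arbitrary smooth manifold, so substituting $F_k(M)$ for $M$ is automatic as soon as one notes that $F_k(M)$ is itself a smooth manifold (and second-countable whenever $M$ is). The lemma is really just the bookkeeping step that assembles three prior results into the form needed to derive Theorem~\ref{main_thm}~-(ii) from part~(i), so the only thing to verify carefully is that the zero-object assumption is invoked in exactly the right place, namely at the first step.
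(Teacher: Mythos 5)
Your proposal is correct and follows essentially the same route as the paper: the paper's proof also chains $\fkam \simeq \mcalf_{1A}(\mcalo(F_k(M)); \mcalm) \simeq \mcalf_A(\mcalb_{\utfm}; \mcalm) \simeq \mcalf_A(\utfm; \mcalm)$, citing the approach of \cite[Theorem 1.3 \& Lemma 6.5]{paul_don17-2} for the first two steps and Proposition~\ref{fum_butm_prop} (applied to $F_k(M)$) for the third. Your remark about where the zero-object and simplicial hypotheses are consumed is consistent with the paper's usage.
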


\begin{proof}
	By using the same approach as that we used to prove \cite[Theorem 1.3 \& Lemma 6.5]{paul_don17-2}, one has 
	\begin{eqnarray} \label{we_eq0}
	\fkam \simeq \foam,
	\end{eqnarray}
	and
	\begin{eqnarray} \label{we_eq1}
	\foam \simeq \mcalf_A (\mcalb_{\utfm}; \mcalm),
	\end{eqnarray}
	where $\mcalf_A (\mcalb_{\utfm}; \mcalm)$ is the category from Definition~\ref{fum_defn}. Furthermore, by Proposition~\ref{fum_butm_prop}, we have
	\begin{eqnarray} \label{we_eq2}
	\mcalf_A (\mcalb_{\utfm}; \mcalm) \simeq \mcalf_A (\utfm; \mcalm). 
	\end{eqnarray}
	Combining (\ref{we_eq0}), (\ref{we_eq1}), and (\ref{we_eq2}), we get the desired result.  
\end{proof}

We are now ready to prove  Theorem~\ref{main_thm}.

\begin{proof}[Proof of Theorem~\ref{main_thm}]
	For the first part, we refer the reader to the introduction. The second part is proved as follows. Let $\mcalt^{F_k(M)}$ as above. From Lemma~\ref{fkfu_lem} and \cite[Remark 6.4]{paul_don17-2}, we deduce that the localizations of $\fkam$ and $\mcalf_A (\utfm; \mcalm)$  are equivalent in the classical sense. Furthermore,  by Proposition~\ref{fuca_prop}, we have that the localization of the latter category is equivalent to the localization of $\mcalf (\utfm; \ca)$. This implies that weak equivalence classes of $\fkam$ are in one-to-one correspondence with weak equivalence classes of $\mcalf (\utfm; \ca)$. That is,
	\begin{eqnarray} \label{pthm_eqn}
	\fkam \slash \textgoth{w} \ \cong \ \mcalf (\utfm; \ca) \slash \textgoth{w}. 
	\end{eqnarray} 
	Applying Proposition~\ref{fuca_iso_prop}, we get 
	\begin{eqnarray} \label{pthm_eqn2}
	\mcalf (\utfm; \ca) \slash \textgoth{w}  \   \cong  \  \left[\mcalt^{F_k(M)}_{\bullet}, \ahd\right]. 
	\end{eqnarray}
	Define $\ah$ to be the geometric realization of $\ahd$. That is, $\ah:= |\ahd|$. Since $|\mcalt^{F_k(M)}_{\bullet}| \cong F_k(M)$, it follows that 
	$
	\left[\mcalt^{F_k(M)}_{\bullet}, \ahd\right] \cong \left[F_k(M), \ah\right]. 
	$ This proves the theorem. 
\end{proof}

\section{How our classification is related to that of Weiss}  \label{comparison_section}

In this section we briefly  recall the Weiss classification of homogeneous functors, and we explain a connection to our classification result (Theorem~\ref{main_thm}). As usual, we let \text{Top} (respectively $\text{Top}_*$) denote the category of spaces (respectively pointed spaces). 


We begin with Weiss' result about the classification of homogeneous functors. Let $p \colon Z \lra F_k(M)$ be a fibration. Define $F \colon \om \lra \text{Top}$ as $F(U) = \Gamma(p; F_k(U))$, the space of sections of $p$ over $F_k(U)$. It turns out that $F$ is polynomial of degree $\leq k$ (see \cite[Example 7.1]{wei99}). Define another functor $G \colon \om \lra \text{Top}$ as follows. Let $M^k \slash \Sigma_k$ denote the orbit space of the action of the symmetric group $\Sigma_k$ on the $k$-fold product $M^k$. Let $\Delta_k M$ be the complement of $F_k(M)$ in $M^k \slash \Sigma_k$. (The space $\Delta_k M$ is the so-called \textit{fat diagonal} of $M$.) Define
$
G(U) := \underset{N \in \mcaln}{\text{hocolim }} \Gamma(p; F_k(U) \cap N),
$
where  $\mcaln$ stands for the poset of neighborhoods of $\Delta_k M$. The space $G(M)$ should be thought of as the space of sections near the fat diagonal of $M$. 
Let $\eta \colon F \lra G$ be the canonical map induced by the inclusions $F_k(U) \cap N \subseteq F_k(U)$. It turns out that $\eta$ is  nothing but the canonical map $T_k F \lra T_{k-1}F$ since $T_{k-1}F$ is equivalent to $G$  (see \cite[Propositions 7.5 and 7.6 ]{wei99}). Selecting a point $s$ in $G(M)$, if one exists, we define $E_{p,s} \colon \om \lra \textup{Top}$  as the homotopy fiber of $\eta$ over $s$, that is, $E_{p, s} := \textup{hofiber} (\eta)$. It follows from \cite[Example 8.2]{wei99} that $E_{p, s}$ is homogeneous of degree $k$.
Weiss' classification says that every homogeneous functor can be constructed in this way. Specifically, we have the following result. 

\begin{thm} \cite[Theorem 8.5]{wei99}  \label{thm:wchf}
	Let $E \colon \om \lra \textup{Top}$ be homogeneous of degree $k$. Then there is a (levelwise) homotopy equivalence $E \lra E_{p, s}$ for a fibration $p \colon Z \to F_k(M)$ with a section $s$ near the fat diagonal of $M$. 
\end{thm}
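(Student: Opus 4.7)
The plan is to recover the classifying datum $(p,s)$ directly from $E$ and then exhibit a natural equivalence $E \simeq E_{p,s}$. The central input is \cite[Lemma 6.5]{paul_don17-2}: a homogeneous functor of degree $k$ is determined up to equivalence by its restriction to the basis of open subsets of $M$ diffeomorphic to $\coprod^k \mathbb{R}^n$, so it suffices to arrange that the fiber of $p$ over a configuration $S = \{x_1,\ldots,x_k\}$ agrees up to equivalence with $E(U)$ for $U$ a tubular neighborhood of $S$ of this shape. Concretely, for $S \in F_k(M)$ let $\mcaln_S$ denote the cofinal poset of open neighborhoods $U$ of $S$ such that $U$ is a disjoint union of $k$ small open balls, one around each point of $S$, and set
\[
Z_S := \underset{U \in \mcaln_S}{\text{holim}}\, E(U).
\]
Since $\mcaln_S$ is filtered and $E$ sends every morphism in it to a weak equivalence by isotopy invariance, $Z_S \simeq E(U)$ for any single $U \in \mcaln_S$. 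Varying $S$ continuously assembles the fibers into a total space $Z$ with a natural projection $p \colon Z \to F_k(M)$, which I would then replace by a Serre fibration preserving the weak homotopy type of the fibers.

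Second, I would construct the section $s$ near $\Delta_k M$. When $S$ approaches the fat diagonal, some pairs of points collapse and any neighborhood of the limit configuration contains sets with strictly fewer than $k$ components. Homogeneity of degree $k$ gives $T_{k-1}E \simeq *$, which forces $E(U) \simeq *$ for every $U$ with fewer than $k$ components. This contractibility produces a preferred point in $Z_S$ for $S$ in an open neighborhood $N$ of $\Delta_k M$, and standard obstruction theory (leveraging contractibility of the fibers over $N$) upgrades the pointwise choice into an honest section $s$ of $p|_N$.

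Third, I would verify $E \simeq E_{p,s}$. There is a natural transformation $E(U) \lra E_{p,s}(U) = \text{hofiber}_s\bigl(\Gamma(p; F_k(U)) \to G(U)\bigr)$ that sends $x \in E(U)$ to the section of $p$ over $F_k(U)$ selecting the image of $x$ in each fiber $Z_S$, together with the prescribed trivialization near $\Delta_k M$. By \cite[Lemma 6.5]{paul_don17-2} it is enough to check this is a weak equivalence on $U$ diffeomorphic to a disjoint union of at most $k$ balls, where both sides compute $E(U)$ directly from the construction of $Z_S$ and the contractibility argument above.

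The hard part will be the second step. Producing a preferred point in $Z_S$ pointwise is immediate from contractibility of $E(U)$ on fewer-than-$k$-component neighborhoods, but coherently assembling these choices into a continuous section on a germ of the entire fat diagonal requires the strong form of the vanishing $T_{k-1}E \simeq *$, not merely pointwise but as a functor on the subposet of $\mcalo(M)$ consisting of thin neighborhoods of $\Delta_k M$. Weiss's argument handles this by a stratified induction over the incidence poset of $\Delta_k M$, at each stage promoting a contractible space of local germs into a chosen section via compatible lifting.
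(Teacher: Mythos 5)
First, a point of reference: the paper does not prove this statement at all; it is Weiss's Theorem 8.5, recalled verbatim from \cite{wei99}, so your proposal is effectively an attempt to reprove Weiss's classification. Your overall architecture (fibers of $p$ built from values of $E$ on tubular neighborhoods of configurations, a section-germ near $\Delta_k M$ coming from the degree-$(k-1)$ vanishing, and a comparison map checked on disjoint unions of at most $k$ balls) is indeed the right one and is essentially Weiss's. But two of your key assertions are incorrect as stated, and they sit exactly where the real work is. The claim that $T_{k-1}E \simeq *$ forces $E(U) \simeq *$ for \emph{every} $U$ with fewer than $k$ components is false: the canonical map $E(U) \to T_{k-1}E(U)$ is an equivalence only when $U$ is diffeomorphic to a disjoint union of at most $k-1$ open balls, and for a general $U$ with few components (e.g.\ $U = M$ connected, $k \geq 2$) $E(U)$ is typically not contractible --- otherwise every homogeneous functor would be globally trivial. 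What you may use is only the special case $E(W) \simeq *$ for $W$ a disjoint union of at most $k-1$ balls, such as the ``cluster'' neighborhoods of a configuration close to $\Delta_k M$. Second, the obstruction-theoretic step ``leveraging contractibility of the fibers over $N$'' cannot work: the fibers of $p$ over configurations near the fat diagonal are equivalent to $E$ of $k$ small disjoint balls, i.e.\ to the generic fiber $A$, and are not contractible. The contractible object is $E(W)$ for the coarser cluster neighborhood $W$, and the preferred point of $Z_S$ is the image of the essentially unique point of $E(W)$ under $E(W) \to E(U) \to Z_S$. Promoting these pointwise choices to a continuous, coherent germ of a section over $F_k(M)$ near $\Delta_k M$ is precisely the hard coherence problem (you acknowledge this), and fiber contractibility is simply not available as the tool to solve it.

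There is also a structural gap in the first step: ``varying $S$ continuously assembles the fibers into a total space $Z$'' is not a construction. Topologizing $Z$ so that $p$ is (or can be replaced by) a fibration whose fibers really are the $Z_S$, and --- crucially for your third step --- so that an element $x \in E(U)$ determines a \emph{continuous} section of $p$ over $F_k(U)$, is a substantive part of Weiss's argument rather than a formality; without it neither the natural transformation $E \to E_{p,s}$ nor the section $s$ exists as a map of spaces. (The small points are fine: the holim over $\mcaln_S$ is harmless since the poset has a countable cofinal chain and all maps are equivalences; and the final reduction is legitimate provided you also record that $E_{p,s}$ is homogeneous of degree $k$, which is \cite[Example 8.2]{wei99}, so that an objectwise equivalence on disjoint unions of at most $k$ balls suffices, in the spirit of \cite[Lemma 6.5]{paul_don17-2}.) As it stands, then, the proposal reproduces the correct strategy but the two contractibility claims must be corrected and the assembly/coherence steps supplied before it constitutes a proof.
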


Weiss also describes the fiber of $p$ in terms of $E$. 

\begin{prop}  \cite[Proposition 8.4 and Theorem 8.5]{wei99} \label{prop:fiber}
	Let $E$ be as in Theorem~\ref{thm:wchf}, and suppose $E$ is classified by a fibration $p \colon Z \to F_k(M)$. Then the fiber $p^{-1}(S)$ over $S \in F_k(M)$ is homotpy equivalent to $E(U_S)$, where $U_S$ is a tubular neighborhood of $S$, so that $U_S$ is diffeomorphic to the dijoint union of $k$ open balls. 
\end{prop}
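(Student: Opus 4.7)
The plan is to combine Theorem~\ref{thm:wchf} with a direct analysis of $E_{p,s}(U_S)$ when $U_S$ is a small tubular neighborhood of $S$. By Theorem~\ref{thm:wchf}, there is a levelwise equivalence $E \simeq E_{p,s}$, so $E(U_S) \simeq \textup{hofiber}_s\bigl(\eta(U_S) \colon \Gamma(p; F_k(U_S)) \to G(U_S)\bigr)$, and it suffices to identify this homotopy fiber with $p^{-1}(S)$.

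Writing $S = \{s_1, \ldots, s_k\}$ and $U_S = B_1 \sqcup \cdots \sqcup B_k$ with $s_i \in B_i$, I would first decompose $F_k(U_S)$ as a disjoint union $C_S \sqcup A_S$, where $C_S \cong B_1 \times \cdots \times B_k$ is the contractible component of configurations placing exactly one point in each $B_i$ (and containing $S$), and $A_S$ is the union of all other components, in which some $B_i$ contains at least two points. After shrinking $U_S$ if necessary, the component $C_S$ stays bounded away from the fat diagonal $\Delta_k M$, while $\overline{A}_S$ meets $\Delta_k M$. Consequently, for cofinally small $N \in \mcaln$ one has $F_k(U_S) \cap N \subseteq A_S$, so $\eta(U_S)$ factors through the projection $\Gamma(p; C_S) \times \Gamma(p; A_S) \to \Gamma(p; A_S)$ followed by the germ-restriction $r \colon \Gamma(p; A_S) \to G(U_S)$ near the fat diagonal.

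The key calculational step is then to show that $r$ is a weak equivalence, using Weiss' observation that $G$ is polynomial of degree $\leq k-1$ (\cite[Proposition 7.5]{wei99}) together with the fact that each component of $A_S$ corresponds to a multiplicity partition $(n_1, \ldots, n_k)$ of $k$ with some $n_i \geq 2$ and hence concentrates in strictly fewer than $k$ distinct ``ball clusters''; such components are fully detected by the germ near $\Delta_k M$. Granting this, the homotopy fiber of $\eta(U_S)$ over $s$ reduces to $\Gamma(p; C_S)$, and since $C_S$ is contractible containing $S$ and $p$ is a fibration, evaluation at $S$ gives $\Gamma(p; C_S) \simeq p^{-1}(S)$, as desired.

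The main obstacle I expect is the equivalence $r \colon \Gamma(p; A_S) \xrightarrow{\sim} G(U_S)$, which is the technical core of Weiss' argument in \cite[\S 8]{wei99}. Justifying it requires a careful cofinality argument showing that the shrinking neighborhoods $F_k(U_S) \cap N$ faithfully record the component structure of $A_S$, interlocked with the polynomial degree bound on $G$. Once this is secured, the remainder of the proof is a routine homotopy-fiber-of-a-projection computation, and the contractibility of $C_S$ together with the fibration property of $p$ delivers the final identification $E(U_S) \simeq p^{-1}(S)$.
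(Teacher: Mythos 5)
First, a point of reference: the paper offers no proof of this proposition at all — it is imported verbatim from Weiss (\cite[Proposition 8.4 and Theorem 8.5]{wei99}), so the only thing to compare your sketch against is Weiss's own \S 8. Your skeleton is essentially that argument and is sound as a reduction: use the levelwise equivalence $E \simeq E_{p,s}$; split $F_k(U_S)$ into the preferred component $C_S \cong B_1 \times \cdots \times B_k$ containing $S$ and the remaining components $A_S$; note that once the $B_i$ have disjoint compact closures, $\overline{C}_S$ is disjoint from $\Delta_k M$, so cofinally $F_k(U_S) \cap N \subseteq A_S$ and $\eta(U_S)$ factors through the projection onto $\Gamma(p; A_S)$; finally identify $\Gamma(p; C_S) \simeq p^{-1}(S)$ by evaluation over the contractible base. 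All of these steps are correct.

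The genuine gap is exactly the step you flag, namely that $r \colon \Gamma(p; A_S) \to G(U_S)$ is a weak equivalence, and the mechanism you propose for it is off target: the bound $\deg G \leq k-1$ is used by Weiss to identify $T_{k-1}$ of the section functor with $G$, which you do not need since you work with $E_{p,s}$ directly, and it says nothing about whether germ restriction on the clustered components is an equivalence. What is actually needed is a configuration-space fact: for each component $D \cong \prod_i F_{n_i}(B_i)$ of $A_S$ (some $n_i \geq 2$), the inclusion $D \cap N \hookrightarrow D$ is a homotopy equivalence for a cofinal family of $N$. This is true and can be proved directly. Since $\overline{U}_S$ is compact, the metric neighborhoods $N_\epsilon$ (two points within $\epsilon$) are cofinal in $\mcaln$ after intersecting with $F_k(U_S)$; for a single (round) ball $B$ and $n \geq 2$, radial scaling toward the centre shows that the inclusions $\{c \in F_n(B) : \text{two points of } c \text{ within } \epsilon\} \subseteq \{c : \text{two points within } \epsilon'\}$, $\epsilon < \epsilon'$, are homotopy equivalences, and since these subspaces exhaust $F_n(B)$ each is equivalent to $F_n(B)$; a product and Mayer--Vietoris argument over which ball carries the near-collision then gives $D \cap N_\epsilon \simeq D$, and homotopy invariance of section spaces of a fibration upgrades this to the equivalence of $\Gamma(p; D)$ with the germ sections, hence to $r$ being an equivalence. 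With that inserted, your homotopy-fiber computation closes the proof; without it, what you have is a correct reduction that still leans on citing Weiss's \S 8 for its core, which is no more than the paper itself does.
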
	

Combining Theorem~\ref{thm:wchf} and Proposition~\ref{prop:fiber}, we get that the classification of the objects of $\mcalf_{kA}(\om; \text{Top})$ (see Definition~\ref{defn:fkom})  amounts to the classification of fibrations over $F_k(M)$ with a section $s$ near the fat diagonal, and whose fiber is homotopy equivalent to $A$. Note that in the case $k=1$ the fat diagonal is empty, and we just look at fibrations over $M$ with fiber $A$.

We now explain a connection to our classification result. 
As usual,  $\mcalm$ is an arbitrary simplicial model category, and $A$ is an object of $\mcalm$. Assume that $A$ is both fibrant and cofibrant. Let $\haut A$ be the simplicial monoid of self weak equivalences $A \stackrel{\sim}{\to} A$. Define B$\haut A:= \overline{W}\haut A$, where $\overline{W}$ is the functor from \cite[p. 87]{may92} or  \cite[p. 269]{goe_jar09}. (Note that $\overline{W}$ lands in the category of simplicial sets.) We still denote by B$\haut A$ the geometric realization of B$\haut A$. So depending on the context, $\text{Bhaut}A$ should be interpreted as either a simplicial set or a topological space. We have the following two conjectures.  
\begin{conj} \label{conj:ah_BhautA}
	Let $\mcalm$ be a simplicial model category, and let $A \in \mcalm$ be an object which is both fibrant and cofibrant. Let $\ahd$ be the simplicial set from Section~\ref{ahd_subsection} constructed out of $A$, and let $\ah$ be its geometric realization. Then $\ah$ is weakly equivalent to $\emph{Bhaut} A$. That is, $\ah \simeq \emph{Bhaut} A$. 
\end{conj}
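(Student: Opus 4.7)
The plan is to exhibit $\ah$ as a model for the Dwyer--Kan classifying space of the category of weak equivalences between objects weakly equivalent to $A$, and then invoke the classical theorem that such a classifying space, restricted to the path component containing $A$, has the homotopy type of $B\haut A$.

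Concretely, I would first construct a natural map $\Phi \colon N\mcalw_A \lra \ahd$ of simplicial sets, where $\mcalw_A \subseteq \ca$ is the subcategory having the same objects as $\ca$ but only weak equivalences as morphisms. Given an $n$-simplex of $N\mcalw_A$, namely a chain $X_0 \stackrel{\sim}{\lra} X_1 \stackrel{\sim}{\lra} \cdots \stackrel{\sim}{\lra} X_n$, define a contravariant diagram $\sigma \colon \dtn \lra \ca$ by $\sigma(\{a_0, \cdots, a_s\}) := X_{a_0}$, with structure morphisms given by the appropriate compositions of the $X_i \lra X_{i-1}$, and set $\Phi(X_\bullet) := \mcalr(\sigma)$, where $\mcalr$ is the fibrant replacement functor from Section~\ref{mcalr_subsection}. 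That every morphism of $\sigma$ is a weak equivalence, together with Proposition~\ref{ca_prop}, places $\mcalr(\sigma)$ in $\ahn$; compatibility with faces and degeneracies follows from Proposition~\ref{mcalr_prop} (possibly after a slightly more careful choice of $\mcalr$ so that its values on degenerate diagrams match the degeneracies defined in Section~\ref{sj_subsubsection}).

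The heart of the conjecture is to show that $|\Phi|$ is a weak equivalence. My proposal is to argue inductively on the simplicial dimension: given compatible boundary data, an $n$-simplex of $\ahd$ filling it is determined, up to a contractible space of choices, by its behavior at any single vertex, thanks to Proposition~\ref{we_prop} together with the fibrant replacement argument used to prove Proposition~\ref{ahd_prop}. The analogous statement for $N\mcalw_A$ is immediate from the nerve construction, so the induction should close by comparing the two extension problems level by level.

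The main obstacle will be making ``up to a contractible space of choices'' precise, which amounts to identifying the homotopy fiber of $\Phi$ over a given simplex. Realistically this requires passing through a more flexible model, such as the complete Segal space associated with $(\ca, \mcalw_A)$ (identifying $\ahd$ with its underlying Kan complex), or a direct comparison with the Dwyer--Kan hammock localization $L^H \ca$ together with their computation $L^H\ca(A, A) \simeq \haut A$. Once $|\ahd| \simeq |N\mcalw_A|$ is established, the final step -- identifying the component of $[A]$ in $|N\mcalw_A|$ with $B\haut A$ -- is the classical Dwyer--Kan classification theorem for function complexes.
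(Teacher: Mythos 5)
The statement you are proving is not proved in the paper at all: it is stated as Conjecture~\ref{conj:ah_BhautA}, and the authors say explicitly that they do not know how to establish it (their Section~\ref{comparison_section} even sketches a related attempt via the Dwyer--Kan special classification complex $\ca'$ and identifies exactly where it breaks down, namely two comparison maps they cannot show are bijections). So the relevant question is whether your outline closes the gap the authors could not, and it does not. First, your map $\Phi \colon N\mcalw_A \to \ahd$ is not visibly a simplicial map: faces are fine by Proposition~\ref{mcalr_prop}(ii), but the degeneracies of $\ahd$ are \emph{not} $\sigma \mapsto \mcalr(\sigma s^j)$ -- the paper's Section~\ref{sj_subsubsection} builds them through an elaborate induction (menorahs, the objects $R\,\textup{colim}_{\Uppsi}$, etc.) precisely because the naive formula violates $s_is_j = s_{j+1}s_i$. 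Your parenthetical ``possibly after a slightly more careful choice of $\mcalr$'' is exactly the difficulty, not a remark one can defer; without it you have at best a map defined on non-degenerate simplices, and it is not clear it extends.

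Second, and more seriously, the heart of your argument -- that $|\Phi|$ is a weak equivalence because an $n$-simplex of $\ahd$ with prescribed boundary is ``determined up to a contractible space of choices by a single vertex'' -- is an assertion equivalent in difficulty to the conjecture itself. Proposition~\ref{we_prop} and the Kan-complex argument of Proposition~\ref{ahd_prop} give existence of fillers, not contractibility of the space of fillers, and upgrading existence to a homotopy-fiber statement is precisely the rectification/comparison problem (strict diagrams in the specific small category $\ca$ versus homotopy-coherent data) that the authors could not resolve; invoking complete Segal spaces or the hammock localization names plausible machinery but does not verify that it applies to $\ca$, which is not a model category, is only a hand-built small full subcategory, and whose simplices in $\ahd$ are strict fibrant $\dtn$-diagrams rather than nerve data. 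Finally, even granting $|\ahd| \simeq |N\mcalw_A|$, identifying $|N\mcalw_A|$ with $B\haut A$ needs the inclusion of your small $\mcalw_A$ into the Dwyer--Kan classification complex of $A$ (the category $\ca'$ of \emph{all} objects weakly equivalent to $A$) to be a weak equivalence on nerves; Dwyer--Kan's theorem is stated for the latter, and cofinality of the former is an additional claim you neither state nor prove. In short, the proposal is a reasonable strategy sketch in the same spirit as the authors' own unsuccessful attempt, but each of its three stages contains an unproved step, and the middle one is essentially the conjecture restated.
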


\begin{conj} \label{conj:thm}
	Let $\mcalm$ and $A$ be as in Conjecture~\ref{conj:ah_BhautA}. Then for any manifold $M$,
	\begin{enumerate}
		\item[(i)] if $k = 1$, there is a bijection $\mcalf_{1A}(\om; \mcalm)\slash \textgoth{w} \ \cong \big[M, \emph{Bhaut} A\big]$,
		\item[(ii)] if $k \geq 2$ and $\mcalm$ has a zero object, there is a bijection $\fkam \slash \textgoth{w} \ \cong \big[F_k(M), \emph{Bhaut} A\big]$. 
	\end{enumerate}
\end{conj}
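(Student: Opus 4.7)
The plan is to reduce Conjecture~\ref{conj:thm} to Conjecture~\ref{conj:ah_BhautA}: once we know $\ah \wequ \textup{Bhaut}A$, the bijections $[M,\ah] \cong [M,\textup{Bhaut}A]$ and $[F_k(M),\ah] \cong [F_k(M),\textup{Bhaut}A]$ are automatic, so combining this with Theorem~\ref{main_thm}-(i) and (ii) yields both parts immediately. The real content is therefore to prove the homotopy equivalence $\ah \wequ \textup{Bhaut}A$.

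For this, I would introduce an intermediate simplicial set $\mcaln$ and construct a zigzag $\ahd \wequ \mcaln \wequ \textup{Bhaut}A$. A natural candidate for $\mcaln$ is the nerve of the subcategory $\ca^{we} \subseteq \ca$ consisting of the same objects with only weak equivalences as morphisms; its $n$-simplices are chains $X_0 \stackrel{\sim}{\la} \cdots \stackrel{\sim}{\la} X_n$. By Proposition~\ref{ca_prop}, every object of $\ca$ is fibrant-cofibrant and weakly equivalent to $A$, so $\ca^{we}$ forms a single Dwyer-Kan classification component. Standard classifying-space theory for simplicial monoids (\`a la May-Segal) then gives $|\mcaln| \wequ \textup{Bhaut}A$; this is the easier leg of the zigzag.

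For the comparison $\ahd \wequ \mcaln$, the inclusion $[n] \hookrightarrow \dtn$ sending $i$ to $\{0,1,\ldots,i\}$ induces a restriction map $\rho \colon \ahd \lra \mcaln$. I would show $\rho$ is a Kan fibration with contractible fibers by invoking Proposition~\ref{we_prop} and variants for subposets of $\dtn$: given a chain in $\mcaln$, extending it to a fibrant $\dtn$-diagram of weak equivalences is controlled by iterated acyclic fibrations onto matching objects, and the extension data form contractible spaces. Hence $\rho$ would be a weak equivalence.

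The main obstacle will be compatibility of $\rho$ with the intricate degeneracies of $\ahd$ defined in Section~\ref{sj_subsubsection} via the fibrant replacement functor $\mcalr$ and the menorah construction. One must verify that after applying $\mcalr$ and replacing the top vertex by $R\underset{\Uppsi}{\textup{colim}}\,\mathbb{D}$, restriction along the chain still agrees with the naive degeneracy on $\mcaln$; this amounts to checking that the acyclic cofibrations $\phi^{n+1}_{r_0 \cdots r_k}$ from (\ref{acy_eqn}) and the map $\varphi$ used in the definition of $s_j$ restrict to weak equivalences along $[n] \hookrightarrow \dtn$, and this is where most of the technical bookkeeping will live. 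Once this compatibility is established, the chain $|\ahd| \wequ |\mcaln| \wequ \textup{Bhaut}A$ proves Conjecture~\ref{conj:ah_BhautA}, and Conjecture~\ref{conj:thm} follows as explained above.
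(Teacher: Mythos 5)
You should first note that the statement you are trying to prove is stated in the paper as a conjecture, not a theorem: the authors say explicitly that they wanted to prove it and could not, and the reduction you begin with --- establish $\ah \wequ \textup{Bhaut}A$ (Conjecture~\ref{conj:ah_BhautA}) and then quote Theorem~\ref{main_thm} --- is precisely the route the paper itself records as ``one way to get the latter.'' So the first paragraph of your proposal reproduces the paper's own remark, and the actual content lies in your sketch of Conjecture~\ref{conj:ah_BhautA}, which has genuine gaps rather than mere bookkeeping.

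Concretely: (1) the ``easier leg'' $|\mcaln| \wequ \textup{Bhaut}A$ is not standard for $\mcaln = N(\ca^{we})$. The Dwyer--Kan result the paper invokes concerns the \emph{special classification complex} $\ca'$, consisting of \emph{all} objects of $\mcalm$ weakly equivalent to $A$, and identifies its simplicially enriched nerve $d\Nt\ca'$ with $\textup{Bhaut}A$; passing from the enriched nerve to the discrete nerve is exactly the comparison map $\psi_*$ that the authors state they do not know how to show is a bijection, and in addition your $\ca$ is a small category produced by a specific inductive construction, so you would further need a homotopy-cofinality argument that $N(\ca^{we}) \to N(\ca')$ is a weak equivalence. (2) Your restriction map $\rho \colon \ahd \to \mcaln$ along $i \mapsto \{0,\dots,i\}$ is compatible with faces but not obviously with degeneracies: $s_j$ on $\ahd$ is not $\sigma \mapsto \sigma s^j$ but involves the fibrant replacement $\mcalr$ and the substitution of the top vertex by $R\underset{\Uppsi}{\textup{colim}}\,\mathbb{D}$, so $\rho$ need not even be a simplicial map; you flag this as the main obstacle, but it is an unresolved comparison problem of the same nature as the maps $\psi_*$ and $\varphi_*$ that blocked the authors' alternative approach. (3) The claim that $\rho$ is a Kan fibration with contractible fibers is asserted, not argued: Proposition~\ref{we_prop} controls limits of fibrant diagrams of weak equivalences over $\lnk$, but it does not give contractibility of the space of extensions of a chain of weak equivalences to a fibrant $\dtn$-diagram, and that contractibility is essentially the whole content of Conjecture~\ref{conj:ah_BhautA}. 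In short, your plan is a reasonable strategy, consistent with the authors' own suggested reduction, but it does not close the conjecture, which remains open in the paper.
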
 

One way to get the latter is to prove Conjecture~\ref{conj:ah_BhautA} and then use our Theorem~\ref{main_thm}. When $\mcalm$ is the category of spaces, we have that B$\haut A$ classifies fibrations with fiber homotopy equivalent to $A$ (see \cite[Corollary 9.5]{may75}). 

We wanted to prove Conjecture~\ref{conj:thm} and state it as the main result of this paper, but we couldn't find a way to do it.  We tried another interesting approach, which does not involve $\ah$ at all, that we now explain.

\subsection{Another attempt to demonstrate Conjecture~\ref{conj:thm}}

We discuss the case $k=1$; the cases $k \geq 2$ can be treated similarly.  In the introduction we provided the proof of the first part of Theorem~\ref{main_thm} that can be summarized as
\[
\mcalf_{1A}(\om; \mcalm)\slash \textgoth{w} \ \cong \ \mcalf(\utm; \ca)\slash \textgoth{w} \ \cong \big[ M, \ah\big]. 
\]
For the purposes of the new approach, we need to replace $\ca$ with a larger category $\ca'$ defined as follows. The objects of $\ca'$ are the objects of $\mcalm$ which are  connected to $A$ by a zigzag of weak equivalences. The morphisms of $\ca'$ are the weak equivalences between its objects.  By definition $\ca'$ is connected. Note that the category $\ca'$ is nothing but what Dwyer and Kan  call \textit{special classification complex} of $A$ (see \cite[Section 2.2]{dwyer_kan84} where they use the notation $scA$ instead).  The usefulness of $\ca'$ is due to the fact that its (enriched) nerve (which is denoted below by $d\Nt \ca'$) has the homotopy type of $\text{Bhaut}A$ \cite[Proposition 2.3]{dwyer_kan84}, and depends only on the weak equivalence class of $A$.

Using the same approach as that we used to prove Theorem~\ref{main_thm}, one can show that there is a bijection
\[
\mcalf_{1A}(\om; \mcalm)\slash \textgoth{w} \ \cong \ \mcalf(\utm; \ca')\slash \textgoth{w}.
\]
This makes sense if and only if $\mcalf_{1A}(\om; \mcalm)$ is viewed as the category of  homogeneous functors of degree $1$ whose morphisms are natural transformations which are (objectwise) weak equivalences.  The next step is to try to see whether $\mcalf(\utm; \ca')\slash \textgoth{w}$ is in bijection with $[M, \text{Bhaut}A]$. For this, consider the diagram

\[
\xymatrix{\mcalf(\utm; \ca')\slash \textgoth{w} \ar[r]^-{\widetilde{N}}_-{\cong}  &  \underset{\text{s}^2\text{Set}}{\text{Hom}} (\widetilde{N}\utm, \widetilde{N}\ca')/\simeq  &  \underset{\text{sSet}}{\text{Hom}} (N\utm, N\ca')/\simeq \ar[l]_-{\alpha}^-{\cong} \ar[d]^-{\psi_*} \\
	\big[ |N\utm|, |\mcalr d\widetilde{N}\ca'|\big] &  \underset{\text{sSet}}{\text{Hom}} (N\utm, \mcalr d\widetilde{N}\ca')/\sim \ar[l]_-{|-|}^-{\cong}  &  \underset{\text{sSet}}{\text{Hom}} (d\widetilde{N}\utm, d\widetilde{N}\ca')/\simeq \ar[l]_-{\varphi_*}  \\
	\big[ M, |d\widetilde{N}\ca'|\big] \ar[u]^-{|\varphi|_*}_-{\cong} \ \ \  \  \  \  \ \   \cong & \! \! \!  \! \! \! \! \! \! \!  \! \! \! \! \! \! \!  \! \! \! \! \! \! \!  \! \! \! \! \! \! \!  \! \! \! \! \! \! \!  \! \! \! \! \!  \big[ M, \text{Bhaut} A\big], }
\]

where
\begin{enumerate}
	\item[$\bullet$] $\utm$ is viewed as a simplicially enriched category with the constant simplicial set at $\underset{\utm}{\text{Hom}} (u, v)$ for every $u, v \in \utm$. The simplicial enrichment of $\ca' \subseteq \mcalm$ is of course induced by that of $\mcalm$. Note that since the source, $\utm$, is discrete,  we have that every functor  $F \colon \utm \to \ca'$ is a simplicial functor in the sense that $F$ respects the simplicial enrichment;
	\item[$\bullet$] $\text{s}^2\text{Set}$ is the category of bisimplicial sets, and $\widetilde{N}$ is the bisimplicial nerve functor defined as follows. For a simplicially enriched category $\mcala$, $\widetilde{N} \mcala$ is the simplicial object in simplicial sets whose $k$-simplices are given by 
	\[
	(\widetilde{N} \mcala)_k = \coprod_{(a_0, \cdots, a_k) \in \mcala^{k+1}} \prod_{i=0}^{k-1} \underset{\mcala}{\textbf{Hom}} (a_i, a_{i+1}),
	\]
	where $\underset{\mcala}{\textbf{Hom}} (-, -)$ stands for the simplicial hom-set functor. It turns out that $\Nt$ is fully faithful.  The equivalence relation ``$\simeq$'' that appears in $\underset{\text{s}^2\text{Set}}{\text{Hom}} (\widetilde{N}\utm, \widetilde{N}\ca')/\simeq$ is the one generated by homotopies. That is, two bisimplicial maps $f, g \colon \Nt \utm \to \Nt \ca'$ are in relation with respect to $\simeq$ if they are connected by a zigzag $f \leftarrow f_1 \rightarrow \cdots \leftarrow f_n \rightarrow g$ where $f_i \to f_j$ means that there is a homotopy from $f_i \to f_j$;
	\item[$\bullet$] $N$ is the ordinary/discrete nerve functor, sSet is the category of simplicial sets as usual, and the equivalence relation ``$\simeq$'' that appears in $\underset{\text{sSet}}{\text{Hom}}(N\utm, N\ca')/\simeq$ is generated by homotopies in the same sense as before. The isomorphism $\alpha$ is defined in the standard way. 
	\item[$\bullet$] $d$ is the diagonal functor $d \colon \text{s}^2\text{Set} \to \text{sSet}$, and $\psi_*$ is the map induced by the canonical map $\psi \colon N\ca' \to d\Nt \ca'$. Note that by definition $d\Nt\utm = N\utm$. 
	\item[$\bullet$] $\mcalr$ is a fibrant replacement functor $\mcalr \colon \text{sSet} \to \text{sSet}$, $\sim$ is the usual homotopy relation, and $\varphi_*$ is the map induced by the fibrant replacement $\varphi: \xymatrix{d\Nt\ca' \  \ar@{>->}[r]^-{\sim} & \mcalr d\Nt\ca'}$; 
	\item[$\bullet$] $|-|$ is of course the geometric realization functor, and $|\varphi|_*$ is the map induced by $|\varphi|$. Note that $|N\utm|$ is homeomorphic to $M$;
	\item[$\bullet$] The final bijection comes from  the fact that $|d\Nt\ca'| \simeq \text{Bhaut}A$  as mentioned above. 
\end{enumerate} 

This is another  candidate  for a possible proof of Conjecture~\ref{conj:thm}, but the problem here is that we do not know how to show that  $\psi_*$ and $\varphi_*$ are both isomorphisms.

\cleardoublepage
\addcontentsline{toc}{section}{References}

\textsf{University of British Columbia -- Okanagan, 3333 University Way, Kelowna, BC V1V 1V7, Canada\\
Department of Computer Science, Mathematics, Physics and Statistics\\}
\textit{E-mail address: paul.tsopmene@ubc.ca}

\textsf{University of Regina, 3737 Wascana Pkwy, Regina, SK S4S 0A2, Canada\\
	Department of Mathematics and Statistics\\}
\textit{E-mail address: donald.stanley@uregina.ca}

\end{document}